\documentclass[12pt]{amsart}
\usepackage{amssymb,amsmath,amsthm}
\usepackage{mathrsfs}  
\setlength{\textwidth}{\paperwidth} 
\setlength{\textheight}{\paperheight} 
\addtolength{\textwidth}{-2.5in}
\addtolength{\textheight}{-2.5in}
\calclayout
\usepackage{graphicx}
\usepackage[all,cmtip]{xy}
\usepackage{tikz-cd}
\usepackage{adjustbox}
\newcommand\scalemath[2]{\scalebox{#1}{\mbox{\ensuremath{\displaystyle #2}}}}
\usepackage{hyperref}
\usepackage[msc-links, backrefs]{amsrefs}
\hypersetup{
  colorlinks   = true,	
  urlcolor     = blue,	
  linkcolor    = blue,	
  citecolor   = red	
}
\newtheorem{theorem}{Theorem}[section]
\newtheorem{proposition}[theorem]{Proposition}
\newtheorem{corollary}[theorem]{Corollary}
\newtheorem{lemma}[theorem]{Lemma}

\newtheorem{remark}[theorem]{Remark}
\numberwithin{theorem}{section} \numberwithin{equation}{section}
\newcommand{\Beq}{\begin{equation}}
\newcommand{\Eeq}{\end{equation}}
\newcommand{\Beqn}{\begin{equation*}}
\newcommand{\Eeqn}{\end{equation*}}
\newcommand{\beq}{\begin{small} \begin{equation}}
\newcommand{\eeq}{\end{equation} \end{small}}
\newcommand{\beqn}{\begin{small} \begin{equation*}}
\newcommand{\eeqn}{\end{equation*} \end{small}}
\begin{document}
\title[Prym varieties for special bielliptic curves]{Geometry of Prym varieties for certain bielliptic curves of genus three and five}
\author{Adrian Clingher}
\address{Dept.\!~of Mathematics \& Statistics, University of Missouri - St. Louis, MO 63121}
\email{clinghera@umsl.edu}
\thanks{A.C. acknowledges support from a UMSL Mid-Career Research Grant.}
\author{Andreas Malmendier}
\address{Dept.\!~of Mathematics, University of Connecticut, Storrs, Connecticut 06269}
\email{andreas.malmendier@uconn.edu}
\thanks{A.M. acknowledges support from the Simons Foundation through grant no.~202367.}
\author{Tony Shaska}
\address{Dept.~of Mathematics \& Statistics,  Oakland University, Rochester, MI 48309}
\email{shaska@oakland.edu}
\begin{abstract}
We construct two pencils of bielliptic curves of genus three and genus five. The first pencil is associated with a general abelian surface with a polarization of type $(1,2)$. The second pencil is related to the first by an unramified double cover, the Prym variety of which is canonically isomorphic to the Jacobian of a very general curve of genus two. Our results are obtained by analyzing suitable elliptic fibrations on the associated Kummer surfaces and rational double covers among them.
\end{abstract}
\keywords{Kummer surfaces, Prym varieties, isogenies of abelian surfaces}
\subjclass[2020]{14H40, 14J28}
\maketitle

\section{Introduction and statement of results}
\label{ssec:thms}
Computing isogenies between Jacobian and Prym varieties for curves of genus two and three is considered one of the fundamental problems in the context of computer algebra and encryption as it is closely related to the arithmetic and the discrete logarithm problem in class groups of such curves and Recillas’  \emph{trigonal construction} \cites{MR2406115, MR1736231, MR3389883, MR4063320}.  If the curve of genus three is non-hyperelliptic, there has been no general formula relating its moduli to the moduli of a curve of genus two. In this article, we will derive explicit normal forms for the pencil of plane, bielliptic curves of genus three (and their unramified double coverings by canonical curves of genus five) such that the Prym variety of its general member is 2-isogenous to the Jacobian of a very general curve of genus two. We emphasize that our results are valid for any curve in the moduli space of curves of genus two, not only for special elements or subfamilies.

Let  $\mathcal{C}$ be a smooth curve of genus two defined over the field of complex numbers. Consider a G\"opel subgroup $G' \leqslant \operatorname{Jac}(\mathcal{C})[2]$, i.e., a subgroup maximally isotropic under the Weil pairing.  It is then well known that the quotient $\operatorname{Jac}(\mathcal{C})/G'$ is canonically isomorphic to the Jacobian of a second curve of genus two $\mathcal{C}'$, said to be $(2,2)$-isogenous with $\mathcal{C}$. Moreover, the image of $\operatorname{Jac}(\mathcal{C})[2]$ under the projection map $\Psi' \colon  \operatorname{Jac}(\mathcal{C}) \rightarrow \operatorname{Jac}(\mathcal{C})/G' $ is a G\"opel subgroup of $\operatorname{Jac}(\mathcal{C}')[2]$ and, as $\operatorname{Jac}(\mathcal{C})/ \operatorname{Jac}(\mathcal{C})[2] \simeq  \operatorname{Jac}(\mathcal{C})$, 
one obtains a pair of dual $(2,2)$-isogenies: 
\beq
\xymatrix 
{
 \operatorname{Jac}(\mathcal{C}) 
\ar @/_0.5pc/ @{->} _{\Psi'} [rr] &
&  \operatorname{Jac}(\mathcal{C}')  
\ar @/_0.5pc/ @{->} _{\Psi} [ll] \\
} 
\eeq
The relation between the curves $\mathcal{C}$ and $\mathcal{C'}$ can be made explicit via the Richelot construction \cites{MR1578134, MR1578135}.
\par Consider $G' \simeq \langle \mathscr{L} \rangle \oplus  \langle \mathscr{L}' \rangle$ a marking of the G{\"o}pel subgroup above, with $\mathscr{L}$, $ \mathscr{L}'$ line bundles of order two on the curve $\mathcal{C}$. The line bundle $\mathscr{L}$ determines a canonical {\'e}tale double cover $p \colon \mathcal{H} \rightarrow \mathcal{C}$, with the total space $\mathcal{H}$ being a smooth curve of genus three carrying a base-point free involution $\imath \colon \mathcal{H} \rightarrow \mathcal{H}$. The hyperelliptic involution of $\mathcal{C}$ lifts to a second involution $\jmath \colon \mathcal{H} \rightarrow \mathcal{H}$ that has a fixed locus given by four points. In turn, the involution $\jmath$ defines a canonical bielliptic structure on $\mathcal{H}$, with double cover $\pi \colon \mathcal{H} \rightarrow {\mathcal E}$ mapping to an elliptic curve. The two involutions $\imath$ and $\jmath$ commute, with their composition $\imath \circ \jmath$ defining a hyperelliptic structure on $\mathcal{H}$.  
Also, $\operatorname{Prym}( \mathcal{H}, \pi \colon  \mathcal{H} \rightarrow \mathcal{E})$ is an abelian surface with the curve of genus three $\mathcal{H}$ canonically embedded as a $(1,2)$-polarization \cites{MR0379510, MR572974, MR946234}.
\par Moving up one level, the pull-back $p^* \mathscr{L}'$ is a line-bundle of order-two on the curve $\mathcal{H}$. As such, it defines an {\'e}tale double cover $p' \colon \mathcal{F} \rightarrow \mathcal{H}$, with the total space $\mathcal{F}$ given by a smooth curve of genus five, carrying a base-point free involution $\imath' \colon \mathcal{F} \rightarrow \mathcal{F}$. The bielliptic involution $\jmath$ on $\mathcal{H}$ lifts to an involution $\jmath' \colon \mathcal{F} \rightarrow \mathcal{F}$ with eight fixed points, defining a second bielliptic structure $\pi'  \colon \mathcal{F} \rightarrow \mathcal{E}'$, with $\mathcal{E}'$ an elliptic curve that is 2-isogenous to $\mathcal{E}$. 
\beq
\label{d1}
\begin{tikzcd}[row sep=large, column sep=large]
{\mathcal F} 
 \arrow[rr, "p'"]  
 \arrow[d, "\pi'" ']
 \arrow[loop, out=120, in=60, looseness=5,  "j'"] 
&
& \ {\mathcal H} 
 \arrow[rr, "p"]  
 \arrow[d, "\pi"]
 \arrow[loop, out=120, in=60, looseness=5,  "j"] 
 & &  \ \mathcal{C}
\\
\mathcal{E}'
\arrow[rr, "\rm{2-isogeny}"]  
&  & \ \mathcal{E}  
& &  
\end{tikzcd}
\eeq
One has, in this context, a canonical isomorphism $\operatorname{Prym}(\mathcal{F}, p' \colon  \mathcal{F} \rightarrow \mathcal{H}) \cong \operatorname{Jac}(\mathcal{C}')$.
\par Next, we note that the left half of diagram $(\ref{d1})$ is actually a fiber in a one-dimensional family. In order to see this, consider the embedding $\mathcal{C} \hookrightarrow \operatorname{Jac}(\mathcal{C})$, given by a choice of Abel-Jacobi map. The theta divisor $\Theta = [\mathcal{C}]$ gives a principal polarization $\mathscr{U} = \mathcal{O}_{\mathsf{A}}(\Theta)$ on  $\mathsf{A}=\operatorname{Jac}(\mathcal{C})$, which, in turn, establishes a canonical isomorphism $\operatorname{Jac}(\mathcal{C})  \cong  \operatorname{Jac}(\mathcal{C}) ^{\vee}$. Hence, points of order two in $\operatorname{Jac}(\mathcal{C})$ may be viewed as line bundles of order two on $\operatorname{Jac}(\mathcal{C})$. One can then repeat the construction from above, in the context of the Jacobian variety $\operatorname{Jac}(\mathcal{C})$. 
\par First, $\mathscr{L}$ determines a 2-isogeny of abelian surfaces $\Phi \colon \mathsf{B} \rightarrow  \operatorname{Jac}(\mathcal{C})$. The abelian surface $\mathsf{B}$ carries a canonical $(1,2)$-polarization $\mathscr{V} =  \Phi^*(\mathscr{L})$ with $\mathscr{V}^2=4$ and $h^0(\mathscr{V})=2$. The effective divisors for $\mathscr{V}$ form a pencil with four fixed points. Following the work in \cites{MR946234, MR2729013}, a general member of this pencil is, in the generic case, a smooth curve of genus three $\mathcal{D}_t \subset \mathsf{B}$.  The antipodal involution of $\mathsf{B}$ restricts as a bielliptic involution on $\mathcal{D}_t$, the quotient by which gives a double cover $ \pi_t  \colon \mathcal{D}_t \rightarrow \mathcal{E}_t $ mapping on an elliptic curve $\mathcal{E}_t$. One has a canonical isomorphism of abelian surfaces $\operatorname{Prym}( \mathcal{D}_t ,  \pi_t \colon  \mathcal{D}_t \rightarrow \mathcal{E}_t ) \cong \mathsf{B}$. 
\par Second, the pull-back $\Phi^*(\mathscr{L}')$ is a line bundle of order two on $\mathsf{B}$ and, hence, it determines a 2-isogeny $\Phi' \colon \operatorname{Jac}(\mathcal{C}') \rightarrow \mathsf{B}$. The preimage, under $\Phi'$, of each smooth curve $\mathcal{D}_t$ is a smooth curve of genus five $\mathcal{F}_t \subset \operatorname{Jac}(\mathcal{C}') $. As before, the antipodal involution on $\operatorname{Jac}(\mathcal{C}')$ restricts to a bielliptic involution on $\mathcal{F}_t$, leading to a bielliptic structure $\pi'_t  \colon \mathcal{F}_t \rightarrow \mathcal{E}'_t$. 
\beq
\label{d2}
\begin{tikzcd}[row sep=large, column sep=large]
\operatorname{Jac}({\mathcal C}') 
 \arrow[rr, "\Phi'"]  
 \arrow[loop, out=120, in=60, looseness=5,  "-{\rm id}"] 
& & 
\mathsf{B}  
 \arrow[rr, "\Phi"]  
 \arrow[loop, out=120, in=60, looseness=5,  "-{\rm id}"] 
& &  
\operatorname{Jac}(\mathcal{C}) 
\\
{\mathcal F}_t 
\arrow[d, "\pi'_t"]
\arrow[u, hook]
 \arrow[rr, "\rho'_t= \Phi' \vert_{\mathcal{F}_t}"]  
& & 
{\mathcal D}_t 
\arrow[d, "\pi_t"]
\arrow[u, hook]
\\
\mathcal{E}'_t 
\arrow[rr, "\rm{2-isogeny}"]  
& &  \mathcal{E}_t  & &  
\end{tikzcd}
\eeq
The Prym  variety $ \operatorname{Prym}(  \mathcal{F}_t, \rho'_t\colon  \mathcal{F}_t \rightarrow \mathcal{D}_t )$ arises naturally in the above picture, as isomorphic to the Jacobian $\operatorname{Jac}(\mathcal{C}')$. 
\par We note that the curve family $ \mathcal{F}_t$ belongs to the linear system associated with the line bundle $\Phi^{\prime *} \mathscr{V} $, which is of type $(2,2)$ and twice a principal polarization on $\operatorname{Jac}(\mathcal{C}')$.  One has $h^0(\Phi^{\prime *} \mathscr{V})=4$. The family $ \mathcal{F}_t$ is parametrized by a conic curve, within the three-dimensional projective space $\vert \Phi^{\prime *} \mathscr{V} \vert$.
\medskip
\par The goal of this paper is to give an explicit description for the pencils of curves $\mathcal{D}_t$ and $\mathcal{F}_t$. The building block for the entire construction above is simply a choice of a smooth curve of genus two $\mathcal{C}$, as well as a choice of a G\"opel subgroup of $G' \leqslant \operatorname{Jac}(\mathcal{C})[2]$. We shall start with such a curve given explicitly in Rosenhain normal form as
\Beq
\label{eqn:Rosenhain_intro}
 \mathcal{C}: \quad \eta^2 = \xi\,  \big(\xi-1) \, \big(\xi- \lambda_1\big) \,  \big(\xi- \lambda_2 \big) \,  \big(\xi- \lambda_3\big) \,,
\Eeq
such that the ordered tuple $(\lambda_1, \lambda_2, \lambda_3)$ -- with $\lambda_i$ pairwise distinct and different from $(\lambda_4,\lambda_5,\lambda_6)=(0, 1, \infty)$ -- determines a point in the moduli space $\mathfrak{M}$ of curves of genus two with marked level-two structure. A choice of G\"opel subgroup is then equivalent to a choice of $2+2+2$ partition of the six canonical branch points $ \{ \lambda_1, \lambda_2, \lambda_3, \lambda_4, \lambda_5, \lambda_6 \}$. The three Rosenhain $\lambda$-parameters can be expressed as explicit ratios of even Siegel theta constants by Picard's lemma. There are $720$ choices for such expressions: for example, one might use the choice from \cites{MR0141643, MR2367218,MR3712162} to obtain
\Beq
\label{eqn:theta}
\lambda_1 = \frac{\theta_1^2\theta_3^2}{\theta_2^2\theta_4^2} \,, \quad \lambda_2 = \frac{\theta_3^2\theta_8^2}{\theta_4^2\theta_{10}^2}\,, \quad \lambda_3 =
\frac{\theta_1^2\theta_8^2}{\theta_2^2\theta_{10}^2}\,.
\Eeq
We consider the double cover $\mathfrak{M}'$ of $\mathfrak{M}$ given as the set of tuples $(\kappa_{1,5}, \lambda_2, \lambda_3)$ such that $(\lambda_1=\kappa_{1,5}^2,\lambda_2,\lambda_3) \in \mathfrak{M}$.  There is a good reason for the notation $\kappa_{1,5}$, and the reason for it will become apparent later.  For the moment, we only mention that $\kappa_{1,5}$ can be considered a section of a suitable line bundle over $\mathfrak{M}$. We introduce the homogeneous polynomials
\beq
\label{eqn:data_intro}
\begin{split}
  \Delta^{(t)}(X,Y) & = \big(X - t \, Y\big)^2 \,,\\
  r^{(t)}(X, Y)	& = 6\lambda_1 \lambda_2 \lambda_3 \, t^2 X^2  - \big(\lambda_1+ \lambda_2 \lambda_3\big) \big( X^2 +4 t XY + t^2 Y^2)   + 6 Y^2 \,,\\
  r_1^{(t)}(X, Y) 	& = 24   \lambda_1 \lambda_2 \lambda_3 \big( \lambda_1 + \lambda_2\lambda_3 \big) t^2 X^2  + 2 \big( \lambda_1 -5 \lambda_2\lambda_3\big) \big( 5\lambda_1 - \lambda_2\lambda_3\big)  t X Y\\
  & + \big( \lambda_1^2 + \lambda_2^2 \lambda_3^2 -34 \lambda_1 \lambda_2 \lambda_3\big) \big(X^2+ t^2 Y^2\big) + 24  \big( \lambda_1 + \lambda_2\lambda_3 \big) Y^2\,,\\
    p(X,Y)  		& = \big(\lambda_1 X^2-Y^2 \big) \big( \lambda_2 \lambda_3 \, X^2 - Y^2 \big)  \,,
\end{split}  
\eeq
and the parameters $p_0^{(t)}=p(t,1)$ and
\beq
\label{eqn:moduli_gd_intro}
\begin{split}
c_0 & = \, 2  \big( \lambda_1 -5 \lambda_2\lambda_3\big) \big( 5\lambda_1 - \lambda_2\lambda_3\big) \,  \kappa_{1,5} + \lambda_1^3 + \lambda_2^2\lambda_3^2 \\
& \, - \lambda_1^2 \big( 34 \lambda_2 \lambda_3 - 24 (\lambda_2 + \lambda_3) -1 \big) + \lambda_1\lambda_2\lambda_3 \big(\lambda_2\lambda_3 + 24(\lambda_2+ \lambda_3) - 34\big) \,,\\
c_1 & = \,8 \big (\lambda_1 + \lambda_2 \lambda_3 \big) \, \kappa_{1,5} - 2 \big( 6( \lambda_2 +\lambda_3) - \lambda_2 \lambda_3-1\big) \lambda_1 + 2 \big(\lambda_1^2 +  \lambda_2 \lambda_3 \big)\,,\\
c_2 & = \, \lambda_1 + 1 - 2 \, \kappa_{1,5} \,.
\end{split}
\eeq 
We note that $c_2=0$ implies $\lambda_1=1$ and $\mathcal{C}$ is singular.  
\par Let $\mathcal{D}_{t}$ be the pencil  of plane quartic curves in $\mathbb{P}^2=\mathbb{P}(X,Y,Z)$ given by 
\Beq
\label{eqn:genus-three_intro}
 \mathcal{D}_t: \quad p_0^{(t)}  Z^4 +  \Big(c_2 \,  r_1^{(t)} +  c_1 \,  r^{(t)} + c_0 \, \Delta^{(t)} \Big)\, Z^2 + 9 \, \Big(c_1^2- 4 \, c_0c_2\Big)  \, p =0 \,,
\Eeq
with the involution 
\Beq
 \jmath: \quad [X: Y :Z ] \mapsto [X: Y: -Z] \,,
 \Eeq
 and the degree-two quotient map $\pi_t: \mathcal{D}_t \to \mathcal{Q}_t =\mathcal{D}_t/\langle \jmath \rangle$.  We have the following:
\begin{theorem}
\label{main1}
The pencil in Equation~(\ref{eqn:genus-three_intro}) satisfies the following:
\begin{enumerate}
\item for generic $t$, the curve $\mathcal{D}_t$ is a smooth, bielliptic curve of genus three such that the Prym variety $\operatorname{Prym}(\mathcal{D}_t, \pi_t)$ with its natural polarization of type $(1,2)$ is 2-isogenous to the principally polarized Jacobian variety $\operatorname{Jac}(\mathcal{C})$, i.e.,
\Beqn
 \operatorname{Prym}(\mathcal{D}_t, \pi_t)\  \simeq\  \operatorname{Jac}(\mathcal{C}) \,,
\Eeqn
and $\mathcal{D}_t$ embeds into $\operatorname{Prym}(\mathcal{D}_t, \pi_t)$ as a curve of self-intersection four. 
\item for $t^2 = \lambda_1, \lambda_2\lambda_3$, the curve $\mathcal{D}_t$ is a reducible nodal curve isomorphic to $\mathbb{P}^1 \cup \mathcal{C}'$ where $\mathcal{C}'$ is a $(2,2)$-isogenous, smooth curve of genus two such that
\Beqn
 \operatorname{Jac}(\mathcal{C}')\ = \ \operatorname{Jac}(\mathcal{C})/G' \,,
\Eeqn 
where $G' \subset \operatorname{Jac}(\mathcal{C})[2]$  is the G\"opel group associated with the pairing of the Weierstrass points of $\mathcal{C}$ given by $\{ \lambda_1,\lambda_5=1 \}$, $\{ \lambda_2,\lambda_3\}$, $\{\lambda_4=0,\lambda_6=\infty \}$,
\item for $t^2 = \lambda_2, \lambda_1\lambda_3$, and $t^2 = \lambda_3, \lambda_1\lambda_2$, the curve $\mathcal{D}_t$ is a singular, irreducible curve of geometric genus two with one node,
\item for $t^2 = 0, \pm \lambda_1 \lambda_2 \lambda_3, \infty$, the curve $\mathcal{D}_t$ is smooth and hyperelliptic.
\end{enumerate}
\end{theorem}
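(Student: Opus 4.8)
The plan is to verify the geometric claims by tracing the chain of isogenies and covers back to explicit coordinates on the Kummer surface of $\operatorname{Jac}(\mathcal{C})$, and then checking the degeneration behavior by direct analysis of the defining equation \eqref{eqn:genus-three_intro}. First I would set up the abelian surface $\mathsf{B}$ with its $(1,2)$-polarization $\mathscr{V} = \Phi^*(\mathscr{L})$ as described in the introduction, and realize the pencil $|\mathscr{V}|$ of genus-three curves $\mathcal{D}_t \subset \mathsf{B}$ on the associated Kummer surface $\operatorname{Kum}(\mathsf{B})$, which carries an elliptic fibration whose fibers are (generically) the images of the $\mathcal{D}_t$ under the antipodal quotient. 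The key is to identify $\mathcal{Q}_t = \mathcal{D}_t/\langle\jmath\rangle$ with a curve in a Weierstrass-type model over $\mathbb{P}^1$ and to read off the polynomials $\Delta^{(t)}, r^{(t)}, r_1^{(t)}, p$ and the constants $c_0, c_1, c_2$ as the coefficients arising from pushing the theta-function coordinates of $\operatorname{Jac}(\mathcal{C})$ (via Picard's lemma and the Rosenhain parameters) through the 2-isogeny $\Phi$. Once the equation \eqref{eqn:genus-three_intro} is recognized as exactly this pushforward, part (1) follows from the general theory of \cites{MR946234, MR2729013}: a general member of $|\mathscr{V}|$ is smooth of genus three, the antipodal involution restricts to a base-point-free-quotient bielliptic involution $\jmath$, and $\operatorname{Prym}(\mathcal{D}_t, \pi_t) \cong \mathsf{B}$, which is by construction 2-isogenous to $\operatorname{Jac}(\mathcal{C})$; the self-intersection $\mathcal{D}_t^2 = \mathscr{V}^2 = 4$ is the stated value.

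For the degenerate fibers I would compute, for each special value of $t^2$, the singular locus of the quartic \eqref{eqn:genus-three_intro} by the usual Jacobian-criterion computation, exploiting the factored form $p(X,Y) = (\lambda_1 X^2 - Y^2)(\lambda_2\lambda_3 X^2 - Y^2)$ and $\Delta^{(t)} = (X - tY)^2$. For part (2), when $t^2 = \lambda_1$ or $t^2 = \lambda_2\lambda_3$, the point $[t:1]$ is a zero of $p$, so $p_0^{(t)} = 0$ and the $Z^4$ term drops out; the curve then visibly contains the conic $Z = 0$ cut with $\{9(c_1^2 - 4c_0c_2)p = 0\}$ — more precisely the line/conic component — and the residual component, after normalization, is a genus-two curve which I would identify with $\mathcal{C}'$ by matching its branch points to the $(2,2)$-isogenous curve determined by the Göpel partition $\{\lambda_1, 1\}, \{\lambda_2, \lambda_3\}, \{0,\infty\}$; the identification $\operatorname{Jac}(\mathcal{C}') = \operatorname{Jac}(\mathcal{C})/G'$ is then forced by the construction of $\Phi$ from $\mathscr{L}$ together with the Richelot/Recillas dictionary recalled in the introduction. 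For part (3), the values $t^2 = \lambda_2, \lambda_1\lambda_3$ and $t^2 = \lambda_3, \lambda_1\lambda_2$ do \emph{not} kill $p_0^{(t)}$ but instead make the discriminant of \eqref{eqn:genus-three_intro} (viewed as a quadratic in $Z^2$) acquire a double root, producing a single node on an otherwise irreducible curve; counting via the genus-degree formula gives geometric genus two. For part (4), at $t^2 \in \{0, \pm\lambda_1\lambda_2\lambda_3, \infty\}$ I expect the coefficient $c_1^2 - 4c_0c_2$ or an analogous symmetric combination to vanish (or the $p$-term to decouple), so that \eqref{eqn:genus-three_intro} degenerates to a curve on which $Z \mapsto -Z$ is no longer merely bielliptic but the composition $\imath\circ\jmath$ becomes a genuine hyperelliptic involution; I would confirm smoothness by the Jacobian criterion and hyperellipticity by exhibiting the degree-two map to $\mathbb{P}^1$.

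The main obstacle, and where most of the work lies, is the first step: showing that the \emph{specific} polynomials and constants in \eqref{eqn:data_intro}–\eqref{eqn:moduli_gd_intro} are precisely what one obtains by pushing forward the linear system $|\mathscr{V}|$ through $\Phi$ in explicit coordinates. This requires choosing compatible coordinates on $\operatorname{Jac}(\mathcal{C})$ (theta functions, via \eqref{eqn:theta}), on $\mathsf{B}$, and on the Kummer surfaces, writing the 2-isogeny $\Phi$ and its action on $|\mathscr{V}|$, and then performing the (lengthy but mechanical) elimination to arrive at a plane model. The appearance of the single square root $\kappa_{1,5}$ with $\kappa_{1,5}^2 = \lambda_1$ — reflecting the double cover $\mathfrak{M}' \to \mathfrak{M}$ — is the subtle point: it signals that the pencil is naturally defined over $\mathfrak{M}'$ rather than $\mathfrak{M}$, and keeping track of which square-root branch corresponds to which of the two dual isogenies $\Psi, \Psi'$ is where sign errors would most easily creep in. I would organize this computation by first treating the Kummer surface elliptic fibration abstractly, identifying $\mathcal{Q}_t$ as its generic fiber, and only then descending to the bielliptic plane model, so that the verification of \eqref{eqn:genus-three_intro} becomes a comparison of two explicit Weierstrass-type equations rather than a from-scratch derivation.
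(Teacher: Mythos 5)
Your overall strategy for parts (1)--(3) is essentially the one the paper follows. For (1) you propose to realize the pencil as $|\mathscr{V}|$ on $\mathsf{B}$ by matching an explicit Weierstrass model on $\operatorname{Kum}(\mathsf{B})$ with the genus-one fibration induced by $\mathcal{Q}_t=\mathcal{D}_t/\langle\jmath\rangle$, and then quote Barth's duality theorem for the Prym isomorphism and the self-intersection four; this is exactly what the paper does (Propositions~\ref{lem:B12}, \ref{prop:pencil}, \ref{prop:main} and Theorem~\ref{thm:Barth}), and you correctly identify the comparison of the two Weierstrass models as the place where all the real work sits. For (2)--(3) your plan (analyze the vanishing of $p_0^{(t)}$, resp.\ the double roots of the discriminant of the quartic viewed as a quadratic in $Z^2$, and identify the genus-two normalization with the Richelot transform) matches Propositions~\ref{prop:normalization} and~\ref{prop:isog_genus2_curve}, where the identification is carried out by comparing Igusa--Clebsch invariants. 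One local slip in (2): when $p_0^{(t)}=0$ the quartic does \emph{not} contain the line $Z=0$ (setting $Z=0$ leaves the nonzero quartic $9(c_1^2-4c_0c_2)p$); what happens is that the residual equation $\big(c_2r_1^{(t)}+c_1r^{(t)}+c_0\Delta^{(t)}\big)Z^2+9(c_1^2-4c_0c_2)p=0$ acquires a node at $[0:0:1]$, and the rational component arises only after passing to the fiber of the pencil on the blown-up surface; the normalization is the genus-two curve.

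Part (4) is where your proposal has a genuine gap. You guess that at $t^2=0,\pm\lambda_1\lambda_2\lambda_3,\infty$ the quantity $c_1^2-4c_0c_2$ (or the $p$-term) vanishes and the curve ``degenerates'' so that a modification of $Z\mapsto -Z$ becomes hyperelliptic. This cannot work: $c_1^2-4c_0c_2=144\kappa_p^2(\lambda_2-1)(\lambda_3-1)(\lambda_2-\lambda_1)(\lambda_3-\lambda_1)$ and $p(X,Y)$ are independent of $t$, so neither can single out special values of $t$, and the theorem asserts these members are \emph{smooth} --- nothing degenerates there, and the bielliptic involution $\jmath$ stays bielliptic (it always has exactly four fixed points on the line $Z=0$). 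The paper's mechanism (Proposition~\ref{prop:hyperelliptic}) is entirely different: the extra involution is a fractional linear transformation $T$ in $[X:Y]$ that permutes the four fixed points of $\jmath$ in pairs and lifts to an involution $k$ of $\mathcal{D}_t$ commuting with $\jmath$; such a lift exists precisely when $t$ is a root of the degree-six ``commutator'' polynomial $[P,Q]_{t}$, whose roots are computed to be $t^2=0,\pm\lambda_1\lambda_2\lambda_3,\infty$, and smoothness of those members is a separate check (the resultant of $[P,Q]$ with the discriminant factors, controlled by the condition $2(q_\gamma\pm q_\delta)\neq 0$). Without this idea --- looking for an automorphism of $\mathbb{P}^1_{[X:Y]}$ preserving the branch data rather than for a degeneration of the equation --- your part (4) would not go through.
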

\par Let $\mathcal{F}_{t}$ be the family of non-hyperelliptic curves of genus five given as the intersection of three quadrics in $\mathbb{P}^4=\mathbb{P}(V,W,X,Y,Z)$ with
\Beq
\label{eqn:genus_5_curve_intro}
\mathcal{F}_t: \quad \left\lbrace \begin{array}{lcl} 
V^2 &=& c_2  e^2 \,  \Delta^{(t)}  + 2 c_2 e \,  r^{(t)}  + c_2 \, r_1^{(t)} \,,\\
W^2 &=& c_2 f^2  \,  \Delta^{(t)}  + 2 c_2 f   \, r^{(t)}  + c_2 \, r_1^{(t)} \,, \\
VW & = & 2 \, p_0^{(t)} Z^2 + c_0 \, \Delta^{(t)}  + c_1 \,  r^{(t)}  + c_2 \, r_1^{(t)} \,,
\end{array} \right. 
\Eeq
and the involution
\beq
\label{eqn:genus_5_involution_intro}
 \imath': \mathbb{P}^4 \to \mathbb{P}^4 \,, \quad [V: W: X: Y:Z ] \mapsto [-V: -W: X: Y:Z ] \,.
\eeq
Here, the parameters $e$ and $f$ are determined by $e+f= c_1/c_2$, $e f = c_0/c_2$; interchanging $e$ and $f$ amounts to the changing the sign $\pm \kappa_{1,5}$ or, equivalently, swapping the two sheets of the double cover $\mathfrak{M}' \to \mathfrak{M}$. We have the following:
\begin{theorem}
\label{main2a}
Each smooth curve $\mathcal{D}_t$ admits an unramified double cover $\rho'_t: \mathcal{F}_t \to \mathcal{D}_t$ with $\mathcal{F}_t$ smooth and bielliptic. The Prym variety $\operatorname{Prym}(\mathcal{F}_t,\rho'_t)$ is canonically isomorphic to the Jacobian of a curve of genus two given by
\Beq
\label{eqn:genus-two-dual_intro}
  \eta^2 =  \Big(\xi - 2 \big( \lambda_1+ \lambda_2\lambda_3\big) \Big) \Big( \big( \xi + \lambda_1 + \lambda_2\lambda_3)^2-36 \lambda_1\lambda_2\lambda_3\Big) \Big( c_2 \xi^2 + c_1 \xi + c_0 \Big) \,,
\Eeq
which is isomorphic to $\mathcal{C}'$ in Theorem~\ref{main1}(2), and $\mathcal{F}_t$ embeds into $\operatorname{Prym}(\mathcal{F}_t,\rho'_t)$ as a curve of self-intersection eight. 
\end{theorem}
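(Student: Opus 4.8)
The plan is to first establish the curve-theoretic structure of $\mathcal{F}_t$, then to identify its Prym variety and to produce the explicit Weierstrass model.

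I would begin by checking that the three quadrics in Equation~(\ref{eqn:genus_5_curve_intro}) form a complete intersection, so that $\mathcal{F}_t\subset\mathbb{P}^4$ has degree $8$ and, by adjunction, $\omega_{\mathcal{F}_t}=\mathcal{O}_{\mathbb{P}^4}(2+2+2-5)|_{\mathcal{F}_t}=\mathcal{O}(1)|_{\mathcal{F}_t}$; hence $\mathcal{F}_t$ is connected (being a complete-intersection curve in $\mathbb{P}^4$) and, whenever smooth, is a canonically embedded---in particular non-hyperelliptic---curve of genus $5$. Smoothness is an open condition on $t$, so by semicontinuity it suffices to verify the Jacobian criterion at one value, the finitely many remaining smooth fibres being treated by direct inspection. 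Next I would show that the linear projection $\rho'_t\colon[V:W:X:Y:Z]\mapsto[X:Y:Z]$ maps $\mathcal{F}_t$ onto $\mathcal{D}_t$: imposing $(VW)^2=V^2W^2$ on the three relations of~(\ref{eqn:genus_5_curve_intro}) and simplifying with $e+f=c_1/c_2$, $ef=c_0/c_2$ yields the single relation $(c_1^2-4c_0c_2)\big(\Delta^{(t)}r_1^{(t)}-(r^{(t)})^2\big)=4\,p_0^{(t)}Z^2\big(p_0^{(t)}Z^2+c_0\Delta^{(t)}+c_1r^{(t)}+c_2r_1^{(t)}\big)$, into which substituting the polynomial identity $\Delta^{(t)}r_1^{(t)}-(r^{(t)})^2=-36\,p_0^{(t)}\,p$ (which one checks directly) reproduces exactly Equation~(\ref{eqn:genus-three_intro}). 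Conversely, over a point of $\mathcal{D}_t$ the quantities $V^2$, $W^2$, $VW$ are prescribed and compatible, so $\rho'_t$ is a double cover with deck transformation $\imath'$; it is \emph{unramified} because its would-be branch locus $\{V=W=0\}\cap\mathcal{F}_t$ is the common zero scheme of three conics in $\mathbb{P}(X,Y,Z)$, hence empty for generic $t$. As $\mathcal{F}_t$ is smooth and connected of genus $5$ while $\mathcal{D}_t$ has genus $3$, Riemann--Hurwitz is satisfied and $\rho'_t$ is a connected \'etale double cover, classified by a nonzero class $\eta_t\in\operatorname{Jac}(\mathcal{D}_t)[2]$.

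For the bielliptic structure I would note that the right-hand sides of the $V^2$- and $W^2$-equations involve only $X,Y$ while $2p_0^{(t)}Z^2$ is even in $Z$, so $\jmath'\colon[V:W:X:Y:Z]\mapsto[V:W:X:Y:-Z]$ preserves $\mathcal{F}_t$; its fixed locus $\mathcal{F}_t\cap\{Z=0\}$ is the intersection of three quadrics in $\mathbb{P}^3=\mathbb{P}(V,W,X,Y)$, hence $8$ points by B\'ezout, and Riemann--Hurwitz forces $\mathcal{F}_t/\langle\jmath'\rangle$ to have genus $1$. Thus $\mathcal{F}_t$ is bielliptic with bielliptic involution $\jmath'$. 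Checking in addition that $\imath'$ and $\jmath'$ commute, that $\imath'\jmath'$ is fixed-point free, and that $\rho'_t$ intertwines the bielliptic structures of $\mathcal{F}_t$ and $\mathcal{D}_t$ identifies $\mathcal{F}_t$ with the fibre of the explicit family fitting diagram~(\ref{d2}), so the abstract identifications recorded there apply.

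It then remains to compute $\operatorname{Prym}(\mathcal{F}_t,\rho'_t)$. Since $g(\mathcal{F}_t)=2g(\mathcal{D}_t)-1$, the Prym kernel of the norm map $\operatorname{Jac}(\mathcal{F}_t)\to\operatorname{Jac}(\mathcal{D}_t)$ is a $2$-dimensional abelian variety carrying a principal polarization, so $\operatorname{Prym}(\mathcal{F}_t,\rho'_t)$ is a principally polarized abelian surface. By the abstract construction in the introduction---once the explicit $\mathcal{F}_t$ and $\mathcal{D}_t$ are matched, via Theorem~\ref{main1}, with the curves in diagram~(\ref{d2})---it is canonically isomorphic to $\operatorname{Jac}(\mathcal{C}')=\operatorname{Jac}(\mathcal{C})/G'$, with $\mathcal{C}'$ the $(2,2)$-isogenous curve of Theorem~\ref{main1}(2); alternatively this can be read off the Klein four-group action $\langle\imath',\jmath'\rangle$ on $\mathcal{F}_t$ (quotients $\mathcal{D}_t$, $\mathcal{E}'_t$, $\mathcal{F}_t/\langle\imath'\jmath'\rangle$) together with the corresponding Prym decomposition and Theorem~\ref{main1}(1). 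To obtain the explicit sextic, I would compute the Richelot partner of the Rosenhain curve~(\ref{eqn:Rosenhain_intro}) for the partition $\{\lambda_1,1\}$, $\{\lambda_2,\lambda_3\}$, $\{0,\infty\}$ (after a fractional-linear change sending the branch point $\infty$ to a finite one) and exhibit the M\"obius transformation carrying its six branch points onto $\infty$, $2(\lambda_1+\lambda_2\lambda_3)$, $-(\lambda_1+\lambda_2\lambda_3)\pm6\sqrt{\lambda_1\lambda_2\lambda_3}$, and $-e$, $-f$, i.e.\ onto the branch points of~(\ref{eqn:genus-two-dual_intro}); equivalently, these can be read off the reducible members $t^2=\lambda_1,\lambda_2\lambda_3$ of the pencil $\mathcal{D}_t$ via Theorem~\ref{main1}(2) and the Kummer-surface analysis of the earlier sections. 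The self-intersection assertion is then immediate: $\mathcal{F}_t$ embeds into the abelian surface $\operatorname{Prym}(\mathcal{F}_t,\rho'_t)\cong\operatorname{Jac}(\mathcal{C}')$ via Abel--Jacobi followed by the Prym projection, and adjunction on an abelian surface gives $[\mathcal{F}_t]^2=2g(\mathcal{F}_t)-2=8$. I expect the genuine obstacle to be exactly this last identification at the level of branch points---tracking the canonical isomorphism $\operatorname{Prym}(\mathcal{F}_t,\rho'_t)\cong\operatorname{Jac}(\mathcal{C}')$ through the $(2,2)$-isogeny, with the branch point $\infty$ of the Rosenhain form and the $\kappa_{1,5}=\sqrt{\lambda_1}$-dependence of $c_0,c_1,c_2$---and the cleanest route is via the elliptic fibrations on the Kummer surfaces of $\operatorname{Jac}(\mathcal{C})$ and $\operatorname{Jac}(\mathcal{C}')$ and the rational double covers between them set up earlier in the paper, rather than a bare-hands Richelot computation.
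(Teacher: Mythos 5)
Your curve-theoretic groundwork is sound and in places more self-contained than the paper's: the elimination $(VW)^2=V^2W^2$ together with the identity $\Delta^{(t)}r_1^{(t)}-(r^{(t)})^2=-36\,p_0^{(t)}p$ (which is exactly the rescaled form of Equation~(\ref{eqn:relat})) does recover Equation~(\ref{eqn:genus-three_intro}); the fixed-point count for $\jmath'$ and the genus computations are correct; and adjunction on an abelian surface gives the self-intersection eight \emph{once one knows the Abel--Prym map is an embedding} --- that is not automatic and the paper supplies it via Lemma~\ref{lem:APembedding}, citing Masiewicki and Verra, so it should be cited rather than asserted.

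The genuine gap is the identification $\operatorname{Prym}(\mathcal{F}_t,\rho'_t)\cong\operatorname{Jac}(\mathcal{C}')$ with $\mathcal{C}'$ the \emph{specific} sextic~(\ref{eqn:genus-two-dual_intro}). Your appeal to ``the abstract construction in the introduction'' and diagram~(\ref{d2}) is circular: that diagram is precisely the picture the paper is in the process of establishing, and Theorem~\ref{main1} only controls $\operatorname{Prym}(\mathcal{D}_t,\pi_t)$; it does not tell you which $2$-torsion class on $\mathcal{D}_t$ your cover $\rho'_t$ realizes, nor what the resulting Prym is. Your fallback --- compute the Richelot partner and match branch points by a M\"obius transformation --- would only show that the curve written in~(\ref{eqn:genus-two-dual_intro}) is isomorphic to $\mathcal{C}'$; it says nothing about the Prym of $\rho'_t$. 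The ingredient you are missing is the classical determinantal description of the Prym of an \'etale double cover of a plane quartic $q_0^2=q_1q_2$ realized as an intersection of three quadrics: $\operatorname{Prym}(\mathcal{F}_{x_0},\rho'_{x_0})=\operatorname{Jac}(\mathcal{C}')$ with $\mathcal{C}'\colon \eta^2=-\det\big(2\xi\,q_0+q_1+\xi^2 q_2\big)$, the double cover of the conic $\Gamma^-$ inside the discriminant locus $\Gamma$ of the net of quadrics through $\mathcal{F}_{x_0}$ (Proposition~\ref{prop:genus_5_prym}, after Masiewicki, Mumford, and Bruin). This formula produces the sextic of~(\ref{eqn:genus-two-dual_intro}) directly from the data $q_0,q_1,q_2$ of Equation~(\ref{eqn:g5_quadrics}), and an Igusa--Clebsch computation (Corollary~\ref{cor:genus_5_prym} together with Proposition~\ref{prop:isog_genus2_curve}) then matches it with the Richelot partner of Theorem~\ref{main1}(2). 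Without this step, or a fully executed substitute (e.g.\ your Klein-four-group decomposition carried through to an explicit identification of all three quotients of $\mathcal{F}_t$), the central claim of the theorem remains unproved.
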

Given the marking of a G\"opel group, Equation~(\ref{eqn:genus-two-dual_intro}) can be brought into the form
\beq
\label{eqn:Kovalevaskaya_curve}
 \eta^2 = \Big(\xi^2 - \frac{D^2}{4} \Big) \Big(16 \xi^3 + 4 A \xi^2 + 4 \xi + A -B^2 \Big) \,,
\eeq
commonly referred to as \emph{Kovalevaskaya curve}, where $A, B, D^2$ are interpreted as physical quantities, namely the constants of motions of the Kovalevskaya top. 
\par We also have the following:
\begin{corollary}
\label{main2}
The Jacobian variety $\operatorname{Jac}(\mathcal{D}_t)$ for $t=0, \infty$ is isogenous to the Jacobian $\operatorname{Jac}(\mathcal{H})$ where $\mathcal{H}$ is the bielliptic, hyperelliptic curve of genus three
\Beq
\label{eqn:curveG3_nf_intro}
  \mathcal{H}: \quad \upsilon^2  =  \big(\zeta^2-1) \, \big(\zeta^2- \lambda_1\big) \,  \big(\zeta^2- \lambda_2 \big) \,  \big(\zeta^2- \lambda_3\big)  \,.
\Eeq
\end{corollary}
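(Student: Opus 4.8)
The plan is to write each of $\operatorname{Jac}(\mathcal{H})$ and $\operatorname{Jac}(\mathcal{D}_t)$, for $t=0,\infty$, up to isogeny as a product of $\operatorname{Jac}(\mathcal{C})$ with an explicit elliptic curve, and then to identify the two elliptic factors. First I would decompose $\operatorname{Jac}(\mathcal{H})$. The curve $\mathcal{H}$ in~(\ref{eqn:curveG3_nf_intro}) is even in $\zeta$, so it carries the three commuting involutions $\jmath_{\mathcal H}\colon(\zeta,\upsilon)\mapsto(-\zeta,\upsilon)$, the hyperelliptic involution $\sigma\colon(\zeta,\upsilon)\mapsto(\zeta,-\upsilon)$, and $\imath=\sigma\circ\jmath_{\mathcal H}\colon(\zeta,\upsilon)\mapsto(-\zeta,-\upsilon)$. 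One checks that $\jmath_{\mathcal H}$ has exactly four fixed points (two over $\zeta=0$ and two over $\zeta=\infty$), with
\[
 \mathcal{H}/\langle\jmath_{\mathcal H}\rangle\ =\ \mathcal{E}\colon\quad \upsilon^2=(\xi-1)(\xi-\lambda_1)(\xi-\lambda_2)(\xi-\lambda_3),\qquad \xi=\zeta^2,
\]
an elliptic curve; that $\mathcal{H}/\langle\sigma\rangle\cong\mathbb{P}^1$; and that $\imath$ is base-point free with $\mathcal{H}/\langle\imath\rangle\cong\mathcal{C}$ via $(\zeta,\upsilon)\mapsto(\zeta^2,\zeta\upsilon)$ — so $\mathcal{H}$ is precisely the {\'e}tale double cover $p\colon\mathcal{H}\to\mathcal{C}$ of Section~\ref{ssec:thms}, with $\jmath_{\mathcal H}$ realizing its bielliptic structure $\pi\colon\mathcal{H}\to\mathcal{E}$. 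Since $\langle\jmath_{\mathcal H},\sigma\rangle\cong(\mathbb{Z}/2\mathbb{Z})^2$ and $\mathcal{H}/\langle\jmath_{\mathcal H},\sigma\rangle\cong\mathbb{P}^1$, the classical isogeny decomposition for a Klein four-group of automorphisms yields
\[
 \operatorname{Jac}(\mathcal{H})\ \sim\ \operatorname{Jac}\big(\mathcal{H}/\langle\sigma\rangle\big)\times\operatorname{Jac}\big(\mathcal{H}/\langle\jmath_{\mathcal H}\rangle\big)\times\operatorname{Jac}\big(\mathcal{H}/\langle\imath\rangle\big)\ =\ \mathcal{E}\times\operatorname{Jac}(\mathcal{C}).
\]

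Next I would decompose $\operatorname{Jac}(\mathcal{D}_t)$ at the two distinguished values $t=0,\infty$. For every smooth member of the pencil the bielliptic structure $\pi_t\colon\mathcal{D}_t\to\mathcal{Q}_t$ gives $\operatorname{Jac}(\mathcal{D}_t)\sim\operatorname{Jac}(\mathcal{Q}_t)\times\operatorname{Prym}(\mathcal{D}_t,\pi_t)$. By Theorem~\ref{main1}(1) the family $t\mapsto\operatorname{Prym}(\mathcal{D}_t,\pi_t)$ is, on a dense open subset of the parameter line, constantly equal to a surface isogenous to $\operatorname{Jac}(\mathcal{C})$ (namely $\mathsf B$); as the relevant moduli space of $(1,2)$-polarized abelian surfaces is separated, this constancy persists over the whole locus of smooth $\mathcal{D}_t$, which by Theorem~\ref{main1}(4) contains $t=0$ and $t=\infty$. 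Hence $\operatorname{Jac}(\mathcal{D}_t)\sim\operatorname{Jac}(\mathcal{Q}_t)\times\operatorname{Jac}(\mathcal{C})$ there. The quotient $\mathcal{Q}_t=\mathcal{D}_t/\langle\jmath\rangle$ is obtained from~(\ref{eqn:genus-three_intro}) by completing the square in $Z^2$ and eliminating it: $\mathcal{Q}_t$ is the double cover of $\mathbb{P}^1=\mathbb{P}(X,Y)$ branched over the zeros of the binary quartic
\[
 q^{(t)}(X,Y)\ :=\ \big(c_2\,r_1^{(t)}+c_1\,r^{(t)}+c_0\,\Delta^{(t)}\big)^2-36\,p_0^{(t)}\big(c_1^2-4c_0c_2\big)\,p,
\]
hence is itself an elliptic curve; at $t=0$ one has $\Delta^{(0)}=X^2$ and $p_0^{(0)}=1$, and at $t=\infty$ the analogous form results from $t\mapsto 1/t$ and clearing denominators (which interchanges the roles of $X$ and $Y$ in $\Delta^{(t)}$).

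It then remains to prove $\mathcal{Q}_0\cong\mathcal{E}\cong\mathcal{Q}_\infty$. This is the computational heart: one substitutes $t=0$ into $q^{(0)}$, then substitutes $\kappa_{1,5}^2=\lambda_1$ together with the expressions~(\ref{eqn:moduli_gd_intro}) for $c_0,c_1,c_2$, and checks that the four zeros of $q^{(0)}$ — which, being even in $X$ and in $Y$, form a set of the shape $\{\pm\alpha,\pm\beta\}$ — have the same cross-ratio as $\{1,\lambda_1,\lambda_2,\lambda_3\}$, i.e.\ that $\mathcal{Q}_0$ and $\mathcal{E}$ have equal $j$-invariant (isomorphism of the elliptic curves already suffices, since only an isogeny of Jacobians is claimed). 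Structurally, the unique involution of $\mathbb{P}^1$ pairing $\{1,\lambda_1\}$ with $\{\lambda_2,\lambda_3\}$ — exactly the pairing that distinguishes the G{\"o}pel partition of Theorem~\ref{main1}(2) — has a fixed locus playing the role of $\{0,\infty\}$, and this is mirrored by the fact that $r^{(0)}$, $r_1^{(0)}$, and $p$ depend on $\lambda_1$ and $\lambda_2\lambda_3$ only through symmetric combinations; this coincidence is what forces the two elliptic curves to agree, and the identical computation at $t=\infty$ gives $\mathcal{Q}_\infty\cong\mathcal{E}$. Combining the three steps, $\operatorname{Jac}(\mathcal{D}_t)\sim\mathcal{E}\times\operatorname{Jac}(\mathcal{C})\sim\operatorname{Jac}(\mathcal{H})$ for $t=0,\infty$.

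The main obstacle is this explicit identification $\mathcal{Q}_0\cong\mathcal{E}\cong\mathcal{Q}_\infty$: pushing the substitutions~(\ref{eqn:moduli_gd_intro}) and $\kappa_{1,5}^2=\lambda_1$ through the cross-ratio (equivalently $j$-invariant) computation and carrying out the simplification. A secondary, more conceptual point is the specialization of $\operatorname{Prym}(\mathcal{D}_t,\pi_t)$ to the fibers $t=0,\infty$, where the plane model~(\ref{eqn:genus-three_intro}) itself degenerates; this is cleanest if read off the Kummer-surface and elliptic-fibration picture used elsewhere in the paper rather than from a bare continuity argument.
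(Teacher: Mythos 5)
Your proposal is correct and follows essentially the same route as the paper: the paper likewise combines the splitting $\operatorname{Jac}(\mathcal{H})\sim\operatorname{Jac}(\mathcal{C})\times\mathcal{E}$ (its Lemma~\ref{lem:product_decomp}, which you reprove via the Klein four-group), the bielliptic decomposition $\operatorname{Jac}(\mathcal{D}_t)\sim\operatorname{Prym}(\mathcal{D}_t,\pi_t)\times\operatorname{Jac}(\mathcal{Q}_t)$ of Proposition~\ref{prop:decomp} together with Theorem~\ref{main1}(1), and the observation that $\operatorname{Jac}(\mathcal{Q}_t)$ at $t=0,\infty$ has the $j$-invariant of Equation~(\ref{eqn:j-inv}). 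The final $j$-invariant identification that you flag as the remaining computation is exactly the step the paper also asserts by direct verification rather than writing out.
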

\begin{remark}
There is a second choice for $\mathfrak{M}'$ given by an extension of the function field of $\mathfrak{M}$ with $\kappa_{2,3}^2 =\lambda_2\lambda_3$ that yields analogous results in Theorem~\ref{main1} and Corollary~\ref{main2a}. In this case, $c_0, c_1, c_2$ are given by Equation~(\ref{eqn:moduli_gd_1}).
\end{remark}
\subsection{Discussion and overview}
Barth studied abelian surfaces with a polarization of type $(1,2)$ and proved their close connection with Prym varieties of smooth, bielliptic curves of genus three \cite{MR946234}. An excellent summary of Barth's construction was given in \cites{Garbagnati08, MR3010125}. Moreover, the fibers of the Prym map were considered in \cites{MR1188194, MR422289, MR875339, MR2406115, MR3781951}. Abelian surfaces with $(1,2)$-polarization were also discussed in \cites{MR2306633, MR2804549, MR2729013} and by the authors in \cites{Clingher:2017aa, Clingher:2018aa, CMS:2019}. An algebraic-geometric approach for studying 2-isogenous abelian surfaces was introduced in \cite{MR2457735}. Bielliptic curves of genus three and abelian surfaces with $(1,2)$-polarization have also appeared as spectral curves of Lax representations of certain algebraic integrable systems and the Kovalevskaya top \cites{MR912838,MR923636, MR990136, MR3798190}. Solving the equations of motion for the Kovalevskaya top is equivalent to a linear flow on an abelian surface with $(1,2)$-polarization. On the other hand, Kovalevskaya presented in her celebrated paper \cite{MR1554772} a separation of variables of the corresponding integrable system using the (hyperelliptic) curve of genus two in Equation~(\ref{eqn:Kovalevaskaya_curve}) whose Jacobian is associated with the integrals of motion of the Kovalevskaya top.  In this article, we will derive explicit normal forms for the pencil of plane, bielliptic curves of genus three (and their unramified double coverings by canonical curves of genus five) such that the Prym variety of its general member is 2-isogenous to the Jacobian of a very general  curve of genus two in $\mathfrak{M}$ (or the Richelot isogenous curve). 
\par The main difficulty in describing explicitly the items of diagram $(\ref{d2})$, in terms of the Rosenhain $\lambda$-parameters, stems from the inherent laboriousness of  computing or describing curves within abelian surfaces. Our approach, which fixes most of this problem, is to push and understand $(\ref{d2})$ to level of the Kummer surfaces.
\Beq
\label{d3}
\begin{tikzcd}[row sep=large, column sep=large]
\operatorname{Jac}({\mathcal C}') 
 \arrow[rr, "\Phi'"]  
 \arrow[loop, out=120, in=60, looseness=5,  "-{\rm id}"] 
 \arrow[ddd, dashrightarrow, bend right=40]
& & 
\mathsf{B}  
 \arrow[rr, "\Phi"]  
 \arrow[loop, out=120, in=60, looseness=5,  "-{\rm id}"] 
  \arrow[ddd, dashrightarrow, bend left=40]
& &   
\operatorname{Jac}(\mathcal{C})  
\\
{\mathcal F}_t 
\arrow[d, "\pi'_t"]
\arrow[u, hook]
 \arrow[rr, "\rho'_t= \Phi' \vert_{\mathcal{F}_t}"]  
& & 
{\mathcal D}_t 
\arrow[d, "\pi_t"]
\arrow[u, hook]
\\
\mathcal{E}'_t 
\arrow[rr, "\rm{2-isogeny}"]  
\arrow[d, hook]
& &  
\mathcal{E}_t  
\arrow[d, hook] 
& &  \\
\operatorname{Kum}\left ( \operatorname{Jac}{\mathcal{C}'}  \right ) 
\arrow[rr, dashrightarrow, "\phi'"]
&
& \ \operatorname{Kum} \left ( \mathsf{B} \right )  
\arrow[rr, dashrightarrow, "\phi"]
& &  \ \operatorname{Kum} \left ( \operatorname{Jac}{\mathcal{C}} \right )
\end{tikzcd}
\smallskip
\Eeq
Using this point of view, as outlined in diagram $(\ref{d3})$, the pencils $\mathcal{E}_t$ and $\mathcal{E}'_t$ correspond to Jacobian elliptic fibrations on the Kummer surfaces $\operatorname{Kum} \left ( \operatorname{Jac}{\mathcal{C}' } \right ) $ and $\operatorname{Kum} \left ( \mathsf{B} \right )$. The rich geometry of these objects is quite well understood, in particular  the sequence of rational maps
\Beq
 \operatorname{Kum}\left ( \operatorname{Jac}{\mathcal{C}'}  \right )  \ \dasharrow  \ \operatorname{Kum} \left ( \mathsf{B} \right ) \ \dasharrow \ \operatorname{Kum} \left ( \operatorname{Jac}\mathcal{C} \right )
\Eeq 
can be described in terms of even-eight curve configurations introduced in \cites{MR2804549, MR0429917}. 

\par This article is structured as follows: in Section~\ref{section1} we establish convenient normal forms for certain abelian surfaces with polarizations of type $(1,1)$, $(1,2)$, $(2,2)$, and their associated Kummer surfaces. In Section~\ref{section2} we construct a pencil of plane, bielliptic curves of genus three and an induced genus-one fibration from the Abel-Jacobi map of a single smooth quartic curve. This quartic curve is determined by the point of order two $p \in \operatorname{Jac}(\mathcal{C})[2]$ and a G\"opel group $G' \ni p$. We then show that the obtained genus-one fibration is isomorphic to a Jacobian elliptic fibration on $\operatorname{Kum}(\mathsf{B})$. We also prove certain properties for the special members of the pencil of curves of genus three, and we construct their unramified coverings by curves of genus five which we also prove to be bielliptic. In Section~\ref{section4} we combine these results to prove Theorem~\ref{main1}, Theorem~\ref{main2a}, and Corollary~\ref{main2}.
\subsection*{Acknowledgments}
We would like to thank the referee for their thoughtful comments and efforts towards improving our manuscript. 
\section{Plane curves and associated K3 surfaces}
\label{section1}
Polarizations on an abelian surface $\mathsf{A}\cong \mathbb{C}^2/\Lambda$ are known to correspond to positive definite hermitian forms $H$ on $\mathbb{C}^2$,  satisfying $E = \operatorname{Im} H(\Lambda,\Lambda) \subset \mathbb{Z}$.  In turn, such a hermitian form determines the first Chern class of a line bundle in the N\'eron-Severi group $\mathrm{NS}(\mathsf{A})$. The bundle itself is then determined only up to a degree zero line bundle.  We will assume that the Picard number $\rho(\mathsf{A})=1$, so that the N\'eron-Severi group of $\mathsf{A}$ is generated by this line bundle \cite{MR2062673}. One may always choose a basis of $\Lambda$ such that $E$ is given  by a matrix $\bigl(\begin{smallmatrix} 0&D\\ -D&0 \end{smallmatrix} \bigr)$ with $D=\bigl(\begin{smallmatrix}d_1&0\\ 0&d_2 \end{smallmatrix} \bigr)$ where $d_1, d_2 \in \mathbb{N}$, $d_1, d_2 > 0 $, and $d_1$ divides $d_2$. The pair $(d_1, d_2)$ gives the {\it type} of the polarization. 
\par Let $\mathcal{C}$ be a smooth curve of genus two. On its Jacobian $\mathsf{A}=\operatorname{Jac}(\mathcal{C})$ the divisor class $\Theta=[\mathcal{C}]$ is an effective divisor such that the hermitian form associated with the line bundle $\mathscr{U}= \mathcal{O}_\mathsf{A}(\Theta)$ is a polarization of type $(1,1)$, also called a principal polarization. We will also consider an abelian surface $\mathsf{B}$ with a $(1,2)$-polarization given by an ample symmetric line bundle $\mathscr{V}$ such that $\mathscr{V}^2 =4$. In this case, the linear system $|\mathscr{V}|$ is a pencil on $\mathsf{B}$ of generically smooth, bielliptic curves of genus three; see~\cite{MR946234}.
\subsection{Abelian and Kummer surfaces with principal polarization}
\label{sec:PPAS}
Let a smooth curve of genus two $\mathcal{C}$ be given in affine coordinates $(\xi,\eta)$ by the Rosenhain normal form 
\beq
\label{eqn:Rosenhain}
 \mathcal{C}: \quad \eta^2 = \xi \,\big(\xi-1) \, \big(\xi- \lambda_1\big) \,  \big(\xi- \lambda_2 \big) \,  \big(\xi- \lambda_3\big) \,.
\eeq 
We denote the hyperelliptic involution on $\mathcal{C}$ by $\imath_\mathcal{C}$. An ordered tuple $(\lambda_1, \lambda_2, \lambda_3)$ -- where the $\lambda_i$ are pairwise distinct and different from $(\lambda_4,\lambda_5,\lambda_6)=(0, 1, \infty)$ -- determines a point in the moduli space $\mathfrak{M}$ of curves of genus two with marked level-two structure.  The Weierstrass points of $\mathcal{C}$ are the six points  $p_i: (\xi,\eta)=(\lambda_i,0)$ for $i=1, \dots,5$, and  the point $p_6$ at infinity. Unless stated otherwise, we assume that $\mathcal{C}$ is a very general curve of genus two.
\par Translations of the Jacobian $\mathsf{A}=\operatorname{Jac}(\mathcal{C})$ by a point of order two of $\mathsf{A}$ are isomorphisms of the Jacobian and map the set of 2-torsion points to itself. In fact, for any isotropic two-dimensional subspace $G' \cong (\mathbb{Z}/2 \mathbb{Z})^2$ of $\mathsf{A}[2]$, also called \emph{G\"opel group}, it is well known that $\mathsf{A}'=\mathsf{A}/G'$ is again a principally polarized abelian surface~\cite{MR2514037}*{Sec.~23}. The corresponding isogeny $\Psi': \mathsf{A} \to \mathsf{A}'$ between principally polarized abelian surfaces has as its kernel $G' \leqslant \mathsf{A}[2]$ and is called a \emph{$(2,2)$-isogeny}. 
\par In the case of the Jacobian of a curve of genus two, every nontrivial 2-torsion point is the difference of Weierstrass points on $\mathcal{C}$. In fact, the sixteen points of order two of $\mathsf{A}=\operatorname{Jac}(\mathcal{C})$ are obtained using the embedding of the curve into the connected component of the identity in the Picard group, i.e., $\mathcal{C} \hookrightarrow \operatorname{Jac}(\mathcal{C}) \cong \operatorname{Pic}^0(\mathcal{C})$ with $p \mapsto [p -p_6]$. We obtain 15 elements $p_{i j} \in \mathsf{A}[2]$ with $1 \le i < j \le 5$ as
\beq
 \label{oder2points}
  p_{i6} = [ p_i - p_6] \; \text{for $1 \le i \le 5$}\,, \qquad 
  p_{ij}=[ p_i + p_j - 2 \, p_6]  \; \text{for $1 \le i < j \le 5$}\,, 
\eeq
and set $p_0=p_{66}= [0]$. For $\{i, j, k, l, m, n\}=\{1, \dots, 6\}$,  the group law on $\mathsf{A}[2]$ is given by the relations
\beq
 \label{group_law}
    p_0 +  p_{ij} =  p_{ij}\,, \quad  p_{ij} +  p_{ij} =  p_{0}\,, \quad 
    p_{ij} + p_{kl} =  p_{mn}, \quad p_{ij} +
    p_{jk} =  p_{ik}\,.
\eeq
The  space $\mathsf{A}[2]$ of 2-torsion points admits a symplectic bilinear form, called the \emph{Weil pairing}. The Weil pairing is induced by the pairing
\beq
 \langle [ p_i - p_j  ] ,[ p_k - p_l] \rangle =\#\{  p_{i}, p_{j}\}\cap \{ p_{k}, p_{l}\} \mod{2} \,,
\eeq
such that the two-dimensional, maximal isotropic subspaces of $\mathsf{A}[2]$ with respect to the Weil pairing are the G\"opel groups. Then, it is easy to check that there are exactly 15 inequivalent G\"opel groups. We will fix a point of order two, say $p=p_{46} \in \mathsf{A}[2]$, and a G\"opel group $G'=\{ 0, p_{15}, p_{23}, p_{46} \} \ni p$. Using the embedding of the curve into the Picard group, we associate $G'$ with the pairing of the Weierstrass points of $\mathcal{C}$ given by $(\lambda_1,\lambda_5=1)$, $(\lambda_2,\lambda_3)$, $(\lambda_4=0,\lambda_6=\infty)$. Using $G'$ we can construct two natural covering spaces of the moduli space $\mathfrak{M}$, namely the set $\mathfrak{M}'_{p_{23}}$ of tuples $(\lambda_1, \kappa_{2,3}, \lambda_3)$ with $\lambda_2 \lambda_3=\kappa_{2,3}^2$ and the set $\mathfrak{M}'_{p_{15}}$ of tuples $(\kappa_{1,5}, \lambda_2, \lambda_3)$ with $\lambda_1=\kappa_{1,5}^2$ such that $(\lambda_1,\lambda_2,\lambda_3) \in \mathfrak{M}$. In turn, both $\mathfrak{M}'_{p_{23}}$ and $\mathfrak{M}'_{p_{15}}$ are covered by the set of tuples $(\kappa_{1,5}, \kappa_{2,3}, \lambda_3)$. Moreover, we introduce the convenient moduli $\Lambda_1 = (\lambda_1 + \lambda_2\lambda_3)/l$, $\Lambda_2 = (\lambda_2 + \lambda_1\lambda_3)/l$, $\Lambda_3 = (\lambda_2 + \lambda_1\lambda_3)/l$ with $l=\kappa_{1,5} \kappa_{2,3}$. The work of the authors in \cite{Clingher:2018aa} proved that $\kappa_{1,5}, \kappa_{2,3}, l$ are rational functions of the Siegel theta functions.
\par In the case $\mathsf{A}=\operatorname{Jac}(\mathcal{C})$ one knows that the $(2,2)$-isogenous abelian surface $\mathsf{A}'=\mathsf{A}/G'$ satisfies $\mathsf{A}' =\operatorname{Jac}(\mathcal{C}')$ for some smooth curve of genus two $\mathcal{C}'$.  The question is how to describe the curve  $\mathcal{C}'$ explicitly. The relationship between the geometric moduli of the two curves was found by Richelot \cite{MR1578135}; see also \cite{MR970659}: if we choose for $\mathcal{C}$ a sextic equation $\eta^2 = f_6(\xi)$, then any factorization $f_6 = A\cdot B\cdot C$ into three degree-two polynomials $A, B, C$ defines a new curve of genus two $\mathcal{C}'$ given by
\beq
\label{Richelot}
 \mathcal{C}': \quad \Delta_{ABC} \cdot \eta^2 = [A,B] \, [A,C] \, [B,C] 
\eeq
where we have set $[A,B] = B \, \partial_\xi A  - A \, \partial_\xi B$ with $\partial_\xi$ denoting the derivative with respect to $\xi$ and $\Delta_{ABC}$ is the determinant of $(A, B, C)$ with respect to the basis $\xi^2, \xi, 1$. 
We have the following:
\begin{proposition}
\label{prop:isog_genus2_curve}
Let $\mathcal{C}$ be the smooth curve of genus two in Equation~(\ref{eqn:Rosenhain}) and  $G'$  be the G\"opel group  $G'=\{ 0, p_{15}, p_{23}, p_{46} \} \leqslant  \operatorname{Jac}(\mathcal{C})[2]$. Over $\mathfrak{M}'_{p}$ with $p \in \{p_{15}, p_{23}\}$ the curve $\mathcal{C}'$ with $\operatorname{Jac}(\mathcal{C}')=\operatorname{Jac}(\mathcal{C})/G'$ is given by
\beq
\label{eqn:curveG2_nf}
 \mathcal{C}': \quad \eta^2 =  \Big(\xi - 2 \big( \lambda_1+ \lambda_2\lambda_3\big) \Big)\Big( \big( \xi + \lambda_1 + \lambda_2\lambda_3)^2-36 \lambda_1\lambda_2\lambda_3\Big)  \Big( c_2 \xi^2 + c_1 \xi + c_0 \Big)   \,,
\eeq
where for $\mathfrak{M}'_{p_{23}}$ we have $\kappa_{2,3}^2=\lambda_2\lambda_3$ and
\beq
\label{eqn:moduli_gd_1}
\begin{split}
c_0 & =  \, 2  \big( \lambda_1 -5 \lambda_2\lambda_3\big) \big( 5\lambda_1 - \lambda_2\lambda_3\big)  \kappa_{2,3} + \big(24 \lambda_2 \lambda_3 + \lambda_2 + \lambda_3\big) \lambda_1^2 \\
& \, +2 \lambda_1 \lambda_2\lambda_3 \big( 12 \lambda_2 \lambda_3 - 17 (\lambda_2 + \lambda_3) +12 \big) + \lambda_2^2\lambda_3^2 \big(\lambda_2 + \lambda_3 + 24\big) \,,\\
c_1 & = \, 8 \big (\lambda_1 + \lambda_2 \lambda_3 \big)  \kappa_{2,3} - 2 \big( 6 \lambda_2 \lambda_3 - \lambda_2 - \lambda_3\big) \lambda_1 + 2 \big(\lambda_2 + \lambda_3-6\big)  \lambda_2 \lambda_3 \,,\\
c_2 & = \, \lambda_2 + \lambda_3 - 2 \, \kappa_{2,3} \,,
\end{split}
\eeq 
and for $\mathfrak{M}'_{p_{15}}$ we have $\kappa_{1,5}^2=\lambda_1$ and
\beq
\label{eqn:moduli_gd_2}
\begin{split}
c_0 & =  \, 2  \big( \lambda_1 -5 \lambda_2\lambda_3\big) \big( 5\lambda_1 - \lambda_2\lambda_3\big)  \kappa_{1,5} + \lambda_1^3 + \lambda_2^2\lambda_3^2 \\
& \, - \lambda_1^2 \big( 34 \lambda_2 \lambda_3 - 24 (\lambda_2 + \lambda_3) -1 \big) + \lambda_1\lambda_2\lambda_3 \big(\lambda_2\lambda_3 + 24(\lambda_2+ \lambda_3) - 34\big) \,,\\
c_1 & = \, 8 \big (\lambda_1 + \lambda_2 \lambda_3 \big)  \kappa_{1,5} - 2 \big( 6( \lambda_2 +\lambda_3) - \lambda_2 \lambda_3-1\big) \lambda_1 + 2 \big(\lambda_1^2 +  \lambda_2 \lambda_3\big) \,,\\
c_2 & = \,  \lambda_1 + 1 - 2 \, \kappa_{1,5} \,,
\end{split}
\eeq 
with $c_1^2-4 c_0c_2=144 \kappa_p^2 (\lambda_2-1)(\lambda_3-1)(\lambda_2-\lambda_1)(\lambda_3-\lambda_1)$.
\end{proposition}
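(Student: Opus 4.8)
The plan is to realize $\mathcal{C}'$ through the Richelot construction of Equation~(\ref{Richelot}) and then to bring the resulting sextic into the normal form (\ref{eqn:curveG2_nf}) by an explicit fractional-linear substitution. Under the identification (\ref{oder2points}) of two-torsion points with differences of Weierstrass points, the G\"opel group $G'=\{0,p_{15},p_{23},p_{46}\}$ corresponds to the $2+2+2$ partition of the branch locus of $\mathcal{C}$ into $\{\lambda_1,1\}$, $\{\lambda_2,\lambda_3\}$, $\{0,\infty\}$. Taking the degree-five model $f(\xi)=\xi(\xi-1)(\xi-\lambda_1)(\xi-\lambda_2)(\xi-\lambda_3)$, with $\xi=\infty$ a branch point, and the factors $A=(\xi-\lambda_1)(\xi-1)$, $B=(\xi-\lambda_2)(\xi-\lambda_3)$ and $C=\xi$ for the pair $\{0,\infty\}$, one computes $\Delta_{ABC}=\lambda_1-\lambda_2\lambda_3$ together with
\begin{gather*}
[A,C]=\xi^2-\lambda_1\,,\qquad [B,C]=\xi^2-\lambda_2\lambda_3\,,\\
[A,B]=(\lambda_1+1-\lambda_2-\lambda_3)\,\xi^2-2(\lambda_1-\lambda_2\lambda_3)\,\xi+\lambda_1(\lambda_2+\lambda_3)-\lambda_2\lambda_3(1+\lambda_1)\,.
\end{gather*}
Richelot's theorem then yields the model $\widetilde{\mathcal{C}}'\colon\ (\lambda_1-\lambda_2\lambda_3)\,\eta^2=[A,B]\,(\xi^2-\lambda_1)(\xi^2-\lambda_2\lambda_3)$, which by the classical theory recalled above satisfies $\operatorname{Jac}(\widetilde{\mathcal{C}}')\cong\operatorname{Jac}(\mathcal{C})/G'$. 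The shape of $[A,C]$ and $[B,C]$ already explains the passage to the double covers: the six branch points of $\widetilde{\mathcal{C}}'$ are $\pm\sqrt{\lambda_1}$, $\pm\sqrt{\lambda_2\lambda_3}$ and the two roots $r_1,r_2$ of $[A,B]$, so $\kappa_{1,5}=\sqrt{\lambda_1}$ (respectively $\kappa_{2,3}=\sqrt{\lambda_2\lambda_3}$) is exactly the coordinate one must adjoin in order to split off one of these pairs over the base.

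The next step is to normalize $\widetilde{\mathcal{C}}'$. Over $\mathfrak{M}'_{p_{15}}$ (where $\kappa_{1,5}^2=\lambda_1$) the relevant M\"obius substitution is
\[
  M\colon\quad \xi\ \longmapsto\ \frac{(5\lambda_1-\lambda_2\lambda_3)\,\xi+\kappa_{1,5}(\lambda_1-5\lambda_2\lambda_3)}{\xi-\kappa_{1,5}}\,,
\]
which sends $\kappa_{1,5}$ to $\infty$ and $-\kappa_{1,5}$ to $2(\lambda_1+\lambda_2\lambda_3)$; a short computation of the sum and product of $M(\sqrt{\lambda_2\lambda_3})$ and $M(-\sqrt{\lambda_2\lambda_3})$ shows that $\{\sqrt{\lambda_2\lambda_3},-\sqrt{\lambda_2\lambda_3}\}$ is carried onto the root set of $(\xi+\lambda_1+\lambda_2\lambda_3)^2-36\lambda_1\lambda_2\lambda_3$. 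Clearing the denominator $\xi-\kappa_{1,5}$ and absorbing the resulting constants into a rescaling of $\eta$, the image of $[A,B]$ becomes a quadratic whose leading coefficient one arranges to equal
\[
  c_2=\frac{(\lambda_1+1-\lambda_2-\lambda_3)\,(r_1-\kappa_{1,5})(r_2-\kappa_{1,5})}{\lambda_1-\lambda_2\lambda_3}=\lambda_1+1-2\kappa_{1,5}\,;
\]
expanding the remaining symmetric functions of $r_1,r_2$ then produces $c_2\xi^2+c_1\xi+c_0$ with the coefficients (\ref{eqn:moduli_gd_2}), so that $\widetilde{\mathcal{C}}'$ is isomorphic to the curve (\ref{eqn:curveG2_nf}). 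Over $\mathfrak{M}'_{p_{23}}$ one repeats the argument after interchanging the roles of the pairs $\{\lambda_1,1\}$ and $\{\lambda_2,\lambda_3\}$, i.e.\ replacing $\lambda_1$ by $\lambda_2\lambda_3$ and $\kappa_{1,5}$ by $\kappa_{2,3}$ in $M$; this gives $c_2=\lambda_2+\lambda_3-2\kappa_{2,3}$ and the coefficients (\ref{eqn:moduli_gd_1}). Replacing $\kappa_p$ by $-\kappa_p$ interchanges which of $\pm\kappa_p$ is sent to $\infty$ and produces an isomorphic curve, which is precisely the content of the two sheets of $\mathfrak{M}'_p\to\mathfrak{M}$.

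It then remains to verify the discriminant identity. From $c_2\xi^2+c_1\xi+c_0=c_2(\xi-M(r_1))(\xi-M(r_2))$ one gets $c_1^2-4c_0c_2=c_2^2\big(M(r_1)-M(r_2)\big)^2$. A direct computation gives $M(r_1)-M(r_2)=-6\kappa_{1,5}(\lambda_1-\lambda_2\lambda_3)(r_1-r_2)/\big((r_1-\kappa_{1,5})(r_2-\kappa_{1,5})\big)$, and combining this with the expression for $c_2$ above and with $(r_1-r_2)^2=\operatorname{disc}[A,B]/(\lambda_1+1-\lambda_2-\lambda_3)^2$, everything collapses to $c_1^2-4c_0c_2=36\,\kappa_{1,5}^2\,\operatorname{disc}[A,B]$. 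Finally $\operatorname{disc}[A,B]=4\operatorname{Res}(A,B)=4(\lambda_2-1)(\lambda_3-1)(\lambda_2-\lambda_1)(\lambda_3-\lambda_1)$, whence $c_1^2-4c_0c_2=144\,\kappa_p^2(\lambda_2-1)(\lambda_3-1)(\lambda_2-\lambda_1)(\lambda_3-\lambda_1)$ for $p=p_{15}$, and the same identity with $\kappa_{2,3}$ in place of $\kappa_{1,5}$ for $p=p_{23}$. Since the $\lambda_i$ are pairwise distinct and different from $0$ and $1$, this also shows that $c_2\xi^2+c_1\xi+c_0$ has distinct roots and that $c_2\neq0$; together with the genericity of $\mathcal{C}$ it follows that the six branch points $2(\lambda_1+\lambda_2\lambda_3)$, $-\lambda_1-\lambda_2\lambda_3\pm6\sqrt{\lambda_1\lambda_2\lambda_3}$ and the two roots of $c_2\xi^2+c_1\xi+c_0$ are pairwise distinct, so $\mathcal{C}'$ is a smooth curve of genus two.

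I expect the main obstacle to lie in the second step: not in finding the M\"obius map $M$, but in pushing the symbolic expansion of the symmetric functions of $M(r_1)$ and $M(r_2)$ far enough to reproduce the stated polynomials (\ref{eqn:moduli_gd_1})--(\ref{eqn:moduli_gd_2}) verbatim, and in keeping the signs of $\kappa_{1,5}$ and $\kappa_{2,3}$ consistent over the two sheets. An alternative that bypasses the normalization altogether is to compute the Igusa--Clebsch invariants of $\widetilde{\mathcal{C}}'$ and of the curve (\ref{eqn:curveG2_nf}) and to check that they define the same point of the weighted projective moduli space of curves of genus two; by the Torelli theorem this gives the canonical isomorphism $\operatorname{Jac}(\mathcal{C}')\cong\operatorname{Jac}(\mathcal{C})/G'$ directly, and shows \emph{a posteriori} that those invariants are rational in $\lambda_1,\lambda_2,\lambda_3$. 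In either form, the concluding step is a lengthy but entirely mechanical computer-algebra verification.
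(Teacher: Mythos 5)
Your proposal is correct, and its first half coincides exactly with the paper's: the same degree-five model, the same factorization $A=(\xi-\lambda_1)(\xi-1)$, $B=(\xi-\lambda_2)(\xi-\lambda_3)$, $C=\xi$ dictated by the G\"opel group, and the same expressions for $[A,B]$, $[A,C]$, $[B,C]$, $\Delta_{ABC}$. Where you diverge is the verification that the Richelot model is isomorphic to the stated normal form: the paper computes the Igusa--Clebsch invariants of both curves and checks that they define the same point of $\mathbb{P}(2,4,6,10)$ (with scaling factor $r=18(\lambda_1-\lambda_2\lambda_3)\kappa_p$), invoking the fact that over $\mathbb{C}$ these invariants determine a genus-two curve up to isomorphism; you instead exhibit the isomorphism directly via the M\"obius map $M$ sending $\kappa_{1,5}\mapsto\infty$, $-\kappa_{1,5}\mapsto 2(\lambda_1+\lambda_2\lambda_3)$, and $\pm\sqrt{\lambda_2\lambda_3}$ to the roots of $(\xi+\lambda_1+\lambda_2\lambda_3)^2-36\lambda_1\lambda_2\lambda_3$ (your map and the symmetric-function checks are correct; I verified the sum $-2(\lambda_1+\lambda_2\lambda_3)$, the product $\lambda_1^2-34\lambda_1\lambda_2\lambda_3+\lambda_2^2\lambda_3^2$, the value $c_2=\lambda_1+1-2\kappa_{1,5}$, and the discriminant identity via $\operatorname{disc}[A,B]=4\operatorname{Res}(A,B)$). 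Your route is more constructive: it explains \emph{why} adjoining $\kappa_{1,5}$ or $\kappa_{2,3}$ is the right field extension, makes the two rational Weierstrass points of $\mathcal{C}'$ over $\mathfrak{M}'_p$ visible (which the paper needs later, cf.\ Remark~\ref{rem:dual_Goepel}), and reduces the remaining burden to matching two coefficients $c_0,c_1$ rather than four weighted invariants of degree up to ten. The paper's route avoids guessing the M\"obius map at the cost of a heavier symbolic computation and an appeal to the moduli interpretation. Both end in a mechanical computer-algebra check, which you correctly identify; there is no gap.
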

\begin{remark} 
In Proposition~\ref{prop:isog_genus2_curve} it is assumed that the curve $\mathcal{C}$ is smooth and very general. This is necessary to guarantee that the quintic in $\xi$  has distinct roots. For example, $c_2=0$ implies $\lambda_2 =\lambda_3$ and $\mathcal{C}$ not smooth. Moreover, $\lambda_1 =\lambda_2 \lambda_3$ implies $\Delta_{ABC} =0$ in Equation~(\ref{Richelot}) since $\mathcal{C}$ then admits an elliptic involution.
\end{remark}
\begin{proof}
One checks that
\begin{small}
\begin{gather*}
[A,C]=x^2- \lambda_1\,, \qquad [B,C]=x^2- \lambda_2\lambda_3\, , \\
[A,B]=(1+\lambda_1-\lambda_2-\lambda_3) \, x^2-2(\lambda_1-\lambda_2\lambda_3) \, x +\lambda_1\lambda_2+\lambda_1\lambda_3-\lambda_2\lambda_3-\lambda_1\lambda_2\lambda_3 \,,
\end{gather*}
\end{small}%
and $\Delta_{ABC}=\lambda_1-\lambda_2\lambda_3$. We compute its Igusa-Clebsch invariants, using the same normalization as in \cites{MR3712162,MR3731039}.  Denoting the Igusa-Clebsch invariants of the curve of genus two in Equation~(\ref{Richelot}) and Equation~(\ref{eqn:curveG2_nf}) by $[ I_2 : I_4 : I_6 : I_{10} ] \in \mathbb{P}(2,4,6,10)$ and $[ I'_2 : I'_4 : I'_6 : I'_{10} ]$, respectively, one checks that
\beq
[ I_2 : I_4 : I_6 : I_{10} ] = [ r^2 I'_2 \ : \ r^4I'_4 \ : \ r^6I'_6 \ : \ r^{10}I'_{10} ] = [ I'_2 : I'_4 : I'_6 : I'_{10} ] \,,
\eeq
with $r=18(\lambda_1-\lambda_2\lambda_3)\epsilon$ with $\epsilon=\kappa_{2,3}$ for Equation~(\ref{eqn:moduli_gd_1}) and $\epsilon=\kappa_{1,5}$ for Equation~(\ref{eqn:moduli_gd_2}). Since the Igusa-Clebsch invariants for the two curves give the same point in weighted projective space, the claims follows.
\end{proof}
\begin{remark}
\label{rem:isog_curves}
There are exactly three G\"opel groups that contain the fixed element $p_{46} \in \operatorname{Jac}(\mathcal{C})[2]$, namely the groups 
\beq
 G' =\{ 0, p_{15}, p_{23}, p_{46} \} \,, \quad  G''=\{ 0, p_{13}, p_{25},  p_{46} \} \,, \quad G'''=\{ 0,  p_{12}, p_{35}, p_{46} \}  \,,
\eeq
with Richelot isogenous curves of genus two $\mathcal{C}', \mathcal{C}'', \mathcal{C}'''$. Convenient normal forms for $\mathcal{C}'', \mathcal{C}'''$ are obtained from Equations~(\ref{eqn:curveG2_nf}) by interchanging indices $1 \leftrightarrow 2$ or $1 \leftrightarrow 3$, respectively. By construction, the abelian surfaces $\operatorname{Jac}(\mathcal{C}')$, $\operatorname{Jac}(\mathcal{C}'')$, $\operatorname{Jac}(\mathcal{C}''')$ are all principally polarized and $(2,2)$-isogenous to $\operatorname{Jac}(\mathcal{C})$.
\end{remark}
\begin{remark}
\label{rem:dual_Goepel}
The Richelot isogeny in Equation~(\ref{Richelot}) constructs a model for $\mathcal{C}'$ such that the symmetric polynomials of the coordinates of pairs of Weierstrass points are rational over $\mathfrak{M}$. Our model for $\mathcal{C}'$ in Proposition~\ref{prop:isog_genus2_curve} over $\mathfrak{M}'_{p}$ with $p \in \{p_{15}, p_{23}\}$ has in addition two rational Weierstrass points. It was shown in \cite{Clingher:2018aa} that this guarantees that a dual G\"opel group $G \leqslant  \operatorname{Jac}(\mathcal{C}')[2]$ can be constructed from points of order two with rational coefficients over $\mathfrak{M}'_{p}$ that induces the dual $(2,2)$-isogeny $\Psi: \mathsf{A}' \to \mathsf{A} = \mathsf{A}'/G$.
\end{remark}
\par The element $p_{46} \in \operatorname{Jac}(\mathcal{C})[2]$ determines a partition of the six Weierstrass points of $\mathcal{C}$ in Equation~(\ref{eqn:Rosenhain}) into sets of two, four, and all six points. We obtain three double covers of the projective line $\mathbb{P}_{\xi}$ with affine coordinate $\xi$, branched respectively at the marked sets of two, four, and all six points of genus zero, one, and two, respectively. The three double covers have a common double cover $\mathcal{H}$, which is the fiber product over $\mathbb{P}^1$ of any two of the curves.  Equivalently, the point of order two $p_{46}$ determines a divisor $\mathrm{D}$ of degree zero with the associated line bundle $\mathscr{L} = \mathcal{O}_{\mathcal{C}}(\mathrm{D})$ satisfying $\mathscr{L}^{\otimes 2}=\mathcal{O}_\mathcal{C}$. The zero section of the line bundle then determines the unramified double cover $p: \mathcal{H} \to \mathcal{C}$. Moreover, every unramified double cover of a hyperelliptic curve of genus two is obtained in this way \cite{MR990136}*{p.~387} and \cite{MR770932}. The following lemma was proved in \cite{beshaj2014decomposition}*{Thm.~1}:
\begin{lemma}
\label{lem:product_decomp}
The curve $\mathcal{H}$, given by
\beq
\label{eqn:curveG3_nf}
\begin{split}
  & \upsilon^2  =  \big(\zeta^2-1) \, \big(\zeta^2- \lambda_1\big) \,  \big(\zeta^2- \lambda_2 \big) \,  \big(\zeta^2- \lambda_3\big)  \,,
 \end{split} 
 \eeq
 is a hyperelliptic, bielliptic curve of genus three such that its Jacobian is isogenous to the product of a Jacobian of a smooth curve of genus two $\mathcal{C}$ and an elliptic curve $\mathcal{E}$, i.e.,
\beq
 \operatorname{Jac}{(\mathcal{H})} \ \simeq \  \operatorname{Jac}{(\mathcal{C})} \times \mathcal{E} \,,
\eeq 
where $\mathcal{E}$ is the elliptic curve with the $j$-invariant
\beq
\label{eqn:j-inv}
j = \frac{256\big(\sigma_1^2-\sigma_1\sigma_2-3\sigma_1\sigma_3+\sigma_2^2-3\sigma_2+9\sigma_3\big)^3}{(\lambda_1-1)^2(\lambda_2-1)^2(\lambda_3-1)^2(\lambda_1-\lambda_2)^2(\lambda_1-\lambda_3)^2(\lambda_2-\lambda_3)^2} \,,
\eeq
for $\sigma_1 = \lambda_1 + \lambda_2 + \lambda_3$, $\sigma_2= \lambda_1\lambda_2 + \lambda_1 \lambda_3 + \lambda_2\lambda_3$, $\sigma_3=\lambda_1\lambda_2\lambda_3$.
\end{lemma}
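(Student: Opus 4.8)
The plan is to exploit the obvious extra automorphisms of $\mathcal{H}$, reduce the isogeny statement to the classical decomposition of a Jacobian under a Klein four-group, and compute $j$ at the end via binary-quartic invariants.

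First I would record the automorphisms. Because the right-hand side of Equation~(\ref{eqn:curveG3_nf}) is a polynomial in $\zeta^2$, the maps $\sigma\colon(\zeta,\upsilon)\mapsto(-\zeta,\upsilon)$ and $\tau\colon(\zeta,\upsilon)\mapsto(-\zeta,-\upsilon)$ are commuting involutions of $\mathcal{H}$ generating a group $\langle\sigma,\tau\rangle\cong(\mathbb{Z}/2\mathbb{Z})^2$, with $\sigma\tau\colon(\zeta,\upsilon)\mapsto(\zeta,-\upsilon)$ the hyperelliptic involution. The function $\zeta$ displays $\mathcal{H}$ as a double cover of $\mathbb{P}^1$ branched over the eight points $\zeta=\pm1,\pm\sqrt{\lambda_1},\pm\sqrt{\lambda_2},\pm\sqrt{\lambda_3}$, so Riemann--Hurwitz gives $g(\mathcal{H})=3$ and $\mathcal{H}$ is hyperelliptic; and $\sigma$ has exactly four fixed points (the two points over $\zeta=0$ and the two over $\zeta=\infty$), so its quotient has genus one and $\mathcal{H}$ is bielliptic.

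Next I would identify the three intermediate quotients. Setting $\xi=\zeta^2$ and $\eta=\zeta\upsilon$, one checks that $\mathcal{H}/\langle\sigma\tau\rangle=\mathbb{P}^1_\zeta$ and $\mathcal{H}/\langle\sigma,\tau\rangle=\mathbb{P}^1_\xi$, that $\mathcal{H}/\langle\sigma\rangle$ is the genus-one curve $\mathcal{E}\colon \upsilon^2=(\xi-1)(\xi-\lambda_1)(\xi-\lambda_2)(\xi-\lambda_3)$, and that $\mathcal{H}/\langle\tau\rangle$ is $\eta^2=\xi(\xi-1)(\xi-\lambda_1)(\xi-\lambda_2)(\xi-\lambda_3)$, i.e., exactly the Rosenhain curve $\mathcal{C}$ of Equation~(\ref{eqn:Rosenhain}); moreover $\tau$ is fixed-point free, so the induced degree-two map $p\colon\mathcal{H}\to\mathcal{C}$ is the \'etale double cover attached to $p_{46}$ from the preceding paragraph. (The one point that needs care is not to interchange $\sigma$ and $\tau$: it is the quotient by the fixed-point-free $\tau$, not by $\sigma$, that is the genus-two curve.) With the quotients in hand, the isogeny follows from the standard splitting of a Jacobian under a $(\mathbb{Z}/2\mathbb{Z})^2$-action (the idempotent-relation decomposition of Kani--Rosen): decomposing $H^1(\mathcal{H},\mathbb{Q})$ into isotypical components for $\langle\sigma,\tau\rangle$, the trivial component is $H^1(\mathbb{P}^1)=0$ while the three nontrivial components are carried by the pullbacks $\pi^*$ from the quotients $\mathcal{H}/\langle\sigma\tau\rangle=\mathbb{P}^1$, $\mathcal{H}/\langle\sigma\rangle=\mathcal{E}$ and $\mathcal{H}/\langle\tau\rangle=\mathcal{C}$; hence $H^1(\mathcal{H},\mathbb{Q})\cong H^1(\mathcal{E},\mathbb{Q})\oplus H^1(\mathcal{C},\mathbb{Q})$ as rational Hodge structures (the dimensions $6=0+2+4$ match), which dualizes to the asserted isogeny $\operatorname{Jac}(\mathcal{H})\sim\mathcal{E}\times\operatorname{Jac}(\mathcal{C})$.

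Finally, for Equation~(\ref{eqn:j-inv}) I would treat $f(\xi)=(\xi-1)(\xi-\lambda_1)(\xi-\lambda_2)(\xi-\lambda_3)$ as a binary quartic. Writing $f=a_0\xi^4+a_1\xi^3+a_2\xi^2+a_3\xi+a_4$, its double cover of $\mathbb{P}^1$ has $j$-invariant $256\,I^3/\operatorname{disc}(f)$, where $I=12a_0a_4-3a_1a_3+a_2^2$ is the classical degree-two invariant and $\operatorname{disc}(f)=\prod_{i<j}(\mu_i-\mu_j)^2$ over $\{\mu_0,\mu_1,\mu_2,\mu_3\}=\{1,\lambda_1,\lambda_2,\lambda_3\}$. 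Substituting the coefficients and writing everything symmetrically in $\sigma_1,\sigma_2,\sigma_3$ gives $I=\sigma_1^2-\sigma_1\sigma_2-3\sigma_1\sigma_3+\sigma_2^2-3\sigma_2+9\sigma_3$ and $\operatorname{disc}(f)=(\lambda_1-1)^2(\lambda_2-1)^2(\lambda_3-1)^2(\lambda_1-\lambda_2)^2(\lambda_1-\lambda_3)^2(\lambda_2-\lambda_3)^2$, which is exactly Equation~(\ref{eqn:j-inv}). The conceptual heart of the lemma is the isogeny, which is immediate once the four-group and its quotients are on the table; the only laborious point is this last invariant-theoretic bookkeeping, in particular pinning down the normalizing constant $256$ (which one can fix, e.g., by specializing the branch points to a harmonic configuration where $j=1728$).
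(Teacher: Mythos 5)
Your proof is correct, and it follows the standard route: the paper itself gives no proof of this lemma but simply cites \cite{beshaj2014decomposition}*{Thm.~1}, where the decomposition is obtained in exactly this way from the Klein four-group $\langle\sigma,\tau\rangle$ and the Kani--Rosen relation, with $\mathcal{H}/\langle\tau\rangle\cong\mathcal{C}$, $\mathcal{H}/\langle\sigma\rangle\cong\mathcal{E}$, and $\mathcal{H}/\langle\sigma\tau\rangle\cong\mathbb{P}^1$. Your identification of the involutions matches the paper's own remark following the lemma ($\jmath=\sigma$ bielliptic with four fixed points, $\imath=\tau$ fixed-point free covering $p\colon\mathcal{H}\to\mathcal{C}$ via $(\xi,\eta)=(\zeta^2,\zeta\upsilon)$), and the normalization $j=256\,I^3/\operatorname{disc}(f)$ is consistent with the Hermite formulas~(\ref{eqn:hermite}) used elsewhere in the paper, since $f=-I/3$ and $4f^3+27g^2=-\operatorname{disc}(f)$ give $1728\cdot 4f^3/(4f^3+27g^2)=256\,I^3/\operatorname{disc}(f)$.
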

We have the following:
\begin{remark}
Within the coarse moduli space $\mathfrak{M}_3$ of curves of genus three, the hyperelliptic locus $\mathfrak{M}^{\mathrm{h}}_3$ is an irreducible five-dimensional sub-variety. We recall that the set of bielliptic curves of genus three $\mathfrak{M}^{\mathrm{be}}_3$ form an irreducible four-dimensional sub-variety of $\mathfrak{M}_3$ \cite{MR932781}. Moreover, it was proven in \cite{MR1816214} that $\mathfrak{M}^{\mathrm{be}}_3$ is rational and $\mathfrak{M}^{\mathrm{be}}_3 \cap \mathfrak{M}^{\mathrm{h}}_3$ is an irreducible, rational sub-variety of $\mathfrak{M}^{\mathrm{be}}_3$ of codimension one. Each isomorphism class $[\mathcal{H}]$ of $\mathfrak{M}^{\mathrm{be}}_3 \cap \mathfrak{M}^{\mathrm{h}}_3$ can be represented as an unramified double covering of a curve of genus two $\mathcal{C}$. Equation~(\ref{eqn:curveG3_nf}) then provides a normal form for $\mathcal{H}$.
\end{remark}
\begin{remark}
The curve in Equation~(\ref{eqn:curveG3_nf}) admits the base-point free involution $\imath \colon \mathcal{H} \rightarrow \mathcal{H}$ with $(\zeta, \upsilon) \mapsto (-\zeta, -\upsilon)$ covering $p \colon \mathcal{H} \rightarrow \mathcal{C}$ with $(\xi, \eta) = (\zeta^2, \zeta\upsilon)$. It also admits the involution $\jmath \colon \mathcal{H} \rightarrow \mathcal{H}$ with $(\zeta, \upsilon) \mapsto (-\zeta, \upsilon)$ covering the double cover $\pi \colon \mathcal{H} \rightarrow {\mathcal E}$ with $\operatorname{Prym}( \mathcal{H},  p \colon  \mathcal{H} \rightarrow \mathcal{C}) \cong \mathcal{E}$. The involutions $\imath$ and $\jmath$ commute, and their composition $\imath \circ \jmath$ defines a hyperelliptic structure on $\mathcal{H}$. 
\end{remark}
\par The quotient $\mathsf{A}/\langle -{\rm id} \rangle$ (where  $-{\rm id}$ is the antipodal involution on an abelian surface $\mathsf{A}$ with $\rho(\mathsf{A})=1$) has sixteen ordinary double points, called the \emph{nodes}. The double points are the images of the points of order two $p_{i j} \in \mathsf{A}[2]$ for $1\le i < j \le 6$. The minimum resolution of  $\mathsf{A}/\langle -{\rm id} \rangle$, denoted by $\operatorname{Kum}(\mathsf{A})$, is a K3 surface known as the \emph{Kummer surface} associated with $\mathsf{A}$. It contains an even set of 16 disjoint rational curves $\mathrm{E}_{i j}$ which are the exceptional divisors introduced in the blow-up process. A second set of 16 disjoint rational curves $\mathrm{T}_{i j}$ are the images of the translates $p_{i j} + \Theta$ of the theta divisor $\Theta = [\mathcal{C}]$ in $\operatorname{Kum}(\mathsf{A})$; they are called \emph{tropes}. The two sets of rational curves, $\{ \mathrm{E}_{i j} \}$ and $\{ \mathrm{T}_{i j}\}$, have a rich symmetry, called the \emph{$16_6$-symmetry} of a Kummer surface. We call the Kummer surface $\operatorname{Kum}(\mathsf{A})$ \emph{generic} if $\mathsf{A}$ has no extra endomorphisms. 
\par For the symmetric product $\mathcal{C}^{(2)}$, the quotient $\mathcal{C}^{(2)}/\langle \imath_\mathcal{C} \times  \imath_\mathcal{C} \rangle$ is realized as a variety in terms of $U=x^{(1)}x^{(2)}$, $X=x^{(1)}+x^{(2)}$, and $Y=y^{(1)}y^{(2)}$ and the affine equation
\beq
\label{kummer_middle}
  Y^2 = U \big(  U  - X +  1 \big)  \prod_{i=1}^3 \big( \lambda_i^2 \, U  -  \lambda_i \, X +  1 \big) \,.
\eeq
The affine variety in Equation~\eqref{kummer_middle} completes to a hypersurface in $\mathbb{P}(1,1,1,3)$ called the \emph{Shioda sextic} \cite{MR2296439} and is birational to the Kummer surface $\operatorname{Kum}(\operatorname{Jac} \mathcal{C})$. In fact, Equation~\eqref{kummer_middle} corresponds to the double cover of the projective plane branched on six lines tangent to a common conic; see \cite{Clingher:2018aa}. Moreover, the corresponding Kummer surface has sixteen rational tropes.
\par It was shown in \cites{Clingher:2017aa, MR4015343} that the Shioda sextic in Equation~\eqref{kummer_middle} equips the Kummer surface $\mathcal{X}=\operatorname{Kum}(\operatorname{Jac} \mathcal{C})$ with a \emph{Jacobian elliptic fibration}, i.e., an elliptic fibration $\pi_\mathcal{X}: \mathcal{X} \to \mathbb{P}^1$ with section $\sigma_\mathcal{X}$ such that $\pi_\mathcal{X} \circ \sigma_\mathcal{X}=\operatorname{id}$. This becomes obvious when bringing Equation~\eqref{kummer_middle} into the equivalent form
\beq
\label{kummer_middle_ell_p_W}
\mathcal{X}: \quad y^2  = x  \left( x-  \frac{u \left( u^2 - u \, \Lambda_3  +1 \right)   \left( \Lambda_1-\Lambda_2 \right)}{ \left( \Lambda_2-\Lambda_3 \right)} \right) \, \left( x- \frac{u\left( u^2 - u \, \Lambda_2  +1 \right)    \left( \Lambda_1-\Lambda_3 \right)}{\left( \Lambda_2-\Lambda_3 \right)} \right)  \,.
\eeq
Here, the section $\sigma_\mathcal{X}$ is given by the point at infinity in each fiber, and a 2-torsion section is $\tau_\mathcal{X}: (x,y)=(0,0)$, and $U=l u=\kappa_{1,5}\kappa_{2,3}u$. Using the Kodaira classification for singular fibers of Jacobian elliptic fibrations \cite{MR0184257}, we have the following:
\begin{lemma}
\label{lem:FibShioda}
Equation~\eqref{kummer_middle_ell_p_W} determines a Jacobian elliptic fibration on the Kummer surface $\mathcal{X}=\operatorname{Kum}(\operatorname{Jac} \mathcal{C})$. Generically, the Weierstrass model has two singular fibers of Kodaira type $I_0^*$ at $u=0, \infty$, six singular fibers of type $I_2$, and the Mordell-Weil group of sections $\operatorname{MW}(\mathcal{X}, \pi_\mathcal{X})=(\mathbb{Z}/2\mathbb{Z})^2 \oplus \langle 1 \rangle$.
\end{lemma}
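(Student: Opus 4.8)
The plan is to treat Equation~\eqref{kummer_middle_ell_p_W} as an explicit Weierstrass equation
\[
\mathcal{X}\colon\quad y^{2}=x\,(x-a)(x-b)=x^{3}+a_{2}\,x^{2}+a_{4}\,x\,,\qquad a_{2}=-(a+b),\quad a_{4}=ab,\quad a_{6}=0,
\]
over the base $\mathbb{P}^{1}_{u}$, with
\[
a=\frac{u\,(u^{2}-u\Lambda_{3}+1)(\Lambda_{1}-\Lambda_{2})}{\Lambda_{2}-\Lambda_{3}},\qquad b=\frac{u\,(u^{2}-u\Lambda_{2}+1)(\Lambda_{1}-\Lambda_{3})}{\Lambda_{2}-\Lambda_{3}},
\]
and to read off the singular fibers by Tate's algorithm and the Mordell--Weil group by the Shioda--Tate formula. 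The one computation to carry out by hand is the expansion $a-b=-u\,(u^{2}-u\Lambda_{1}+1)$; together with the trivial identity $a_{2}^{2}-4a_{4}=(a-b)^{2}$ it yields
\[
\Delta=16\,a_{4}^{2}\,(a_{2}^{2}-4a_{4})=16\,a^{2}b^{2}(a-b)^{2}=\frac{16(\Lambda_{1}-\Lambda_{2})^{2}(\Lambda_{1}-\Lambda_{3})^{2}}{(\Lambda_{2}-\Lambda_{3})^{4}}\;u^{6}\prod_{i=1}^{3}\big(u^{2}-u\Lambda_{i}+1\big)^{2}.
\]
The Weierstrass data have degrees $(3,6,0)$, fitting the K3 range $(\le 4,\le 8,\le 12)$; since $\Delta$ vanishes to order at most $6$ at every point of $\mathbb{P}^{1}$ the model is relatively minimal, so $\mathcal{X}$ is a K3 surface with $v_{\infty}(\Delta)=24-18=6$. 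Moreover, for very general $\mathcal{C}$ the three quadratics have six distinct roots, none of them $0$ or $\infty$ (their constant term is $1$).

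Next I would run Tate's algorithm at each zero of $\Delta$. At $u=0$ one has $v_{0}(a_{2})\ge 1$, $v_{0}(a_{4})=2$, $a_{6}=0$, and Tate's auxiliary cubic factors as $T\,(T^{2}+\alpha T+\beta)$ with $\beta=ab/u^{2}\big|_{u=0}=(\Lambda_{1}-\Lambda_{2})(\Lambda_{1}-\Lambda_{3})/(\Lambda_{2}-\Lambda_{3})^{2}\neq 0$ and $\alpha^{2}-4\beta=(a-b)^{2}/u^{2}\big|_{u=0}=1\neq 0$, so the cubic has three distinct roots and the fiber is of Kodaira type $I_{0}^{*}$; the computation at $u=\infty$ is identical after $u\mapsto 1/v$, with $\alpha,\beta$ read off from the leading coefficients of $-a_{2},a_{4}$ in $u$ and again $\alpha^{2}-4\beta=1$, giving a second $I_{0}^{*}$ fiber. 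At each of the six roots $u_{0}$ of $\prod_{i}(u^{2}-u\Lambda_{i}+1)$ exactly one of $a,b$ vanishes, to first order, while $a-b$ does not, so the fiber is a nodal cubic with $v_{u_{0}}(\Delta)=2$, i.e.\ of type $I_{2}$. The Euler numbers of the singular fibers then sum to $2\cdot 6+6\cdot 2=24=e(\mathcal{X})$, which re-confirms the K3 count and shows that for very general $\mathcal{C}$ there are no further singular fibers.

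For the Mordell--Weil group, the generic fiber carries the three rational $2$-torsion points $(0,0)$ (this is $\tau_{\mathcal{X}}$), $(a,0)$, $(b,0)$, pairwise distinct and distinct from $\sigma_{\mathcal{X}}$ because $a,b,a-b$ are nonzero in $\mathbb{C}(u)$; they extend to sections and generate a subgroup $(\mathbb{Z}/2\mathbb{Z})^{2}\subseteq\operatorname{MW}(\mathcal{X},\pi_{\mathcal{X}})$. This is all the torsion: over a field of characteristic zero a nonzero torsion section cannot meet the identity component of an additive fiber (it would otherwise be a torsion point of the additive formal group, hence agree with $\sigma_{\mathcal{X}}$ near that fiber and therefore everywhere), so $\operatorname{MW}(\mathcal{X},\pi_{\mathcal{X}})_{\mathrm{tors}}$ embeds into the component group $(\mathbb{Z}/2\mathbb{Z})^{2}$ of an $I_{0}^{*}$ fiber and must equal it. The free rank follows from Shioda--Tate: for very general $\mathcal{C}$ one has $\rho(\mathcal{X})=\rho\big(\operatorname{Kum}(\operatorname{Jac}\mathcal{C})\big)=17$, while the trivial lattice — generated by $\sigma_{\mathcal{X}}$, a general fiber, and the $2\cdot 4+6\cdot 1=14$ non-identity fiber components — has rank $16$; hence $\operatorname{rank}\operatorname{MW}(\mathcal{X},\pi_{\mathcal{X}})=17-2-14=1$ and $\operatorname{MW}(\mathcal{X},\pi_{\mathcal{X}})\cong(\mathbb{Z}/2\mathbb{Z})^{2}\oplus\mathbb{Z}=(\mathbb{Z}/2\mathbb{Z})^{2}\oplus\langle 1\rangle$.

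I do not anticipate a genuine obstacle: the substance is the discriminant factorization and the two $I_{0}^{*}$ computations, both of which reduce to $(a-b)^{2}=(a+b)^{2}-4ab$ plus the single expansion of $a-b$, together with a careful accounting of which genericity hypotheses on $(\Lambda_{1},\Lambda_{2},\Lambda_{3})$ — equivalently on $\mathcal{C}$ — are actually used, namely that the six $I_{2}$-roots are distinct and that $\rho(\mathcal{X})=17$. The identification of $\mathcal{X}$ with $\operatorname{Kum}(\operatorname{Jac}\mathcal{C})$, carrying $\sigma_{\mathcal{X}}$ as a section, is taken as given, being established in the discussion preceding the lemma.
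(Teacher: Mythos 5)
Your computation is sound and, unlike the paper's proof, is essentially self-contained: the paper identifies the fiber configuration and the torsion directly (as you do, via the factorization $\Delta=16\,a^2b^2(a-b)^2$) but then determines the \emph{full} Mordell--Weil group only by matching the fibration against Kumar's classification of elliptic fibrations on $\operatorname{Kum}(\operatorname{Jac}\mathcal{C})$ in \cite{MR3263663}; you replace that citation by a Shioda--Tate rank count using $\rho=17$ for very general $\mathcal{C}$, which is a perfectly good alternative. One small slip: at the two roots of $u^2-u\Lambda_1+1$ it is neither $a$ nor $b$ that vanishes but $a-b$; the fiber is still $I_2$, since exactly two of the three roots $0,a,b$ of the cubic collide (here $a=b\neq 0$) and $v_{u_0}(\Delta)=2$, but your sentence as written only covers the four roots coming from $\Lambda_2,\Lambda_3$.

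The genuine gap is the last equality $(\mathbb{Z}/2\mathbb{Z})^2\oplus\mathbb{Z}=(\mathbb{Z}/2\mathbb{Z})^2\oplus\langle 1\rangle$. In the paper $\langle 1\rangle$ is not shorthand for $\mathbb{Z}$: it is defined immediately after the lemma as a rank-one lattice whose generator has self-pairing $1$ under the height pairing, so the statement asserts that the free part of $\operatorname{MW}$ is generated by a section of height $1$, and your argument establishes only the rank. You can close this without appealing to \cite{MR3263663} by the determinant formula
$\lvert\det\operatorname{NS}(\mathcal{X})\rvert=\lvert\det T\rvert\cdot\det(\operatorname{MWL})/\lvert\operatorname{MW}_{\mathrm{tor}}\rvert^{2}$,
where $T=U\oplus D_4^{\oplus 2}\oplus A_1^{\oplus 6}$ is the trivial lattice with $\lvert\det T\rvert=16^{2}\cdot 2^{6}=1024$: for a very general principally polarized $\mathsf{A}$ the transcendental lattice of $\operatorname{Kum}(\mathsf{A})$ is $U(2)^{\oplus 2}\oplus\langle-4\rangle$, so $\lvert\det\operatorname{NS}(\mathcal{X})\rvert=64$, and $64=1024\cdot h/16$ forces the height of a generator to be $h=1$. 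Adding that one line (or explicitly exhibiting a section of height one) completes the proof.
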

In the statement above the symbol $\langle m \rangle$ stands for a rank 1 lattice $\mathbb{Z}x$ satisfying $\langle x, x \rangle = m$ with respect to the height pairing.
\begin{proof}
One easily identifies the collection of singular fibers and the torsion part of the Mordell-Weil group. From a comparison with the results in \cite{MR3263663} one then determines the full Mordell-Weil group.
\end{proof}
We make the following:
\begin{remark}
\label{rem:choice_torsion_sections}
The established normal form for the Jacobian elliptic fibration in Equation~(\ref{kummer_middle_ell_p_W}) involves an additional choice: with $(\lambda_4,\lambda_6)=(0, \infty)$ the grouping of the remaining Weierstrass points as $\{ \lambda_1, \lambda_5=1 \}$ and $\{ \lambda_2, \lambda_3 \}$ marks a 2-torsion section, namely $\tau_\mathcal{X}$. This choice is equivalent to selecting $G'$, i.e., one out of three G\"opel groups containing the point $p_{46} \in \operatorname{Jac}(\mathcal{C})[2]$; see Remark~\ref{rem:isog_curves}.
\end{remark}
\subsection{Abelian and Kummer surfaces with \texorpdfstring{$(1,2)$}{(1,2)}-polarization}
\label{ssec:AbSrfc}
Let us also consider abelian surfaces $\mathsf{B}$ with a polarization of type $(d_1,d_2)=(1,2)$ given by an ample symmetric line bundle $\mathscr{V}$ with $\mathscr{V}^2 =4$. 
\par As for principally polarized abelian surfaces, the quotient $\mathsf{B}/\langle -{\rm id} \rangle$ has sixteen ordinary double points and a minimal resolution, denoted by $\operatorname{Kum}(\mathsf{B})$. The double points are again the images of the points of order two $\{ q_0, \dots, q_{15}\}$ on $\mathsf{B}$, and the disjoint rational curves $\{ \mathrm{K}_0, \dots, \mathrm{K}_{15} \}$ are the exceptional divisors introduced in the blow-up process such that $\mathrm{K}_i \circ \mathrm{K}_j = -2 \delta_{i j}$ for $0\le i \le 15$. They are contained in a minimal primitive sub-lattice of the N\'eron-Severi lattice of $\operatorname{Kum}(\mathsf{B})$, known as \emph{Kummer lattice}. In particular, they form an \emph{even set} in the N\'eron-Severi lattice, and the class $\hat{\mathrm{K}} = \frac{1}{2}(\mathrm{K}_0 + \dots + \mathrm{K}_{15} )$ is an element of this lattice with $\hat{\mathrm{K}}^2=-8$. In fact, the N\'eron-Severi lattice $\mathrm{NS}(\operatorname{Kum}\mathsf{B})$ is generated over $\mathbb{Q}$ by the classes $\mathrm{K}_i$, $\hat{\mathrm{K}}$, and one additional class $\mathrm{H}$ with $\mathrm{H}^2=8$ and $\mathrm{H} \circ \mathrm{K}_i=0$ for $0\le i \le 15$.
\par The polarization line bundle $\mathscr{V}$ defines a canonical map $\varphi_{\mathscr{V}}: \mathsf{B} \to \mathbb{P}^{d_1d_2-1}=\mathbb{P}^1$, such that the linear system $|\mathscr{V}|$ is a pencil on $\mathsf{B}$, and each curve in $|\mathscr{V}|$ has self-intersection equal to $4$.  Since we assume $\rho(\mathsf{B})=1$, the abelian surface $\mathsf{B}$ cannot be a product of two elliptic curves or isogenous to a product of two elliptic curves.  It was proven in \cite{MR2062673}*{Prop.~4.1.6, Lemma 10.1.2} that the linear system $|\mathscr{V}|$ has exactly four base points. To characterize these four base points, Barth proved in~\cite{MR946234} that the base points form the group $\operatorname{Tr}(\mathscr{V})=\{p \in \mathsf{B} \mid \, \operatorname{tr}_p^*\mathscr{V}=\mathscr{V}\}\cong (\mathbb{Z}/2\mathbb{Z})^2$ where elements of $\mathsf{B}$ act by translation $\operatorname{tr}_p(x)=x+p$. Thus, the base points have order two on the abelian surface $\mathsf{B}$; we will choose them to be $\{ q_0, q_1, q_2, q_3 \}$. A curve in the pencil $|\mathscr{V}|$ is never singular at any of the base points $\{ q_0, q_1, q_2, q_3 \}$; see \cite{MR2729013}*{Lemma~3.2}. Barth's seminal duality theorem in~\cite{MR946234}  can then be stated as follows:
\begin{theorem}[Barth]
\label{thm:Barth}
In the situation above, let $\mathcal{D} \in |\mathscr{V}|$ be a smooth curve of genus three in the pencil $|\mathscr{V}|$. There exists a bielliptic involution $\jmath$ on $\mathcal{D}$ with degree-two quotient map $\pi: \mathcal{D} \to \mathcal{Q} =\mathcal{D}/\langle \jmath \rangle$ onto an elliptic curve $\mathcal{Q}$ such that $\mathsf{B}$ is naturally isomorphic to the Prym variety $\operatorname{Prym}(\mathcal{D}, \pi )$ and the involution $-{\rm id}$ restrict to $\jmath$.
\par Conversely, if $\mathcal{D}$ is a smooth bielliptic curve of genus three with degree-two quotient map $\pi: \mathcal{D} \to \mathcal{Q} =\mathcal{D}/\langle \jmath \rangle$ then $\mathcal{D}$ is embedded in $\operatorname{Prym}(\mathcal{D}, \pi)$ as a curve of self-intersection four. The Prym variety $\operatorname{Prym}(\mathcal{D}, \pi)$ is an abelian surface with a polarization of type $(1,2)$.
\end{theorem}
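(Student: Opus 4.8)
The plan is to reduce the statement to Barth's analysis in \cite{MR946234}; I sketch the argument I would reconstruct. First I would study the action of the antipodal involution $-\mathrm{id}$ on the two-dimensional space $H^0(\mathsf{B}, \mathscr{V})$. Since $\mathscr{V}$ is symmetric, $-\mathrm{id}$ acts on $H^0(\mathscr{V})$, and for the appropriate normalization of $\mathscr{V}$ Mumford's eigenspace count forces $H^0(\mathscr{V})$ to be entirely $(-\mathrm{id})$-invariant, so $-\mathrm{id}$ acts trivially on $\mathbb{P}(H^0(\mathscr{V})^\vee) \cong \mathbb{P}^1$ and preserves every member of the pencil $|\mathscr{V}|$. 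Consequently, for a smooth $\mathcal{D} \in |\mathscr{V}|$ the restriction $\jmath := (-\mathrm{id})|_{\mathcal{D}}$ is a well-defined involution. Next I would determine its fixed locus $\mathcal{D} \cap \mathsf{B}[2]$: it contains the four base points $q_0, \dots, q_3$ of the pencil, and for a smooth member these are the only $2$-torsion points on $\mathcal{D}$ (a further one would force a singularity of that member). Riemann--Hurwitz for $\pi \colon \mathcal{D} \to \mathcal{Q} := \mathcal{D}/\langle \jmath \rangle$, i.e. $4 = 2\big(2 g(\mathcal{Q}) - 2\big) + 4$, then gives $g(\mathcal{Q}) = 1$, so $\jmath$ is bielliptic and $\mathcal{Q}$ is elliptic.

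For the identification $\mathsf{B} \cong \operatorname{Prym}(\mathcal{D}, \pi)$ I would use the Albanese map. Since $\rho(\mathsf{B}) = 1$ the surface $\mathsf{B}$ is simple, hence $\mathcal{D}$ is not contained in a proper abelian subvariety, and the inclusion $\mathcal{D} \hookrightarrow \mathsf{B}$ induces a surjection $\alpha \colon \operatorname{Jac}(\mathcal{D}) \twoheadrightarrow \mathsf{B}$ with $\alpha \circ \jmath_* = (-\mathrm{id})_{\mathsf{B}} \circ \alpha$. Because $\pi^*\operatorname{Jac}(\mathcal{Q})$ lies in the $\jmath_*$-invariant part of $\operatorname{Jac}(\mathcal{D})$, its image under $\alpha$ is connected and pointwise fixed by $-\mathrm{id}$, hence trivial; thus $\alpha$ factors through $\operatorname{Jac}(\mathcal{D})/\pi^*\operatorname{Jac}(\mathcal{Q})$, and restricting to the complementary abelian subvariety produces an isogeny $\operatorname{Prym}(\mathcal{D}, \pi) \to \mathsf{B}$ of abelian surfaces. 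To promote this to an isomorphism I would compare polarizations: the principal polarization of $\operatorname{Jac}(\mathcal{D})$ restricts to $\operatorname{Prym}(\mathcal{D}, \pi)$ as a polarization of type $(1,2)$ --- the standard computation for a double cover branched at four points with $g(\mathcal{D}) - g(\mathcal{Q}) = 2$ --- and this restricted polarization must correspond to $\mathscr{V}$ under the isogeny; since both sides then carry a $(1,2)$-polarization of self-intersection $4$, the isogeny has degree one. In particular $\mathcal{D} \in |\mathscr{V}|$ has $\mathcal{D}^2 = \mathscr{V}^2 = 4$.

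For the converse I would run the same machinery in reverse: given an abstract smooth bielliptic $\mathcal{D}$ of genus three with quotient $\pi \colon \mathcal{D} \to \mathcal{Q}$ onto an elliptic curve, set $\mathsf{P} := \operatorname{Prym}(\mathcal{D}, \pi)$, an abelian surface; the polarization induced from $\operatorname{Jac}(\mathcal{D})$ again has type $(1,2)$ by the four-branch-point computation. The Abel--Prym map $\mathcal{D} \to \mathsf{P}$, $x \mapsto \big[x - \jmath(x)\big]$ (suitably normalized), is a closed embedding by the standard criterion for Prym embeddings of curves of low genus, and its image lies in the linear system of the $(1,2)$-polarization, hence is a curve of self-intersection $4$. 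The step I expect to be the genuine obstacle --- and the one where I would invoke \cite{MR946234} (together with the expositions in \cites{Garbagnati08, MR3010125}) rather than reprove it --- is exactly the passage from isogeny to isomorphism in the first part, together with the precise determination of the polarization type, which in Barth's treatment is extracted from a careful study of the associated Kummer surface and its $16_6$ configuration; by comparison the dimension count, the equivariance argument, and the bielliptic structure itself are essentially formal.
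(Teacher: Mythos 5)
The paper offers no proof of this statement at all---it is stated as Barth's theorem and attributed to \cite{MR946234}---so your only obligation was to reduce it correctly to that reference, which you do. Your sketch (symmetry of the pencil via the eigenspace count, the four-point fixed locus and Riemann--Hurwitz, the equivariant Albanese argument killing $\pi^*\operatorname{Jac}(\mathcal{Q})$, and the polarization comparison) is a faithful outline of the standard argument, and you correctly isolate the genuinely nontrivial step---promoting the isogeny $\operatorname{Prym}(\mathcal{D},\pi)\to\mathsf{B}$ to an isomorphism and pinning down the $(1,2)$ type---as the part to be imported from Barth, which is precisely what the paper itself does.
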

\par We will denote the exceptional curves associated with the base points on the Kummer surface $\operatorname{Kum}(\mathsf{B})$ by $\{ \mathrm{K}_0, \mathrm{K}_1, \mathrm{K}_2, \mathrm{K}_3\}$.  The map $\varphi_{\mathscr{V}}: \mathsf{B} \to \mathbb{P}^1$ induces a Jacobian elliptic fibration on $\pi_\mathcal{Y}: \mathcal{Y}=\operatorname{Kum}(\mathsf{B}) \to \mathbb{P}^1$ with section $\sigma_\mathcal{Y}$ as follows: first, a fibration is obtained by blowing up the base points of the pencil $|\mathscr{V}|$. The fibers of this fibration are the strict transform of the curves $\mathcal{D} \in |\mathscr{V}|$ and so the general fiber is a smooth curve of genus three. The involution~$\imath$ lifts to an involution on this fibration whose fixed points are the exceptional curves over $\{ q_0, q_1, q_2, q_3 \}$. We then take as the general fiber of $\pi_\mathcal{Y}$ the quotient of the general fiber of $\phi_{\mathscr{V}}$ by the bielliptic involution. Since a curve in the pencil $|\mathscr{V}|$ is never singular at any of the base points $\{ q_0, q_1, q_2, q_3 \}$, we can take as zero-section $\sigma_\mathcal{Y}$ the exceptional curve over $q_0$ such that the divisor class of the section is $[\sigma_\mathcal{Y}]=\mathrm{K}_0$. Garbagnati \cites{Garbagnati08, MR3010125} proved:
\begin{proposition}[Garbagnati]
\label{prop:Garbagnati}
The fibration $\pi_\mathcal{Y}: \mathcal{Y}=\operatorname{Kum}(\mathsf{B}) \to \mathbb{P}^1$ has twelve singular fibers of Kodaira type  $I_2$ and no other singular fibers. The Mordell-Weil group satisfies $\operatorname{MW}(\mathcal{Y}, \pi_\mathcal{Y} )_{\mathrm{tor}}=(\mathbb{Z}/2\mathbb{Z})^2$ and $\operatorname{rank} \operatorname{MW}(\mathcal{Y}, \pi_\mathcal{Y} )=3$. The smooth fiber class $\mathrm{F}$ with $\mathrm{F}^2=0$ and $\mathrm{F} \circ \mathrm{K}_0$=1 is given by
\beq
 \mathrm{F} = \frac{\mathrm{H}-\mathrm{K}_0 - \mathrm{K}_1 - \mathrm{K}_2 - \mathrm{K}_3}{2} \,.
\eeq
\end{proposition}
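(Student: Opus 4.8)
The plan is to produce an explicit Weierstrass model for $\pi_\mathcal{Y}$ and to read the singular fibres and the Mordell--Weil group off from it, using the synthetic description of the fibration recalled above to identify the relevant classes in $\mathrm{NS}(\mathcal{Y})$ first. First I would pin down the fibre class. A general $\mathcal{D}\in|\mathscr{V}|$ is a smooth bielliptic curve of genus three on which $\imath=-{\rm id}$ restricts to the bielliptic involution $\jmath$ with elliptic quotient, and by Riemann--Hurwitz $\jmath|_{\mathcal{D}}$ has exactly four fixed points. Since $\mathcal{D}$ passes through the four base points $q_0,q_1,q_2,q_3$ of the pencil, which are $2$-torsion and so fixed by $\imath$, and since a general $\mathcal{D}$ meets $\mathsf{B}[2]$ in no further point, these four points are precisely the fixed locus of $\jmath|_{\mathcal{D}}$. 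Passing to $\mathcal{Y}=\operatorname{Kum}(\mathsf{B})$, the strict transform $\mathrm{F}$ of $\mathcal{D}/\langle\jmath\rangle$ then meets each of $\mathrm{K}_0,\mathrm{K}_1,\mathrm{K}_2,\mathrm{K}_3$ transversally in one point and is disjoint from $\mathrm{K}_4,\dots,\mathrm{K}_{15}$. Expanding $\mathrm{F}=a\,\mathrm{H}+\sum_{i=0}^{15}b_i\mathrm{K}_i$ in the $\mathbb{Q}$-basis $\{\mathrm{H},\mathrm{K}_0,\dots,\mathrm{K}_{15}\}$ of $\mathrm{NS}(\mathcal{Y})\otimes\mathbb{Q}$, the relations $\mathrm{F}\cdot\mathrm{K}_i=1$ for $i\le 3$, $\mathrm{F}\cdot\mathrm{K}_j=0$ for $j\ge 4$, $\mathrm{F}^2=0$, and $\mathrm{F}\cdot\mathrm{H}>0$ (forced because $\mathrm{F}$ is an effective isotropic class and $\mathrm{H}^{\perp}$ is negative definite) give $b_i=-\tfrac12$ for $i\le 3$, $b_j=0$ for $j\ge 4$ and $a=\tfrac12$, which is the asserted formula $\mathrm{F}=\tfrac12(\mathrm{H}-\mathrm{K}_0-\mathrm{K}_1-\mathrm{K}_2-\mathrm{K}_3)$; it also exhibits $\mathrm{K}_0$ and $\mathrm{K}_1,\mathrm{K}_2,\mathrm{K}_3$ as sections.

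Next I would extract the combinatorial consequences. Each of the twelve pairwise disjoint $(-2)$-curves $\mathrm{K}_4,\dots,\mathrm{K}_{15}$ has $\mathrm{K}_j\cdot\mathrm{F}=\mathrm{K}_j\cdot\mathrm{K}_0=0$, so it lies in a reducible fibre as a non-identity component; the two components of an $I_2$ fibre meet, so no $I_2$ fibre carries two of them, and more generally a fibre carrying $n$ of them has Euler number at least $2n$. As $e(\mathcal{Y})=24=\sum_v e(F_v)$ while $\sum_v n(F_v)=12$, the inequality $\sum_v e(F_v)\ge 2\sum_v n(F_v)=24$ must be an equality, so every fibre carrying some $\mathrm{K}_j$ satisfies $e(F_v)=2\,n(F_v)$ and there are no further singular fibres. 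This still leaves room for coalescences: a configuration such as $10\,I_2+I_4$ also satisfies the Euler-number count and, via the Shioda--Tate relation $\rho(\mathcal{Y})=17=2+\operatorname{rank}\operatorname{MW}(\mathcal{Y},\pi_\mathcal{Y})+\sum_v(m_v-1)$, the resulting rank bound. So the decisive step is to write down the Weierstrass equation of $\pi_\mathcal{Y}$: I would obtain it as in Section~\ref{section2}, from the Abel--Jacobi embedding of the quartic curve attached to $p\in\operatorname{Jac}(\mathcal{C})[2]$ and $G'$ together with the pencil it generates (equivalently, following the computation of \cite{Garbagnati08}), and then check that its discriminant is a degree-$24$ polynomial with twelve double zeroes, all of multiplicative type. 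That rules out every coalescence and forces exactly twelve singular fibres, all of type $I_2$.

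With the configuration $12\,I_2$ in hand, the trivial lattice is $U\oplus A_1^{12}$ of rank $14$, and Shioda--Tate gives $\operatorname{rank}\operatorname{MW}(\mathcal{Y},\pi_\mathcal{Y})=17-2-12=3$. For the torsion I would first exhibit a subgroup isomorphic to $(\mathbb{Z}/2\mathbb{Z})^2$ --- explicitly visible on the Weierstrass model, or among the distinguished rational curves on $\operatorname{Kum}(\mathsf{B})$ arising from the even-eight configurations --- and then bound $\operatorname{MW}(\mathcal{Y},\pi_\mathcal{Y})_{\mathrm{tor}}$ from above using that it embeds into $(U\oplus A_1^{12})^{\vee}/(U\oplus A_1^{12})\cong(\mathbb{Z}/2\mathbb{Z})^{12}$: a torsion section is disjoint from the zero section and has height zero, so its local contributions over the twelve $I_2$ fibres sum to $2\chi(\mathcal{O}_\mathcal{Y})=4$, i.e.\ it is the non-identity component in exactly eight of them, and checking this pattern against the explicit sections (or comparing $|\operatorname{disc}\mathrm{NS}(\mathcal{Y})|$ with $|\operatorname{disc}(U\oplus A_1^{12})|/|\operatorname{MW}(\mathcal{Y},\pi_\mathcal{Y})_{\mathrm{tor}}|^{2}$) pins it to $(\mathbb{Z}/2\mathbb{Z})^2$. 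The hard part will be the determination of the fibre types: the lattice-theoretic data alone --- Euler number, Shioda--Tate, and the sixteen exceptional curves --- leaves a genuine one-parameter ambiguity, so producing and analysing the explicit Weierstrass equation in terms of the Rosenhain parameters is where the real work lies.
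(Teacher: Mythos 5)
Your argument is correct, and it is worth noting that the paper does not actually prove this proposition: it is quoted from Garbagnati, and the verification the paper does carry out happens downstream, in Proposition~\ref{lem:B12}, precisely by exhibiting the explicit Weierstrass model~(\ref{eqn:B12}) (shown in Proposition~\ref{prop:main} to agree with the Abel--Jacobi model~(\ref{eqn:JacFib})), reading off the twelve $I_2$ fibres and the $2$-torsion from its discriminant $\sim A^2B^2(A-B)^2$, and pinning down the Mordell--Weil group by comparing discriminant forms with the transcendental lattice $U(2)\oplus U(2)\oplus\langle-8\rangle$. So your ``decisive step'' is exactly the paper's. What you add, and the paper omits by citing \cite{Garbagnati08}, is the synthetic front end: the Riemann--Hurwitz count identifying the four base points with the fixed locus of $\jmath|_{\mathcal{D}}$, the resulting linear-algebra derivation of $\mathrm{F}=\tfrac12(\mathrm{H}-\mathrm{K}_0-\cdots-\mathrm{K}_3)$, and the Euler-number bookkeeping $\sum_v e(F_v)=24=2\sum_v n(F_v)$. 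That bookkeeping is sound, and your honesty about the residual ambiguity (e.g.\ $10\,I_2+I_4$, or an $I_0^*$ absorbing three of the $\mathrm{K}_j$) is appropriate. One remark: that ambiguity can in fact be excluded without the Weierstrass model, purely lattice-theoretically, which is closer to Garbagnati's own route. For instance, in a putative $I_4$ fibre the fourth component $\mathrm{C}$ satisfies $\mathrm{C}\cdot\mathrm{K}_a=\mathrm{C}\cdot\mathrm{K}_b=1$, $\mathrm{C}\cdot\mathrm{F}=0$ and $\mathrm{C}^2=-2$; expanding $\mathrm{C}=a\mathrm{H}+\sum b_i\mathrm{K}_i$ and solving (keeping track of which of the sections $\mathrm{K}_1,\mathrm{K}_2,\mathrm{K}_3$ could meet $\mathrm{C}$) leads to $s^2-4s+8=0$ for $s=\sum_{i=1}^3 \mathrm{C}\cdot\mathrm{K}_i$, which has no real solution; the $I_0^*$ and higher cases die the same way. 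Either route closes the gap, so your proof stands as written.
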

The twelve non-neutral components of the reducible fibers of Kodaira type  $A_1$ represent the classes $\mathrm{K}_4, \dots, \mathrm{K}_{15}$ of the Kummer lattice and are not intersected by the class of the zero section given by $\mathrm{K}_0$. In fact, the remaining four classes $\mathrm{K}_i$ with $0 \le i\le3$ satisfy $\mathrm{F}\circ \mathrm{K}_i=1$ and $\mathrm{K}_i \circ \mathrm{K}_j=0$ with $4\le j \le 15$. Thus, they represent sections of the Jacobian elliptic fibration which intersect only neutral components of the reducible fibers, given by the divisor classes $\mathrm{F}-\mathrm{K}_j$ with $0 \le i\le3$ and $4\le j \le 15$.
\par We now construct a Weierstrass model for the fibration in Proposition~\ref{prop:Garbagnati} as follows: Mehran proved in \cite{MR2804549} that there are fifteen distinct isomorphism classes of rational double covers $\phi: \mathcal{Y} \dasharrow \mathcal{X}$ of the Kummer surface $\mathcal{X}=\operatorname{Kum}(\mathsf{A})$ associated with the principal polarized abelian surface $\mathsf{A}=\operatorname{Jac}(\mathcal{C})$, such that the preimage is a Kummer surface $\mathcal{Y}=\operatorname{Kum}(\mathsf{B})$ associated with an abelian surface $\mathsf{B}$ with the polarization of type $(1,2)$. Mehran computed that the branching loci giving rise to these 15 distinct isomorphism classes of double covers are even eights of exceptional curves on the Kummer surface $\operatorname{Kum}(\mathsf{A})$  \cite{MR2804549}*{Prop.~4.2}: each even eight is itself enumerated by a point of order two $p_{i j} \in \mathsf{A}[2]$ with $1 \le i < j \le6$, and given as a sum in the N\'eron-Severi lattice of the form
\beq
\label{eqn:MehranEE}
 \Delta_{p_{ij}} = \mathrm{E}_{i1} + \dots + \widehat{\mathrm{E}_{ij}} + \dots + \mathrm{E}_{i6} + \mathrm{E}_{j1} + \dots + \widehat{\mathrm{E}_{ij}} + \dots + \mathrm{E}_{j6} \,,
 \eeq
where $\mathrm{E}_{ii}=0$, and $\mathrm{E}_{i j}$ are the exceptional divisors obtained by resolving the nodes $p_{i j}$; the hat indicates divisors that are not part of the even eight. Moreover, Mehran proved that each rational map $\phi_\Delta: \operatorname{Kum}(\mathsf{B}) \dashrightarrow \operatorname{Kum}(\mathsf{A})$ branched on such an even eight $\Delta$ is induced by an isogeny $\Phi_\Delta:  \mathsf{B} \to \mathsf{A}$ of abelian surfaces of  degree two and vice versa~\cite{MR2804549}.  We call such an isogeny $\Phi$ a \emph{$(1,2)$-isogeny}. We have the following:
\begin{remark}
\label{rem:EE}
In terms of the $16_6$-configuration, the zero section $\sigma_\mathcal{X}$ of the elliptic fibration in Lemma~\ref{lem:FibShioda} and the 2-torsion section $\tau_\mathcal{X}: (x,y)=(0,0)$ are identified with the tropes $\mathrm{T}_5=\mathrm{T}_{56}$ and $\mathrm{T}_1=\mathrm{T}_{16}$, respectively. The eight non-central components of the two reducible fibers of type $D_4$ in the elliptic fibration in Equation~\eqref{kummer_middle_ell_p_W}  form the even eight $\Delta_{46}$ on $\operatorname{Kum}(\operatorname{Jac}\mathcal{C})$, consisting of the exceptional divisors for the nodes $\{ p_{i4} \}$ and $\{ p_{i6} \}$ with $i= 1, 2, 3, 5$. Their central components are the tropes $\mathrm{T}_4=\mathrm{T}_{46}$ and $\mathrm{T}_6=\mathrm{T}_{66}$ since the fibers are located over $u=0$ and $u=\infty$, respectively. There are exactly six more exceptional divisors from nodes that occur as components of reducible fibers; see \cites{MR2804549, MR2306633}. The situation is depicted in Figure~\ref{fig:fib1}.
\end{remark}
\begin{figure}
  \includegraphics[width=0.9\linewidth]{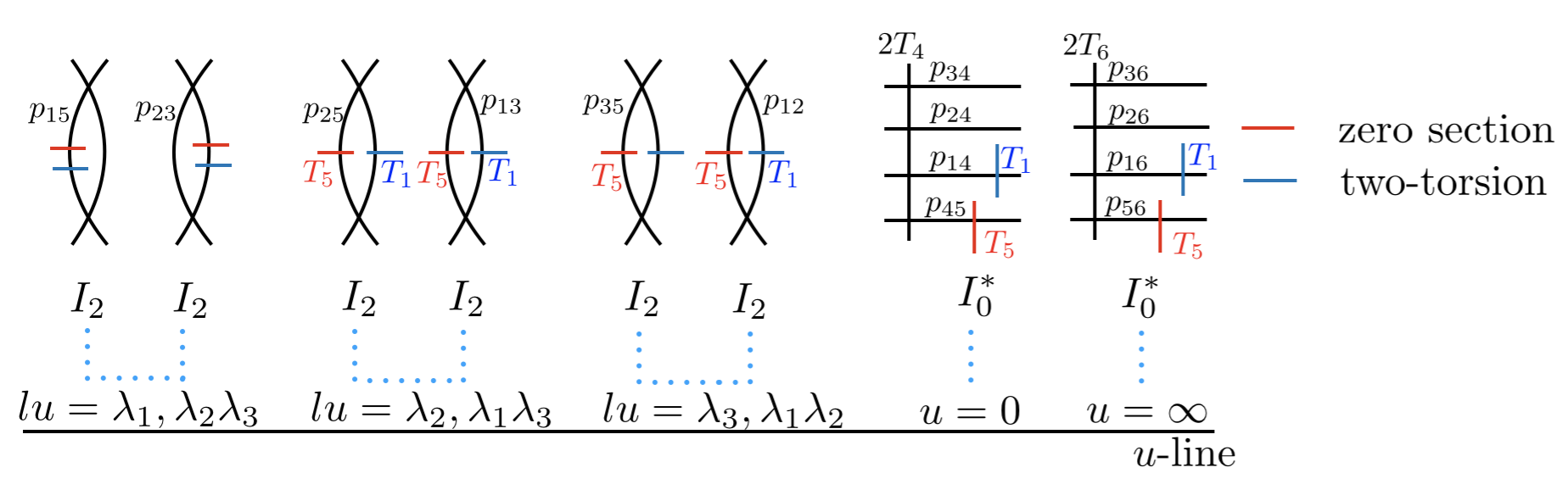}
  \caption{Reducible fibers in Lemma~\ref{lem:FibShioda}}
  \label{fig:fib1}
\end{figure}%
\par Let $\mathcal{Y}=\operatorname{Kum}(\mathsf{B}_{p_{46}})$ be the Kummer surface associated with the abelian surface $\mathsf{B}_{p_{46}}$ with the polarization of type $(1,2)$ induced by
the even eight $\Delta_{p_{46}}$. That is, let $\mathcal{Y}$ be the preimage of the rational double cover $\phi_{\Delta_{p_{46}}}: \mathcal{Y} \dasharrow \mathcal{X}=\operatorname{Kum}(\operatorname{Jac} \mathcal{C})$ branched on the even eight $\Delta_{p_{46}} \subset \operatorname{NS}(\mathcal{X})$. Because of Remark~\ref{rem:EE} the degree-two rational map $\phi_{\Delta_{p_{46}}}$ is induced by the double cover of $\mathbb{P}^1$ branched over $u=0$ and $u=\infty$. We then have
\beq
\label{eqn:phi}
 \phi_{\Delta_{p_{46}}}: \mathcal{Y}\dasharrow \mathcal{X}\,, \quad (v, X, Y)  \mapsto (u, x, y)=(v^2, v^2X, v^3Y) \,.
 \eeq
Accordingly, a Weierstrass equation for $\mathcal{Y}$ is immediately found to be
\beq
\label{eqn:B12}
\mathcal{Y}: \quad Y^2  = X  \left( X-  \frac{ \left( v^4 - v^2  \Lambda_3  +1 \right)   \left( \Lambda_1-\Lambda_2 \right)}{ \left( \Lambda_2-\Lambda_3 \right)} \right)  \left( X- \frac{\left( v^4 - v^2 \Lambda_2  +1 \right)    \left( \Lambda_1-\Lambda_3 \right)}{\left( \Lambda_2-\Lambda_3 \right)} \right),
\eeq
with zero section $\sigma_\mathcal{Y}$ and a 2-torsion section $\tau_\mathcal{Y}: (X,Y)=(0,0)$. According to Mehran's result, there is a corresponding isogeny $\Phi_{\Delta_{p_{46}}}:  \mathsf{B}_{p_{46}} \to \mathsf{A}$ which induces $\phi_{\Delta_{p_{46}}}$. We have the following:
\begin{proposition}
\label{lem:B12}
Equation~\eqref{eqn:B12} determines the Jacobian elliptic fibration on the Kummer surface $\mathcal{Y}=\operatorname{Kum}(\mathsf{B}_{p_{46}})$ from Proposition~\ref{prop:Garbagnati}. Generically, the Weierstrass model has 12 singular fibers of Kodaira type $I_2$, and the Mordell-Weil group $\operatorname{MW}(\mathcal{Y}, \pi_\mathcal{Y})=(\mathbb{Z}/2\mathbb{Z})^2 \oplus \langle 1 \rangle^{\oplus 2} \oplus \langle 2 \rangle$.
\end{proposition}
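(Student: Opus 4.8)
The plan is to verify that \eqref{eqn:B12} realizes the fibration of Proposition~\ref{prop:Garbagnati} and then to refine the Mordell--Weil data. First I would observe that \eqref{eqn:B12} is obtained by substituting the degree-two map \eqref{eqn:phi} into the Weierstrass equation \eqref{kummer_middle_ell_p_W} and cancelling the common factor $v^6$; one checks directly that the result is already a \emph{minimal} Weierstrass model of a $K3$ elliptic surface over $\mathbb{P}^1_v$ (after absorbing the quadratic terms the linear coefficient has degree $8$ in $v$, and at $v=0$ and $v=\infty$ no simultaneous vanishing forces a blow-down). By Remark~\ref{rem:EE} the even eight $\Delta_{p_{46}}$ consists of the eight non-central components of the two $I_0^*$ fibres of \eqref{kummer_middle_ell_p_W}, which lie over $u=0$ and $u=\infty$; hence the rational double cover $\phi_{\Delta_{p_{46}}}$ is precisely the one induced by $v\mapsto u=v^2$ on the base, and the elliptic fibration carried by \eqref{eqn:B12} is the pull-back along $v\mapsto v^2$ of the Shioda fibration of $\operatorname{Kum}(\operatorname{Jac}\mathcal{C})$. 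Since $\phi_{\Delta_{p_{46}}}$ is, by Mehran's theorem \cite{MR2804549}, induced by the $(1,2)$-isogeny $\Phi_{\Delta_{p_{46}}}\colon \mathsf{B}_{p_{46}}\to\operatorname{Jac}\mathcal{C}$, and the fibration of $\operatorname{Kum}(\mathsf{B})$ in Proposition~\ref{prop:Garbagnati} is the one induced by $\varphi_{\mathscr{V}}$, it suffices to check that \eqref{eqn:B12} reproduces the singular fibres and Mordell--Weil group recorded there; this both identifies the two fibrations and upgrades the statement.

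Second, I would read off the singular fibres. Writing $\alpha(v)=(v^4-\Lambda_3 v^2+1)(\Lambda_1-\Lambda_2)/(\Lambda_2-\Lambda_3)$ and $\beta(v)=(v^4-\Lambda_2 v^2+1)(\Lambda_1-\Lambda_3)/(\Lambda_2-\Lambda_3)$ for the two non-zero roots in \eqref{eqn:B12}, one finds $\alpha-\beta=-(v^4-\Lambda_1 v^2+1)$, so the discriminant equals, up to a non-zero constant,
\[
  (v^4-\Lambda_1 v^2+1)^2\,(v^4-\Lambda_2 v^2+1)^2\,(v^4-\Lambda_3 v^2+1)^2.
\]
Each factor is an even quartic with non-zero constant and leading coefficient, so for very general parameters these twelve roots in $v$ are pairwise distinct, none is $0$, and the discriminant vanishes to order exactly $2$ at each of them while $\alpha$, $\beta$, $\alpha-\beta$ are otherwise non-zero; hence each of these twelve fibres has Kodaira type $I_2$, the fibre at $v=0$ is smooth, and, the total discriminant degree being $24$, so is the fibre at $v=\infty$ --- matching Proposition~\ref{prop:Garbagnati}. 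The sections $(X,Y)=(0,0),(\alpha,0),(\beta,0)$ together with the zero section form $(\mathbb{Z}/2\mathbb{Z})^2\subseteq\operatorname{MW}_{\mathrm{tor}}$; since odd torsion and $4$-torsion are incompatible with all reducible fibres being of type $I_2$, and since a further factor of $2$ is excluded by Proposition~\ref{prop:Garbagnati}, this is all of $\operatorname{MW}_{\mathrm{tor}}$.

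Third --- where the real work lies --- I would determine the Mordell--Weil lattice. Shioda--Tate already gives free rank $3$. The structural input is that \eqref{eqn:B12} is the pull-back of \eqref{kummer_middle_ell_p_W} under the degree-two base change $v\mapsto v^2$ branched at $u=0,\infty$; consequently the deck involution $v\mapsto-v$ acts on $\operatorname{MW}(\mathcal{Y})$, its free part decomposing over $\mathbb{Q}$ as the invariant line --- the image of $\operatorname{MW}(\mathcal{X})_{\mathrm{free}}=\langle 1\rangle$ from Lemma~\ref{lem:FibShioda} --- plus an anti-invariant rank-two piece, which is the free Mordell--Weil group of the quadratic twist of \eqref{kummer_middle_ell_p_W} by the branch divisor. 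That twist is a rational elliptic surface: its two $I_0^*$ fibres are replaced by smooth fibres while the six $I_2$ fibres persist. I would then (i) compute the height of the pull-back $\bar S$ of the generator $S$ of $\operatorname{MW}(\mathcal{X})_{\mathrm{free}}$ from the height-pairing formula, using that the $I_0^*$ contributions vanish and that each $I_2$ fibre of $\mathcal{X}$ at which $S$ meets a non-neutral component gives two such fibres of $\mathcal{Y}$, obtaining $\langle\bar S,\bar S\rangle=2$ and hence the summand $\langle 2\rangle$; (ii) exhibit two explicit sections spanning the anti-invariant part and read off from the twisted model the Gram matrix $\operatorname{diag}(1,1)$ --- equivalently, invoke the classification of Mordell--Weil lattices of rational elliptic surfaces for the configuration $6A_1$; and (iii) show that these three sections generate the full free Mordell--Weil group, rather than a proper finite-index sublattice, by the Shioda--Tate discriminant identity for $\operatorname{NS}(\operatorname{Kum}\mathsf{B})$ (computed from the Kummer lattice together with the class $\mathrm{H}$ with $\mathrm{H}^2=8$, $\mathrm{H}\cdot\mathrm{K}_i=0$) or by comparison with the classification of elliptic fibrations on such Kummer surfaces in \cite{MR3263663}. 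The principal obstacle is exactly step (iii), together with tracking precisely which fibre components the relevant sections pass through, so that the height computations in (i) and (ii) are valid and the lattice is seen to be generated, not merely spanned up to finite index.
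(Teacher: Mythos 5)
Your plan coincides with the paper's at the two endpoints --- a direct identification of the twelve $I_2$ fibres and of the $(\mathbb{Z}/2\mathbb{Z})^2$ torsion, followed by a discriminant comparison to control the index of the known sections inside the full Mordell--Weil group --- but your middle step is genuinely different and more self-contained. The paper simply cites \cite{MR3995925} and \cite{CMS:2019} for three pairwise orthogonal non-torsion sections of minimal height; you instead produce the rank-three part structurally, splitting $\operatorname{MW}(\mathcal{Y})\otimes\mathbb{Q}$ under the deck involution of the base change $v\mapsto v^2$ into the pull-back of $\operatorname{MW}(\mathcal{X})_{\mathrm{free}}=\langle 1\rangle$ (height doubling gives $\langle 2\rangle$) and the Mordell--Weil group of the quadratic twist, a rational elliptic surface with trivial lattice $A_1^{\oplus 6}$. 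Your fibre computation is correct (in particular $\alpha-\beta=-(v^4-\Lambda_1v^2+1)$, whence the discriminant you display). Two cautions on the twist: the Oguiso--Shioda entry for $A_1^{\oplus 6}$ with torsion $(\mathbb{Z}/2\mathbb{Z})^2$ is $\langle 1/2\rangle^{\oplus 2}$, which becomes $\langle 1\rangle^{\oplus 2}$ only after the factor-two rescaling incurred in pulling the twist back to $\mathcal{Y}$; and \cite{MR3263663} classifies fibrations on $\operatorname{Kum}(\operatorname{Jac}\mathcal{C}')$, a principally polarized Jacobian Kummer, so it is not available for your step (iii) --- the input actually needed there is Garbagnati's computation $T(\operatorname{Kum}\mathsf{B})\cong U(2)^{\oplus 2}\oplus\langle -8\rangle$, which is what the paper's proof invokes.

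The genuine difficulty is that the discriminant identity you reserve for step (iii) does not close the argument in the form proposed. From $|\det\bigl(U(2)^{\oplus 2}\oplus\langle-8\rangle\bigr)|=2^7$ one gets $|\det\operatorname{NS}(\operatorname{Kum}\mathsf{B})|=2^7$, while Shioda--Tate for a fibration with trivial lattice $U\oplus A_1^{\oplus 12}$ and torsion $(\mathbb{Z}/2\mathbb{Z})^2$ gives $|\det\operatorname{NS}|=2^{12}\cdot\det(\mathrm{MWL})/2^4$, forcing $\det(\mathrm{MWL})=1/2$. The lattice $\langle 1\rangle^{\oplus 2}\oplus\langle 2\rangle$ spanned by your three sections has determinant $2$, so the identity shows that it sits with index two inside the full Mordell--Weil lattice rather than equalling it. (The same formula applied to Lemma~\ref{lem:FibShioda} and to Proposition~\ref{lem:XX} reproduces $2^6$ on the nose in both cases, so this is not an artifact of conventions.) Consequently step (iii) cannot terminate with the stated lattice: you would have to exhibit the index-two overlattice explicitly, or else interpret the statement as describing the subgroup generated by the named sections. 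This is the one point at which your plan --- and, it should be said, the ``agreement'' asserted without computation in the paper's own proof --- does not amount to a proof of the proposition as written.
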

\begin{proof}
One easily identifies the collection of singular fibers and the torsion part of the Mordell-Weil group. A complete set of generators for the Mordell-Weil group was provided in \cites{MR3995925, CMS:2019}. In \cites{MR3995925} three pairwise orthogonal, non-torsion sections of the elliptic fibration $(\pi_\mathcal{Y}, \sigma_\mathcal{Y})$ of minimal height were constructed that generate a rank-three sub-lattice of the Mordell-Weil group of sections. It was proved in \cite{Garbagnati08}*{Prop.~\!2.2.4} that the transcendental lattice of the Kummer surface $\operatorname{Kum}(\mathsf{B})$ with polarization of type $(1,2)$ is isometric to $U(2)\oplus U(2) \oplus \langle -8 \rangle$ and  the determinant of the discriminant form equals $2^7$ where $U$ is the standard rank-two hyperbolic lattice. This is in agreement with the determinant of the discriminant form for the N\'eron-Severi lattice obtained from an elliptic fibration with section, twelve singular fibers of Kodaira type  $I_2$, and a Mordell-Weil group of sections $(\mathbb{Z}/2\mathbb{Z})^2 \oplus \langle 1 \rangle^{\oplus 2} \oplus \langle 2 \rangle$.
\end{proof}
\subsection{Kummer surfaces with \texorpdfstring{$(2,2)$}{(2,2)}-polarization}
On $\mathcal{Y}=\operatorname{Kum}(\mathsf{B}_{p_{46}})$ we can construct another even eight of exceptional curves $\Delta'$ as follows: the fibration in Equation~(\ref{eqn:B12}) has eight reducible fibers of type $A_1$ where the 2-torsion section $\tau_\mathcal{Y}: (X,Y)=(0,0)$ intersects the non-neutral component, i.e., the component of the fiber not met by the zero-section $\sigma_\mathcal{Y}$. These divisors from an even eight which is precisely the even eight $\Delta'=\Delta_{G'}$ determined by the G\"opel group $G'=\{ 0, p_{15}, p_{23}, p_{46} \} \subset \operatorname{Jac}(\mathcal{C})[2]$, namely the union of the non-neutral components of the preimages of the four reducible $A_1$-fibers in the fibration~(\ref{kummer_middle_ell_p_W}) on $\operatorname{Kum}(\operatorname{Jac} \mathcal{C})$ not containing $p_{15}$, $p_{23}$ under the double cover $\phi_{\Delta_{p_{46}}}: \mathcal{Y} \dasharrow \mathcal{X}=\operatorname{Kum}(\operatorname{Jac} \mathcal{C})$; see Figure~\ref{fig:fib1}.  We then obtain a new K3 surface $\mathcal{X}'$ as the preimage of the rational double cover $\phi_{\Delta_{G'}}: \mathcal{X}' \dasharrow \mathcal{Y}$ branched on $\Delta_{G'} \subset \operatorname{NS}(\mathcal{Y})$. Since the even eight consists only of non-neutral components of reducible fibers, the new K3 surface $\mathcal{X}'$ carries an induced elliptic fibration with section and 2-torsion section. In fact, using the results in \cite{MR3995925} a Weierstrass model for $\mathcal{X}'$ is found to be
\begin{equation}
\label{XXsurface}
\begin{split}
 & \qquad  \mathcal{X}': \qquad  \, y^2  = x^3  + (v^2 + v^{-2} - \Lambda_1)^2 \, v^4 x \\
+&\; \frac{(2\Lambda_1-\Lambda_2-\Lambda_3)(v^2+v^{-2}) + 2 \Lambda_2\Lambda_3 -\Lambda_1\Lambda_2-\Lambda_1\Lambda_3}{\Lambda_2-\Lambda_3} \, v^2x^2\,,
\end{split} 
\end{equation}
with zero section $\sigma_{\mathcal{X}'}$ and 2-torsion section $\tau_{\mathcal{X}'}: (x,y)=(0,0)$.  Thus, we are in the situation where both K3 surfaces $\mathcal{X}'$ and $\mathcal{Y}$ are endowed with  Jacobian elliptic  fibrations which, in addition to trivial sections, each carry a section that makes an element of order two in the Mordell-Weil group. Fiberwise translations by these 2-torsion sections are then known to define involutions $\imath_{\mathcal{X}'}$ on $\mathcal{X}'$ and $\imath_\mathcal{Y}$ on $\mathcal{Y}$, respectively, called \emph{van~Geemen-Sarti involutions} \cites{MR2274533,MR2824841}. The involutions are special Nikulin involutions, and from the Nikulin construction we obtain a pair of dual geometric 2-isogenies between $\mathcal{X}'$ and $\mathcal{Y}$:
\Beq
\label{isog_intro}
 \xymatrix 
{ \mathcal{Y} \ar @(dl,ul) ^{\imath_{\mathcal{Y}}}
\ar @/_0.5pc/ @{-->} _{\phi''} [rr] &
& \mathcal{X}' \ar @(dr,ur) _{\imath_{\mathcal{X}'}}
\ar @/_0.5pc/ @{-->} _{\phi'} [ll] \\
} 
\Eeq
We have the following:
\begin{proposition}
\label{lem:XX}
Equation~\eqref{XXsurface} determines a Jacobian elliptic fibration on the Kummer surface $\mathcal{X}'=\operatorname{Kum}(\operatorname{Jac} \mathcal{C}')$ for $\mathcal{C}'$ given in Proposition~\ref{prop:isog_genus2_curve}. Generically, the Weierstrass model has 4 singular fibers of Kodaira type $I_4$, 8 singular fibers of type $I_1$, and the Mordell-Weil group $\operatorname{MW}(\mathcal{X}', \pi_{\mathcal{X}'})=\mathbb{Z}/2\mathbb{Z} \oplus \langle 1 \rangle^{\oplus 3}$. 
\end{proposition}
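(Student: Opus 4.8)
The plan is to treat \eqref{XXsurface} first as an abstract Jacobian elliptic K3 surface, compute its singular fibres and torsion by Tate's algorithm, then identify it with $\operatorname{Kum}(\operatorname{Jac}\mathcal{C}')$ for the explicit curve of Proposition~\ref{prop:isog_genus2_curve}, and finally extract the Mordell--Weil group by a Shioda--Tate and discriminant count. Expanding the coefficients of \eqref{XXsurface} one writes it as $y^2 = x\,(x^2 + a_2\,x + a_4)$ with $a_4 = (v^4 - \Lambda_1 v^2 + 1)^2$ — a perfect square of a quartic — and $a_2 = \big[(2\Lambda_1-\Lambda_2-\Lambda_3)(v^4+1) + (2\Lambda_2\Lambda_3-\Lambda_1\Lambda_2-\Lambda_1\Lambda_3)v^2\big]/(\Lambda_2-\Lambda_3)$, so that $(\pi_{\mathcal{X}'},\sigma_{\mathcal{X}'})$ is a Jacobian elliptic fibration over $\mathbb{P}^1_v$ with $\deg a_2=4$, $\deg a_4=8$, hence a K3, and $\tau_{\mathcal{X}'}=(0,0)$ is a $2$-torsion section; the $v\mapsto 1/v$ symmetry of the data is manifest. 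The discriminant $\Delta = 16\,a_4^2\,(a_2^2-4a_4)$ has degree $24$; for very general $\mathcal{C}$ the quartic $v^4-\Lambda_1v^2+1$ has four distinct roots, none of them a root of $a_2$, and $a_2^2-4a_4$ is squarefree of degree eight and coprime to $v^4-\Lambda_1 v^2+1$, so at each root of the quartic $\Delta$ vanishes to order $4$ with $c_4=16(a_2^2-3a_4)\neq 0$, i.e.\ the reduction is multiplicative of type $I_4$, while the eight simple roots of $a_2^2-4a_4$ give fibres of type $I_1$. Since $a_4(0)=1$ and, by the $v\leftrightarrow 1/v$ symmetry, the fibres over $v=0,\infty$ are smooth, the configuration is exactly $4\,I_4+8\,I_1$, whose Euler numbers add to $24$, reconfirming the K3 property. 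A short halving computation — $(0,0)$ is not $2$-divisible in $\operatorname{MW}$ because neither $2(v^4-\Lambda_1v^2+1)\pm a_2$ is a square, $a_2^2-4a_4$ is not a square so there is no second $2$-torsion section, and there is no odd torsion for generic $j$-map — gives $\operatorname{MW}(\mathcal{X}',\pi_{\mathcal{X}'})_{\mathrm{tor}}=\mathbb{Z}/2\mathbb{Z}$.

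The step I expect to be the main obstacle is the identification of $\mathcal{X}'$ with $\operatorname{Kum}(\operatorname{Jac}\mathcal{C}')$ for the specific $\mathcal{C}'$ of Proposition~\ref{prop:isog_genus2_curve}, not merely for some curve in the isogeny class. On the abelian-surface side this is forced: by Mehran's correspondence for even eights the surface $\mathcal{Y}=\operatorname{Kum}(\mathsf{B}_{p_{46}})$ of Proposition~\ref{lem:B12} is induced by a $(1,2)$-isogeny $\mathsf{B}_{p_{46}}\to\operatorname{Jac}\mathcal{C}$, and $\mathcal{X}'$ — the double cover of $\mathcal{Y}$ branched on the even eight $\Delta_{G'}$, equivalently the van~Geemen--Sarti partner \eqref{isog_intro} of $\mathcal{Y}$ — is induced by a $2$-isogeny $\operatorname{Jac}\mathcal{C}'\to\mathsf{B}_{p_{46}}$ whose composition with the previous one is the $(2,2)$-isogeny $\operatorname{Jac}\mathcal{C}'\to\operatorname{Jac}\mathcal{C}$ dual to $\Psi'$, so that $\operatorname{Jac}\mathcal{C}'=\operatorname{Jac}\mathcal{C}/G'$ and Proposition~\ref{prop:isog_genus2_curve} determines $\mathcal{C}'$. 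To make this unconditional on the K3 side I would perform a $2$-neighbour step on \eqref{XXsurface} to a fibration of Shioda-sextic type as in Equation~\eqref{kummer_middle_ell_p_W} — two fibres of type $I_0^*$ and six of type $I_2$ — now attached to $\mathcal{C}'$, read off its $\Lambda$-parameters, and confirm by a comparison of Igusa--Clebsch invariants, exactly as in the proof of Proposition~\ref{prop:isog_genus2_curve}, that the underlying genus-two curve is \eqref{eqn:curveG2_nf}. The explicit coordinate change of this $2$-neighbour step, together with the invariant comparison, is the laborious ingredient; alternatively one may simply quote this identification from \cites{MR3995925, CMS:2019}, where the Weierstrass model \eqref{XXsurface} is derived.

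Granting $\mathcal{X}'=\operatorname{Kum}(\operatorname{Jac}\mathcal{C}')$, its transcendental lattice is $U(2)^{\oplus2}\oplus\langle-4\rangle$, so $\rho(\mathcal{X}')=17$ and $|\!\det\operatorname{NS}(\mathcal{X}')|=64$. Shioda--Tate with trivial lattice $U\oplus A_3^{\oplus4}$ (one $A_3$ per $I_4$ fibre) then gives $\operatorname{rank}\operatorname{MW}(\mathcal{X}',\pi_{\mathcal{X}'}) = 17-(2+4\cdot 3)=3$, and Shioda's discriminant formula
\[
 |\!\det\operatorname{NS}(\mathcal{X}')| \;=\; \frac{|\!\det(U\oplus A_3^{\oplus4})|\cdot|\!\det(\mathrm{MWL})|}{|\operatorname{MW}_{\mathrm{tor}}|^2} \;=\; 64\,|\!\det(\mathrm{MWL})|
\]
forces the positive-definite rank-three Mordell--Weil lattice to be unimodular, hence $\mathrm{MWL}\cong\langle1\rangle^{\oplus3}$ and $\operatorname{MW}(\mathcal{X}',\pi_{\mathcal{X}'})=\mathbb{Z}/2\mathbb{Z}\oplus\langle1\rangle^{\oplus3}$. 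The same conclusion follows more directly from the three pairwise orthogonal height-one sections exhibited in \cites{MR3995925, CMS:2019}, which together with $\tau_{\mathcal{X}'}$ one checks to generate $\operatorname{MW}$.
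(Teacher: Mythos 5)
Your argument is correct in outline, but it takes a genuinely different route from the paper. The paper's own proof is essentially a table look-up: it substitutes the explicit relations between the Rosenhain moduli $\Lambda_1,\Lambda_2,\Lambda_3$ of $\mathcal{C}$ and those of $\mathcal{C}'$ (computed in \cite{Clingher:2018aa}) into Equation~\eqref{XXsurface} and recognizes the result as fibration (7) in the classification of all elliptic fibrations on a generic Jacobian Kummer surface in \cite{MR3263663}*{Thm.~2}; the configuration $4\,I_4+8\,I_1$ and the Mordell--Weil group are then read off from that classification, which simultaneously certifies that the surface is $\operatorname{Kum}(\operatorname{Jac}\mathcal{C}')$ for the specific $\mathcal{C}'$ of Proposition~\ref{prop:isog_genus2_curve}. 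You instead compute the fibers directly from the discriminant --- the observation that $a_4$ is the square of the separable quartic $v^4-\Lambda_1v^2+1$ is exactly the right one and cleanly yields $4\,I_4+8\,I_1$ --- argue the identification through the Mehran/van~Geemen--Sarti isogeny chain, and extract $\operatorname{MW}$ from Shioda--Tate plus the discriminant formula. Your approach buys independence from Kumar's table for the fiber types, the rank, and the torsion; its cost is that the identification with the specific curve of Proposition~\ref{prop:isog_genus2_curve}, which you rightly flag as the crux, still rests either on the unperformed $2$-neighbour computation or on the same external sources the paper leans on. Two small caveats: the step ``determinant one, positive definite, rank three, hence $\langle 1\rangle^{\oplus 3}$'' needs integrality of the Mordell--Weil lattice, which is not automatic when $I_4$ fibers contribute correction terms in $\tfrac{1}{4}\mathbb{Z}$; and the torsion bound requires actually verifying that $a_2\pm 2(v^4-\Lambda_1v^2+1)$ and $a_2^2-4a_4$ are non-squares. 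Both gaps are closed by your fallback to the three explicit pairwise-orthogonal height-one sections of \cites{MR3995925,CMS:2019}, which is in fact the style of argument the paper uses for the companion statement in Proposition~\ref{lem:B12}.
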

\begin{proof}
Rosenhain moduli $\Lambda'_1, \Lambda'_2, \Lambda'_3$ for the curve of genus two $\mathcal{C}'$ in Proposition~\ref{prop:isog_genus2_curve} were computed as rational functions of the moduli $\Lambda_1, \Lambda_2, \Lambda_3$ of $\mathcal{C}$ and vice versa in \cite{Clingher:2018aa}. Substituting these relations into Equation~\eqref{XXsurface}, one recovers the Weierstrass model of the elliptic fibration {\tt (7)} in the list of all elliptic fibrations on $\operatorname{Kum}(\operatorname{Jac} \mathcal{C}')$ in~\cite{MR3263663}*{Thm.~2}.
\end{proof}
\par We have the following:
\begin{remark}
In the situation above, it follows $\phi'=\phi_{\Delta_{G'}}$, i.e., the rational double cover branched on $\Delta_{G'} \subset \operatorname{NS}(\mathcal{Y})$ is precisely the 2-isogeny covered by the van~Geemen-Sarti involution $\imath_{\mathcal{X}'}$. On the one hand, the even eight $\Delta'=\Delta_{G'}$ determined by the G\"opel group $G'=\{ 0, p_{15}, p_{23}, p_{46} \}  \leqslant  \operatorname{Jac}(\mathcal{C})[2]$ as the union of the non-neutral components in the preimages of the four reducible $A_1$-fibers in the fibration~(\ref{kummer_middle_ell_p_W}) on $\mathcal{X}=\operatorname{Kum}(\operatorname{Jac} \mathcal{C})$ not containing $p_{15}$, $p_{23}$ under $\phi_{\Delta_{p_{46}}}: \mathcal{Y} \dasharrow \mathcal{X}$. On the other hand, the van~Geemen-Sarti involution $\imath_{\mathcal{X}'}$ was the fiberwise translation by the 2-torsion section $\tau_\mathcal{Y}: (X,Y)=(0,0)$ in the fibration~(\ref{eqn:B12}) on $\mathcal{Y}=\operatorname{Kum}(\mathsf{B})$ which in turn was determined by the G\"opel group $G'$ as well; see Remark~\ref{rem:choice_torsion_sections}.
\end{remark}
\par We also make the following:
\begin{remark}
\label{rem:choice_even_eight}
It was shown in \cite{Clingher:2017aa} that the dual isogeny $\phi''$ in Equation~(\ref{isog_intro}) is branched on the even eight of exceptional curves $\Delta_{p'_{46}} \subset \operatorname{NS}(\operatorname{Kum}(\operatorname{Jac} \mathcal{C}'))$. Accordingly, $\mathcal{Y}$ is the Kummer surface associated with two different abelian surfaces with a polarization of type $(1,2)$. One is obtained from the double cover of $\operatorname{Kum}(\operatorname{Jac}\mathcal{C})$ branched on $\Delta_{p_{46}}$, the other from the double cover of $\operatorname{Kum}(\operatorname{Jac}\mathcal{C}')$ branched on $\Delta_{p'_{46}}$. Thus, we have $\mathcal{Y} \cong \operatorname{Kum}(\mathsf{B}_{p_{46}}) \cong \operatorname{Kum}(\mathsf{B}_{p'_{46}})$.
\end{remark}
\section{Abel-Jacobi map, canonical curves, and associated K3 surfaces}
\label{section2}
In this section we will construct a pencil of plane, bielliptic curves of genus three and its induced genus-one fibration from the Abel-Jacobi map of a single smooth quartic curve. We then show that the obtained genus-one fibration always admits four rational sections and is isomorphic to a Jacobian elliptic fibration on a K3 surface of Picard-rank 17. We also prove certain properties for special members of the pencil and the close relation to a linear system of quadrics in $\mathbb{P}^4$.
\subsection{The Abel-Jacobi map}
\label{ssec:AJM}
Let $\mathcal{Q}$ be a smooth curve of genus one given by the quartic equation $w^2 = P(x) = \sum_{i=0}^4 a_i x^{4-i}$, using the affine coordinates $(x, w) \in \mathbb{C}^2$.  Given a point $(x_0,-w_0) \in \mathcal{Q}$ we consider the Abel-Jacobi map $J_{(x_0,-w_0)} : \mathcal{Q} \to \operatorname{Jac}(\mathcal{Q})$ which relates the algebraic curve $\mathcal{Q}$ to its Jacobian variety $\operatorname{Jac}(\mathcal{Q})$, i.e., an elliptic curve. A classical result due to Hermite states that $\operatorname{Jac}(\mathcal{Q}) \cong \mathcal{E}$ where $\mathcal{E}$ is the elliptic curve given by
\beq
\label{eqn:EC}
 \mathcal{E}: \quad \eta^2 = S(\xi) = \xi^3 + f\,  \xi + g \,.
\eeq
Here, we are using the affine coordinates $(\xi,\eta) \in \mathbb{C}^2$ and
\beq
\label{eqn:hermite}
 f = - 4 a_0 a_4 + a_1 a_3 - \frac{1}{3} a_2^2 \,, \qquad g = -\frac{8}{3} a_0 a_2 a_4 + a_0 a_3^2 + a_1^2 a_4 - \frac{1}{3} a_1 a_2 a_3 + \frac{2}{27} a_2^3 \,;
\eeq
the construction was reviewed in \cites{MR2166182, MR3995925}.  We introduce the polynomial
\beq
 R(x, x_0)  = a_4 \,x^2 x_0^2 + \frac{a_3}{2} x x_0 \big( x + x_0\big) + \frac{a_2}{6}  \big( x^2 + x_0^2\big)  + \frac{2 a_2}{3} x x_0 + \frac{a_1}{2} \big( x+x_0\big) + a_0  \,,\\
\eeq
such that $R(x, x)=P(x)$. It turns out that the polynomial $P(x) P(x_0) - R(x, x_0)^2$ factors. There is a polynomial $R_1(x, x_0)$ of bi-degree $(2,2)$ such that 
\beq
\label{eqn:relat}
 \forall x, x_0: \quad R(x, x_0)^2 +  R_1(x, x_0) \, \big(x - x_0\big)^2 - P(x) \, P(x_0) =0 \,,
\eeq
and we set $Q(x)=R_1(x, x)$. In particular, we have 
\beq
 Q(x)= \frac{1}{3} P(x) P''(x) - \frac{1}{4} P'(x)^2 \,.
\eeq
We denote the discriminants of $\mathcal{Q}$ and $\mathcal{E}$ by $\Delta_\mathcal{Q} =\operatorname{Discr}_x(P)$ and $\Delta_\mathcal{E} =\operatorname{Discr}_\xi(S)$, respectively, such that $\Delta_\mathcal{Q} =\Delta_\mathcal{E}$ by construction. One also checks $\operatorname{Discr}_x(Q)=S(0)^2\operatorname{Discr}_x(P)$. From now on, we will assume that 
\beq
\label{eqn:constraint0}
 \operatorname{Discr}_x(Q) =S(0)^2\operatorname{Discr}_x(P) \not =0 \,.
\eeq 
As before, we also set $[P, Q]= \partial_xP \cdot Q  - P \cdot \partial_xQ$. A tedious but straightforward computation yields the following:
\begin{lemma}
\label{lem:AJM}
For a smooth curve of genus one $\mathcal{Q}$ given by $w^2 = \sum_{i=0}^4 a_i x^{4-i}$, the Abel-Jacobi map $J_{(x_0,-w_0)} : \mathcal{Q} \to \mathcal{E} \cong  \operatorname{Jac}(\mathcal{Q})$ maps $(x,y) \mapsto (\xi,\eta)$ with
\beq
\label{eqn:AJM}
 \xi = 2 \frac{ R(x, x_0) - w w_0}{(x-x_0)^2} \,, \quad \eta =  \frac{4 w w_0 (w-w_0)}{(x-x_0)^3} - \frac{P'(x) w_0+ P'(x_0) w}{(x-x_0)^2} \quad \text{for $x\not = x_0$} \,, 
\eeq 
the point $(x_0,-w_0) \in \mathcal{Q}$ to the point at infinity on $\mathcal{E}$, and $(x_0,w_0)$ to the point with $\xi=-Q(x_0)/P(x_0)$, $\eta= [P, Q]_{x_0}/(2w_0^3)$ if $w_0 \not = 0$. 
\end{lemma}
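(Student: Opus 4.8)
The plan is to treat the formulas in~(\ref{eqn:AJM}) as defining a rational map $J\colon\mathcal{Q}\dashrightarrow\mathcal{E}$, to show that $J$ extends to an isomorphism of curves carrying $(x_0,-w_0)$ to the point at infinity of $\mathcal{E}$ and $(x_0,w_0)$ to the point asserted in the lemma, and then to identify $J$ with the Abel-Jacobi map $J_{(x_0,-w_0)}$ by combining Hermite's isomorphism $\mathcal{E}\cong\operatorname{Jac}(\mathcal{Q})$ with Abel's theorem. All of the genuine computation is concentrated in the first step.

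First, I would verify that the pair $(\xi,\eta)$ of~(\ref{eqn:AJM}) satisfies $\eta^2=\xi^3+f\,\xi+g$ identically on $\mathcal{Q}$, i.e.\ as an identity in $\mathbb{C}(x,x_0)$ after imposing $w^2=P(x)$ and $w_0^2=P(x_0)$. The key tool is the factorization~(\ref{eqn:relat}): substituting it for every occurrence of $R(x,x_0)^2$ and clearing the denominator $(x-x_0)^6$ reduces the claim to a polynomial identity in $P,P',P''$ and $R_1$, in which $f$ and $g$ materialize exactly in Hermite's form~(\ref{eqn:hermite}) once one uses $Q=R_1(x,x)=\tfrac13 PP''-\tfrac14(P')^2$ and the symmetry of $R_1$ in its two arguments. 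Conceptually this identity merely records that $R$ and $R_1$ are the ``addition polynomials'' of the genus-one curve, but reassembling $f,g$ from the $a_i$ is book-keeping best confirmed with a computer-algebra system; this is the step I expect to be the real obstacle. An alternative that avoids importing $f,g$ from the classical literature is to \emph{derive}~(\ref{eqn:AJM}) directly: $\xi$ is the essentially unique generator of the two-dimensional space $\ell(2O)$ of functions on $\mathcal{Q}$ with polar divisor bounded by $2O$, where $O=(x_0,-w_0)$, and $\eta$ is the corresponding generator of $\ell(3O)$; normalizing the leading behaviour of $\xi,\eta$ at $O$ to that of the Weierstrass coordinates produces~(\ref{eqn:AJM}) together with the Weierstrass relation and forces $f,g$ to be~(\ref{eqn:hermite}).

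Second, I would show that $J$ is everywhere defined and compute the exceptional values. Using~(\ref{eqn:relat}) one has $\xi=2\bigl(R(x,x_0)-ww_0\bigr)/(x-x_0)^2=-2R_1(x,x_0)/\bigl(R(x,x_0)+ww_0\bigr)$. The second expression is regular at $(x_0,w_0)$, where its denominator tends to $R(x_0,x_0)+w_0^2=2P(x_0)\neq0$ and its numerator to $-2R_1(x_0,x_0)=-2Q(x_0)$, so $\xi\mapsto-Q(x_0)/P(x_0)$ there; comparing leading coefficients shows $\xi$ is also finite at the two points at infinity; and the first expression shows that the only pole of $\xi$ is a double pole at $O$, since there the numerator tends to $2\bigl(P(x_0)+w_0^2\bigr)=4P(x_0)\neq0$ while $(x-x_0)^2$ vanishes to order two. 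A parallel rewriting of $\eta$ shows it is regular off $O$, with a triple pole at $O$ and value $[P,Q]_{x_0}/(2w_0^3)$ at $(x_0,w_0)$ (equivalently, $\eta(x_0,w_0)=\pm\sqrt{S(-Q(x_0)/P(x_0))}$ with the sign fixed by continuity, using the identity $P(x)^3\,S\bigl(-Q(x)/P(x)\bigr)=\tfrac14[P,Q](x)^2$). Thus $J$ extends to a morphism $\mathcal{Q}\to\mathcal{E}$ with the stated images of $(x_0,\pm w_0)$. Since the polar divisor of $\xi$ on $\mathcal{Q}$ is exactly $2O$, the function $\xi$ has degree $2$ as a map $\mathcal{Q}\to\mathbb{P}^1$; as it factors as $\mathcal{Q}\xrightarrow{\,J\,}\mathcal{E}\xrightarrow{\,\xi_{\mathcal{E}}\,}\mathbb{P}^1$ with $\xi_{\mathcal{E}}$ of degree $2$, we conclude $\deg J=1$, so $J$ is an isomorphism.

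Finally, composing $J$ with Hermite's canonical isomorphism $\mathcal{E}\cong\operatorname{Jac}(\mathcal{Q})$ gives an isomorphism $\mathcal{Q}\to\operatorname{Jac}(\mathcal{Q})$ sending $O=(x_0,-w_0)$ to the origin; by Abel's theorem this is $\pm J_{(x_0,-w_0)}$ (for very general $j$-invariant the only automorphisms of $(\mathcal{E},\infty)$ are $\pm\operatorname{id}$). The normalization $\Delta_{\mathcal{Q}}=\Delta_{\mathcal{E}}$ built into Hermite's construction matches the invariant differential $dx/w$ with $\pm\,d\xi/\eta$, and a sign check at a single point --- equivalently, comparing the linear coefficients in the local expansions of $\xi$ and $\eta$ at $O$ --- excludes the $[-1]$ and identifies $J$ with $J_{(x_0,-w_0)}$ itself. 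This establishes the lemma modulo the identities flagged above, the only substantial one being that of the first step.
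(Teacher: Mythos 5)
Your proposal is correct and is essentially the computation the paper has in mind: the paper offers no written proof beyond the remark that ``a tedious but straightforward computation yields the following,'' and your plan --- verify the Weierstrass relation via the factorization~(\ref{eqn:relat}), rewrite $\xi=-2R_1/(R+ww_0)$ to read off the values at $(x_0,\pm w_0)$ and the double pole at $(x_0,-w_0)$, and conclude $\deg J=1$ --- is the natural way to carry it out. The only nontrivial identity you defer, $P^3S(-Q/P)=\tfrac14[P,Q]^2$, is exactly the syzygy the paper itself uses later (in the proof concerning the pencil $\widetilde{\mathcal{C}}_{x_0}$), so nothing is missing.
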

\begin{remark}
Equation~(\ref{eqn:EC}) is independent of the chosen point $(x_0,-w_0)$. Thus, the Jacobian elliptic curve of a quartic curve exists independently of whether the quartic itself admits a rational point.
\end{remark}
\par  It follows easily from Equation~(\ref{eqn:AJM}) that the coordinates $x$ and $\xi$ in the Abel-Jacobi map  $(\xi,\eta) = J_{(x_0,-y_0)} (x,y)$ are related by the bi-quadratic polynomial
\beq
\label{eqn:correspondence}
  \xi^2  (x - x_0)^2  - 4 \, \xi R(x, x_0)  -  4 R_1(x, x_0)   =0 \,.
\eeq 
This equation defines an algebraic correspondence between points of the two projective lines with affine coordinates $\xi$ and $x$, respectively, where -- given a point $x$ -- there are two solutions for $\xi$  in Equation~(\ref{eqn:correspondence}) and vice versa. 
\subsection{Associated K3 surfaces}
\label{ssec:AJ_K3}
We now construct a family of curves of genus one $\mathcal{Q}_{x_0}$ over the projective line $\mathbb{P}_{x_0}$ (with affine coordinate $x_0$) from two copies of Equation~(\ref{eqn:correspondence}).  Let the curves of genus one $\mathcal{Q}_{x_0}$ be given by
\beq
\label{eqn:genus-one}
   \mathcal{Q}_{x_0}: \quad w^2 = q_1(x, x_0) \, q_2 (x, x_0) \,,
\eeq
where $q_1, q_2$ are the two conics $\mathrm{C}_i = \mathrm{V}(q_i)$ for $i=1, 2$ with
\beq
\begin{split}
 q_1 & = \gamma^2  (x - x_0)^2  - 4 \gamma R(x, x_0)  - 4 R_1(x, x_0)\,, \\
 q_2 & = \delta^2  (x - x_0)^2  - 4 \delta R(x, x_0)  - 4 R_1(x, x_0) \,.
\end{split} 
\eeq
Thus, the general element $\mathcal{Q}_{x_0}$ is the double cover $\chi: \mathcal{Q}_{x_0} \to \mathbb{P}^1$ of the projective line $\mathbb{P}^1$ (with affine coordinate $x$) branched on points $x'_n$ with $n=1,\dots, 4$ satisfying
\beq
\label{eqn:branching}
  \xi^2  (x'_n - x_0)^2 - 4\, \xi R(x'_n, x_0) - 4 R_1(x'_n, x_0)    =0 \,,
\eeq 
where $n=1,2$ and $n=3,4$ correspond to the solutions of Equation~(\ref{eqn:branching}) for $\xi=\gamma$ and $\xi=\delta$, respectively.  We denote the four ramification points of $\chi$ by $p'_n: (x, w) =(x'_n,0) \in \mathcal{Q}_{x_0}$ with $1\le n \le 4$.
\par We also introduce $\mathcal{Z}= \coprod_{x_0} \mathcal{Q}_{x_0}$, i.e., the total space of the genus-one fibration obtained by varying the parameter $x_0$ in Equation~(\ref{eqn:genus-one}). The discriminant of the fiber is easily checked to be a polynomial of degree 24, namely
\beq
\label{eqn:discriminant}
\Delta_\mathcal{Z} = 2^{20}\nu^2 (\mu^2-\nu\kappa)  P(x_0)^2 \, \big( \kappa \,  P(x_0)^2 + 2 \mu \, P(x_0) Q(x_0) + \nu \, Q(x_0)^2\big)^2 \,.
\eeq
It follows that the minimal resolution of the total space $\mathcal{Z}$ is an elliptic K3 surface (not necessarily with section) with an obvious projection map $\pi_\mathcal{Z}: \mathcal{Z} \to \mathbb{P}_{x_0}$.  Here, we have set
\beq
\label{eqn:moduli}
\begin{split}
 & \kappa  = \frac{(\gamma \delta)^2}{2} - \gamma\delta \, S'(0) - 2 (\gamma+\delta) \, S(0)+ \frac{S'(0)^2}{2}\,, \\
 \mu  =& \, \frac{\gamma\delta(\gamma+\delta)}{2} + \frac{(\gamma+\delta)}{2} \, S'(0)+ S(0)\,,\qquad
 \nu  = \frac{(\gamma-\delta)^2}{2} \,,
\end{split} 
\eeq 
with $S(\xi)$ given in Equation~(\ref{eqn:EC}). For $\gamma=\delta$, the curve $\mathcal{Q}_{x_0}$ is reducible. Hence, we will always assume that $\gamma, \delta \in \mathbb{C}$ are chosen such that
\beq
\label{eqn:constraints}
 \nu \not = 0 \,,\qquad \mu^2-\nu\kappa \not =0 \,.
\eeq
If we consider two pairs of points $\pm q_\gamma,  \pm q_\delta \in \mathcal{E}$ on the elliptic curve in Equation~(\ref{eqn:EC}) with coordinates $(\xi_{q_\gamma} = \gamma, \, \pm \eta_{q_\gamma})$ with $\eta_{q_\gamma}^2 = S(\gamma)$ and $(\xi_{q_\delta} = \delta, \, \pm \eta_{q_\delta})$ with $\eta_{q_\delta}^2 = S(\delta)$, respectively, we find $ \mu^2-\nu\kappa= \eta_{q_\gamma}^2\eta_{q_\delta}^2$.  The constraint $\mu^2-\nu\kappa \not =0$ implies that neither $q_\gamma$ nor $q_\delta$ is a 2-torsion point of $\mathcal{E}$, i.e., $2q_\gamma, 2q_\delta \not = 0$ where $0  \in \mathcal{E}$ denotes the neutral element of the elliptic curve. Thus, the constraints in Equation~(\ref{eqn:constraints}) are equivalent to requiring
\beq
\label{eqn:constraintsEC}
 q_\gamma \not=  \pm q_\delta  \,, \qquad \text{and} \qquad q_\gamma+q_\delta \not =  \pm \big(  q_\gamma-q_\delta \big)\,.
\eeq
We have the following crucial lemma:
\begin{lemma}
\label{lem:sections}
The elliptic fibration $\pi_\mathcal{Z}: \mathcal{Z} \to \mathbb{P}_{x_0}$ admits four sections -- rational over $\mathbb{C}(x_0)$ -- given by $p''_n:  (x, w) = ( x''_n, B(x''_n, x_0))$  with $1\le n \le 4$  and
\beq  
\label{eqn:def_B}
 B(x, x_0) =\gamma \delta  \,(x-x_0)^2 - 4 \, R_1(x, x_0) - 2 (\gamma+\delta) \, R(x, x_0) \,, 
\eeq
where $\{x''_n\}_{n=1}^4$ are the roots of $P(x)=0$. 
\end{lemma}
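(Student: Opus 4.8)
The plan is to produce the four points explicitly and verify directly that each lies on $\mathcal{Q}_{x_0}$ for every value of $x_0$; the entire lemma reduces to a single polynomial identity. Abbreviate $A=(x-x_0)^2$, $R=R(x,x_0)$, $R_1=R_1(x,x_0)$, so that the two conics cutting out the fiber are $q_1=\gamma^2A-4\gamma R-4R_1$ and $q_2=\delta^2A-4\delta R-4R_1$, while the proposed $w$-value is $B=\gamma\delta A-2(\gamma+\delta)R-4R_1$ as in Equation~(\ref{eqn:def_B}). The key claim is that
\[
 B(x,x_0)^2 - q_1(x,x_0)\,q_2(x,x_0) \;=\; 4(\gamma-\delta)^2\big(R(x,x_0)^2 + R_1(x,x_0)(x-x_0)^2\big).
\]

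To establish this I would simply expand both $B^2$ and $q_1q_2$ and compare the coefficients of the monomials $A^2$, $AR$, $AR_1$, $RR_1$, $R_1^2$, and $R^2$ in $B$, $R$, $R_1$: the $A^2$, $AR$, $RR_1$, and $R_1^2$ contributions cancel outright, and the surviving $R^2$ and $AR_1$ terms assemble into $4(\gamma-\delta)^2(R^2+AR_1)$ via $(\gamma+\delta)^2-4\gamma\delta=(\gamma-\delta)^2=\gamma^2+\delta^2-2\gamma\delta$. Now I would invoke the factorization identity of Equation~(\ref{eqn:relat}), namely $R(x,x_0)^2+R_1(x,x_0)(x-x_0)^2=P(x)\,P(x_0)$, to conclude
\[
 B(x,x_0)^2 - q_1(x,x_0)\,q_2(x,x_0) \;=\; 4(\gamma-\delta)^2\,P(x)\,P(x_0).
\]
In particular, if $x''_n$ is any root of $P$, then $B(x''_n,x_0)^2=q_1(x''_n,x_0)\,q_2(x''_n,x_0)$, so the point $p''_n\colon (x,w)=(x''_n,B(x''_n,x_0))$ lies on $\mathcal{Q}_{x_0}$ in Equation~(\ref{eqn:genus-one}) for every $x_0$.

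It then remains to promote these points to sections. Since the coefficients $a_i$ of $P$ are constants, each root $x''_n\in\mathbb{C}$, and hence $x_0\mapsto\big(x''_n,B(x''_n,x_0)\big)$ is a morphism from $\mathbb{P}_{x_0}$ to the generic fiber with $w$-coordinate a polynomial in $x_0$; thus it is a point of $\mathcal{Q}_{x_0}$ rational over $\mathbb{C}(x_0)$, and because the minimal resolution $\mathcal{Z}$ is a relatively minimal elliptic surface over $\mathbb{P}_{x_0}$ this rational section extends uniquely to a section. The four sections are pairwise distinct because their $x$-coordinates $x''_1,\dots,x''_4$ are pairwise distinct, which is guaranteed by the standing hypothesis $\operatorname{Discr}_x(P)\neq 0$ of Equation~(\ref{eqn:constraint0}). (Note that this already upgrades $\pi_\mathcal{Z}$ from a genus-one to a Jacobian elliptic fibration.)

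There is essentially no conceptual obstacle once $B$ is written down: the lemma is the displayed identity plus the factorization (\ref{eqn:relat}), and the only real step is to recognize the correct candidate for $B$. The natural route to discovering it is to track the Abel--Jacobi correspondence (\ref{eqn:correspondence}): the branch points $x'_n$ of $\chi$ are the $x$-values paired with $\xi=\gamma,\delta$, and $B$ is the unique bi-quadratic form that simultaneously "halves" $q_1$ and $q_2$, i.e.\ equals $\gamma\delta A-4R_1-(\gamma+\delta)\cdot 2R$, so that $B^2-q_1q_2$ becomes proportional to the resultant expression $R^2+R_1A$ appearing in Lemma~\ref{lem:AJM}. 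I would present the identity as the proof and, if desired, record this motivation in a short remark.
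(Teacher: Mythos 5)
Your proof is correct and takes essentially the same route as the paper's, whose entire argument is the one-line statement that $(x''_n, B(x''_n,x_0))$ is a polynomial solution of Equation~(\ref{eqn:genus-one}); your identity $B^2-q_1q_2=4(\gamma-\delta)^2\big(R^2+R_1(x-x_0)^2\big)=4(\gamma-\delta)^2P(x)P(x_0)$, obtained from Equation~(\ref{eqn:relat}), is precisely the computation behind that check (and is the same identity that underlies the quartic model in Equation~(\ref{eqn:genus-three})).
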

\begin{proof}
The proof follows by checking that $(x, w) = ( x''_n, B(x''_n, x_0))$ is a polynomial solution of Equation~(\ref{eqn:genus-one}) for $1\le n \le 4$ 
\end{proof}
\begin{proposition}
\label{prop:JAC}
The elliptic fibration $\pi_\mathcal{Z}: \mathcal{Z} \to \mathbb{P}_{x_0}$ is birationally equivalent to a Jacobian elliptic K3 surface with a Weierstrass model given by
\beq
\label{eqn:JacFib}
 Y^2 = X^3 - 2 \Big(\mu P(x_0) + \nu Q(x_0)  \Big) \, X^2 + \Big( \big(\mu P(x_0) + \nu Q(x_0)  \big)^2 - \big(\mu^2-\kappa \nu\big) P(x_0)^2 \Big) X \,,
\eeq
with zero section $\sigma_\mathcal{Z}$ and a 2-torsion section $\tau_\mathcal{Z}: (X,Y)=(0,0)$. Generically, the Weierstrass model has 12 singular fibers of Kodaira type $I_2$, and a Mordell-Weil group with $\operatorname{MW}(\mathcal{Z}, \pi_\mathcal{Z})_{\mathrm{tor}}=(\mathbb{Z}/2\mathbb{Z})^2$ and $\operatorname{rank} \operatorname{MW}(\mathcal{Z}, \pi_\mathcal{Z}) = 3$.
\end{proposition}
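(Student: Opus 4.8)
The plan is to turn $\pi_{\mathcal Z}$ into a Weierstrass fibration by promoting one of the four sections of Lemma~\ref{lem:sections} to the origin, to read off the torsion and the reducible fibres from the resulting equation, and finally to pin down the Mordell--Weil rank using the remaining three sections together with a lattice count. Because $\mathcal Z$ already carries a section it is canonically its own relative Jacobian, so the phrase ``birationally equivalent to a Jacobian elliptic K3 surface'' requires no separate argument; the content is the explicit model~\eqref{eqn:JacFib}.

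I would first declare $p''_1\colon(x,w)=(x''_1,B(x''_1,x_0))$ to be the zero section, where $x''_1$ is one of the roots of $P$. Identity~\eqref{eqn:relat} shows that at $x=x''_1$ both $q_1(x''_1,x_0)$ and $q_2(x''_1,x_0)$ are non-zero perfect squares for generic $x_0$, so $p''_1$ is a rational point of $\mathcal Q_{x_0}$ lying off the branch locus of $\chi$. Applying the classical recipe that converts a quartic model $w^2=q_1q_2$ with such a marked point into Weierstrass form, and simplifying the output using the definitions~\eqref{eqn:moduli} of $\kappa,\mu,\nu$ and the identity $\mu^2-\kappa\nu=S(\gamma)S(\delta)=\eta_{q_\gamma}^2\eta_{q_\delta}^2$, should collapse everything to exactly~\eqref{eqn:JacFib}; by the symmetry of~\eqref{eqn:JacFib} in the roots of $P$ the choice among the $p''_n$ is immaterial. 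This is a lengthy but entirely mechanical computation and is best delegated to a computer algebra system.

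Once~\eqref{eqn:JacFib} is in hand the remaining structure follows by inspection. The cubic equals $X\bigl(X^2-2(\mu P+\nu Q)X+(\mu P+\nu Q)^2-(\mu^2-\kappa\nu)P^2\bigr)$, and since $\mu^2-\kappa\nu$ is the square $\eta_{q_\gamma}^2\eta_{q_\delta}^2$ it splits completely over $\mathbb C(x_0)$ as
\[
 X\,\bigl(X-(\mu+\eta_{q_\gamma}\eta_{q_\delta})P-\nu Q\bigr)\,\bigl(X-(\mu-\eta_{q_\gamma}\eta_{q_\delta})P-\nu Q\bigr),
\]
so $\operatorname{MW}(\mathcal Z,\pi_{\mathcal Z})_{\mathrm{tor}}\supseteq(\mathbb Z/2\mathbb Z)^2$, the non-trivial $2$-torsion sections being $(0,0)$ and the two sections through the other roots, which are mutually distinct for generic $x_0$ thanks to~\eqref{eqn:constraints}. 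A short computation gives the discriminant of~\eqref{eqn:JacFib} as a constant multiple of $\nu^2(\mu^2-\kappa\nu)\,P(x_0)^2\bigl(\nu Q(x_0)^2+2\mu P(x_0)Q(x_0)+\kappa P(x_0)^2\bigr)^2$, in agreement with~\eqref{eqn:discriminant}; it has degree $24$ in $x_0$, so the fibre over $x_0=\infty$ is smooth. Constraint~\eqref{eqn:constraint0} forces the four roots of $P$ and the eight roots of $\nu Q^2+2\mu PQ+\kappa P^2=\bigl(\nu Q+(\mu-\eta_{q_\gamma}\eta_{q_\delta})P\bigr)\bigl(\nu Q+(\mu+\eta_{q_\gamma}\eta_{q_\delta})P\bigr)$ to be twelve distinct simple zeros of the discriminant; at each of them the discriminant vanishes to order $2$ while the invariant $c_4$ does not (the latter again using $\mu^2-\kappa\nu\neq 0$), so by Tate's algorithm each of these fibres has Kodaira type $I_2$. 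In particular the Weierstrass model is relatively minimal with discriminant of degree $24$, whence the smooth total space is a K3 surface.

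Finally, the three sections $p''_2,p''_3,p''_4$ of Lemma~\ref{lem:sections} distinct from the chosen origin are non-torsion, and the Gram matrix of their classes under the height pairing --- computed from the intersection numbers with the zero section and the local contributions at the twelve $I_2$-fibres --- is non-degenerate for generic parameters, giving $\operatorname{rank}\operatorname{MW}(\mathcal Z,\pi_{\mathcal Z})\geq 3$. The Shioda--Tate formula then reads $\rho(\mathcal Z)=2+12+\operatorname{rank}\operatorname{MW}(\mathcal Z,\pi_{\mathcal Z})$. Since the construction depends on only three independent moduli --- the $j$-invariant of $\mathcal E$ and the two parameters $\gamma,\delta$ --- the generic $\mathcal Z$ has Picard number $17$ (equivalently, transcendental lattice of rank $5$, exactly as for the Kummer surfaces of $(1,2)$-polarized abelian surfaces in \cite{Garbagnati08}), so $\operatorname{rank}\operatorname{MW}(\mathcal Z,\pi_{\mathcal Z})=3$ and there is no torsion beyond $(\mathbb Z/2\mathbb Z)^2$; alternatively, after rewriting $\kappa,\mu,\nu$ in terms of Rosenhain moduli one matches~\eqref{eqn:JacFib} with an entry in the classification of \cite{MR3263663} and imports the Mordell--Weil group from there, as in the proof of Proposition~\ref{lem:XX}. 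The routine parts are the single Weierstrass transformation and the fibre count; the genuinely delicate point --- the expected main obstacle --- is certifying that the rank is \emph{exactly} three and the torsion exactly $(\mathbb Z/2\mathbb Z)^2$, which needs one of these lattice-theoretic inputs rather than the explicit sections alone.
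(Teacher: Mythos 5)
Your proposal follows essentially the same route as the paper: pass to a Weierstrass model via the marked rational point of Lemma~\ref{lem:sections} (the fiberwise Abel--Jacobi map), read off the full 2-torsion from the factorization of the cubic over $\mathbb{C}(x_0)$ using $\mu^2-\kappa\nu=S(\gamma)S(\delta)$, count the twelve $I_2$ fibres from the degree-$24$ discriminant, and use the three remaining sections of Lemma~\ref{lem:sections} to exhibit the rank. The only divergence is at the last step --- the paper obtains $\operatorname{rank}\operatorname{MW}(\mathcal{Z},\pi_\mathcal{Z})=3$ by deferring to the height computation of \cite{CMS:2019}, whereas you substitute a Shioda--Tate plus moduli-count argument (or the match with the classification in \cite{MR3263663}); both are legitimate, with the caveat that your count of three independent moduli forcing generic Picard number $17$ requires checking that the period map is generically finite in all three parameters.
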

\begin{proof}
A Weierstrass model for the Jacobian $\operatorname{Jac}(\mathcal{Q}_{x_0})$ of the curve of genus one $\mathcal{Q}_{x_0}$ can be constructed using Hermite's equations in Section~\ref{ssec:AJM}. Accordingly, the minimal resolution of the total space $\mathcal{Z}'' = \coprod_{x_0} \operatorname{Jac}(\mathcal{Q}_{x_0})$ is a Jacobian elliptic K3 surface $(\mathcal{Z}'', \pi_{\mathcal{Z}''}, \sigma_{\mathcal{Z}''})$ where $\pi_{\mathcal{Z}''}: \mathcal{Z}'' \to \mathbb{P}_{x_0}$ is the projection map and the section $\sigma_{\mathcal{Z}''}$ is given by the smooth point at infinity in each fiber. One also checks that the discriminant of the Jacobian elliptic fibration is given by $\Delta_{\mathcal{Z}''} = 2^{-18} \Delta_\mathcal{Z}$.  The resulting equation is easily seen to admit three 2-torsion sections $T_1, T_2, T_3$ (as we vary $x_0$), and accordingly the equation can be brought into the form of Equation~(\ref{eqn:JacFib}). The torsion sections are given by $T_1=\tau_\mathcal{Z}: (X,Y)=(0,0)$ and
\beqn
T_{2,3}: \quad (X,Y)= \Big((-\mu \pm \sqrt{\mu^2 - \kappa\nu}) P(x_0) - \nu Q(x_0),0\Big) \,.
\eeqn 
\par  For $P(x)=\prod_n (x-x''_n)$, the fibration in Equation~(\ref{eqn:genus-one}) has four rational sections given by $(x, w)=(x''_n, w''_n)$ with $1 \le n \le 4$ where $w''_n=B(x''_n, x_0)$ is the polynomial in $x_0$. The existence of at least one rational section implies an isomorphism $\mathcal{Z}'' \cong \mathcal{Z}$ as elliptic K3 surfaces; see \cite{MR3995925}*{Thm.~3.4}. This can be seen as follows: we consider Equation~(\ref{eqn:genus-one}) -- when expanded in terms of $x$ -- an equation of the form
\beq
  \mathcal{Q}_{x_0}: \quad w^2 = \tilde{a}_0(x_0) \, x^4 + \dots +  \tilde{a}_4(x_0)  \,,
\eeq
with polynomials $\tilde{a}_i(x_0)$ of degree four. On $\mathcal{Q}_{x_0}$ the point $(x, w)=(x''_1, w''_1)$ is a rational point for every $x_0$ and can be used to construct a (fiberwise) Abel-Jacobi map as in Lemma~\ref{lem:AJM}. The Abel-Jacobi map, viewed as a birational map $(x_0, x, w) \mapsto (x_0, X, Y)$, then induces the isomorphism $\mathcal{Z}'' \cong \mathcal{Z}$ and maps $(x''_1, w''_1)$ to the section $\sigma_\mathcal{Z}$, given by the point at infinity in each fiber. One checks that this isomorphism maps the other three sections $(x, w)=(x''_n, w''_n)$ for $n=2,3,4$ to the non-torsion sections $S_{m}: (X,Y)=(X''_{m}(x_0) , Y''_{m}(x_0))$ with $m=n-1$ where $X''_{m}(x_0)$ and $Y''_{m}(x_0)$ are certain polynomials with coefficients in $\mathbb{Q}[\gamma, \delta, x_1'', \dots x''_4]$ of degree four and six, respectively. The same computation as in \cite{CMS:2019} then shows that the three sections $S_m$ are combinations of sections of minimal height that generate a Mordell-Weil group with $\operatorname{rank} \operatorname{MW}(\mathcal{Z}, \pi_\mathcal{Z}) = 3$.
\end{proof}
\par For two arbitrary sections $S'$ and $S''$ of a Jacobian elliptic fibration, one defines the \emph{height pairing} using the formula
\beq
   \langle S', S'' \rangle = \chi^{\text{hol}} + \sigma_\mathcal{Z} \circ S' + \sigma_\mathcal{Z} \circ S'' - S' \circ S'' - \sum_{\{x_0|\Delta_\mathcal{Z}=0\}} C_{x_0}^{-1}(S',S'') \,,
\eeq
where the holomorphic Euler characteristic is $\chi^{\text{hol}}=2$, and $C_{x_0}^{-1}$ is the inverse Cartan matrix of the reducible fiber at $x_0$. In our case, $C_{x_0}^{-1}$ is the inverse Cartan matrix of a fibre of type $A_1$ located over the points $x_0$ with $\Delta_\mathcal{Z}=0$ in Equation~(\ref{eqn:discriminant}) and contributes $(\frac{1}{2})$ if and only if both $S'$ and $S''$ intersect the non-neutral component of this fiber, i.e., the component not met by the zero-section $\sigma_\mathcal{Z}$. The non-neutral components constitute twelve rational divisors $\mathrm{K}_4, \dots, \mathrm{K}_{15}$ of  $\operatorname{NS}(\mathcal{Z})$ with $\mathrm{K}_n \circ  \mathrm{F}=0$ and $\mathrm{K}_m \circ \mathrm{K}_n = -2 \delta_{m n}$ for $4 \le m, n \le 15$.  We have the following:
\begin{corollary}
\label{cor:sections_and_divisors}
Under the equivalence in Proposition~\ref{prop:JAC}, the four sections $\{ p''_n \}_{n=1}^4$ from Lemma~\ref{lem:sections} are mapped to the zero-section $\sigma_\mathcal{Z}$ and three non-torsion sections $\{S_m\}_{m=1}^3$ of  $\pi_\mathcal{Z}$. The sections define divisor classes $\mathrm{K}_0 = [ \sigma_\mathcal{Z} ]$ and $\mathrm{K}_m = [S_m ]$ with $\mathrm{K}_m \circ \mathrm{F} =1$ and $\mathrm{K}_m \circ \mathrm{K}_n = -2 \delta_{m n}$ for $0 \le m \le 3$ and $0 \le n \le 15$ where $\mathrm{K}_4, \dots, \mathrm{K}_{15}$ are the non-neutral components of the reducible fibers of type $A_1$. In particular, the Jacobian elliptic fibration is never singular along $\sigma_\mathcal{Z}, S_1, S_2, S_3$.
\end{corollary}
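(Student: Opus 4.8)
The plan is to combine the identifications already established in the proof of Proposition~\ref{prop:JAC} with a small amount of intersection theory on the K3 surface. There, the fiberwise Abel--Jacobi map based at the rational point $p''_1=(x''_1,w''_1)$ was shown to realize the birational equivalence $\mathcal{Z}''\cong\mathcal{Z}$, which, as noted there, is an isomorphism of elliptic K3 surfaces---K3 surfaces being minimal---and hence preserves all intersection numbers. Under this isomorphism $p''_1\mapsto\sigma_\mathcal{Z}$ and $p''_n\mapsto S_{n-1}$ for $n=2,3,4$, where $S_1,S_2,S_3$ are the explicit non-torsion polynomial sections $(X,Y)=(X''_m(x_0),Y''_m(x_0))$ that generate a rank-three Mordell--Weil group. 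This gives the first assertion; accordingly we put $\mathrm{K}_0=[\sigma_\mathcal{Z}]$ and $\mathrm{K}_m=[S_m]$ for $1\le m\le3$.

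Three of the stated intersection numbers are then formal. Each of $\sigma_\mathcal{Z},S_1,S_2,S_3$ is a section of a Jacobian elliptic fibration, hence a smooth rational curve on the K3 surface $\mathcal{Z}$, so adjunction yields $\mathrm{K}_m^2=-2$; and any section meets the general fiber transversally in one point, so $\mathrm{K}_m\circ\mathrm{F}=1$. Moreover, in the model~(\ref{eqn:genus-one}) the four sections $p''_1,\dots,p''_4$ lie over the four distinct constant values $x=x''_1,\dots,x''_4$ (the roots of $P$); hence for $m\ne n$ the points of the two sections in each fiber have distinct $x$-coordinates and are therefore distinct---on a resolved singular fiber, distinct $x$-coordinates place them on distinct components or at distinct smooth points of the same one---so the four sections are pairwise disjoint and $\mathrm{K}_m\circ\mathrm{K}_n=0$ for $m\ne n$, $0\le m,n\le3$.

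It remains to show $\mathrm{K}_m\circ\mathrm{K}_j=0$ for $0\le m\le3$ and $4\le j\le15$, where $\mathrm{K}_4,\dots,\mathrm{K}_{15}$ are the non-neutral components of the twelve reducible $I_2$-fibers, i.e. the exceptional $\mathbb{P}^1$'s obtained by resolving the nodes of the Weierstrass model~(\ref{eqn:JacFib}). A section meets such a component, necessarily with multiplicity one, exactly when it passes through the corresponding node, so the claim is equivalent to saying that none of $\sigma_\mathcal{Z},S_1,S_2,S_3$ passes through any of the twelve nodes. For $\sigma_\mathcal{Z}$ this is immediate, the nodes being at finite points. Writing the cubic in~(\ref{eqn:JacFib}) as $X\bigl((X-(\mu P+\nu Q))^{2}-(\mu^{2}-\kappa\nu)P^{2}\bigr)$ locates the rest: eight nodes at $(X,Y)=(0,0)$ over the zeros of $\kappa P(x_0)^{2}+2\mu P(x_0)Q(x_0)+\nu Q(x_0)^{2}$, and four nodes at $(X,Y)=(\nu Q(x_0),0)$ over the zeros of $P(x_0)$. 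Substituting the explicit degree-four polynomials $X''_m(x_0)$ into these twelve incidence conditions and verifying that none is ever satisfied is the one genuine computation in the argument---a finite but tedious check of exactly the kind already carried out in the proof of Proposition~\ref{prop:JAC}---and this is the step I expect to be the main obstacle.

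Alternatively, one may quote from \cite{CMS:2019} that each $S_m$ has Mordell--Weil height $4$; then, using $\sigma_\mathcal{Z}\circ S_m=0$ from the previous paragraph, $S_m\circ S_m=-2$, and the height-pairing formula, one gets $4=\chi^{\text{hol}}+2(\sigma_\mathcal{Z}\circ S_m)-(S_m\circ S_m)-\sum_{x_0}C_{x_0}^{-1}(S_m,S_m)=4-\sum_{x_0}C_{x_0}^{-1}(S_m,S_m)$, so every term $C_{x_0}^{-1}(S_m,S_m)$ vanishes; since this term is positive precisely when $S_m$ meets the non-neutral component over $x_0$, it follows that $S_m$ meets only neutral components, which is exactly the required node-avoidance. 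Finally, the Weierstrass model~(\ref{eqn:JacFib}) is generically smooth away from these twelve nodes, so the elliptic fibration is nowhere singular along $\sigma_\mathcal{Z},S_1,S_2,S_3$, which establishes the last assertion.
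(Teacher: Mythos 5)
Your proposal is correct and follows essentially the same route as the paper, whose entire proof is the one-line assertion that a direct computation shows $\sigma_\mathcal{Z}, S_1, S_2, S_3$ are pairwise disjoint and meet no non-neutral components of the reducible fibers; your observation that pairwise disjointness follows from the sections sitting over the distinct constant values $x=x''_n$, together with your identification of the twelve nodes of the Weierstrass model that must be avoided, is precisely the content of that computation. One caution: the alternative argument via the height pairing is only non-circular if the value $\langle S_m,S_m\rangle=4$ is imported from \cite{CMS:2019} rather than read off Table~\ref{tab:height}, since the table is itself derived from the intersection numbers this corollary establishes.
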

\begin{proof}
By a direct computation one shows that the sections $S_1, S_2, S_3$ do not intersect each other, nor $\sigma_\mathcal{Z}$, nor any non-neutral components of the reducible fibers.
\end{proof}
\par The 2-torsion sections $T_1=\tau_\mathcal{Z}, T_2, T_3$, spanning $\operatorname{MW}(\mathcal{Z}, \pi_\mathcal{Z})_{\mathrm{tor}}$, each intersect the non-neutral components of eight reducible fibers of type $A_1$  -- partitioning the twelve rational curves (of the non-neutral components) into three sets of eight curves with pairwise intersections consisting of four curves and no triple intersection. The 2-torsion sections  do not intersect the zero section, but each 2-torsion section intersects each of the sections $S_1, S_2, S_3$ twice. The intersection pairings for all aforementioned divisor classes and height pairings of the corresponding sections are given in Table~\ref{tab:intersection}. 
\begin{table}
\parbox{.45\linewidth}{
\scalemath{0.8}{
\begin{tabular}{c||r|r|r|r|r|r|r|r}
$\circ$ 	& $F$ 	& $\sigma_\mathcal{Z}$	& $T_1$	& $T_2$	& $T_3$	& $S_1$	& $S_2$ 	& $S_3$ 	\\
\hline\hline
$F$					& 0		& $1$	& $1$	& $1$	& $1$	& $1$	& $1$	& $1$	\\
$\sigma_\mathcal{Z}$	& $1$	& $-2$	& $0$	& $0$	& $0$	& $0$	& $0$	& $0$	\\
$T_1$				& $1$	& $0$	& $-2$	& $0$	& $0$	& $2$	& $2$	& $2$	\\
$T_2$				& $1$	& $0$	& $0$	& $-2$	& $0$	& $2$	& $2$	& $2$	\\
$T_3$				& $1$	& $0$	& $0$	& $0$	& $-2$	& $2$	& $2$	& $2$	\\
$S_1$				& $1$	& $0$	& $2$	& $2$	& $2$	& $-2$	& $0$	& $0$	\\
$S_2$				& $1$	& $0$	& $2$	& $2$	& $2$	& $0$	& $-2$	& $0$	\\
$S_3$				& $1$	& $0$	& $2$	& $2$	& $2$	& $0$	& $0$	& $-2$	\\
\end{tabular}}}
\quad
\parbox{.45\linewidth}{
\scalemath{0.82}{
\begin{tabular}{c||r|r|r|r|r|r|r}
$\langle\bullet,\bullet\rangle$ 	& $\sigma_\mathcal{Z}$	& $T_1$	& $T_2$	& $T_3$	& $S_1$	& $S_2$ 	& $S_3$	\\
\hline\hline
$\sigma_\mathcal{Z}$	& $0$	& $0$ 	& $0$	& $0$ 	& $0$	& $0$	& $0$\\
$T_1$	& $0$	& $0$ 	& $0$	& $0$	& $0$	& $0$	& $0$\\
$T_2$	& $0$	& $0$ 	& $0$	& $0$ 	& $0$	& $0$	& $0$\\
$T_3$	& $0$	& $0$ 	& $0$	& $0$ 	& $0$	& $0$	& $0$\\
$S_1$	& $0$	& $0$ 	& $0$	& $0$	& $4$	& $2$	& $2$\\
$S_2$	& $0$	& $0$ 	& $0$	& $0$ 	& $2$	& $4$	& $2$\\
$S_3$	& $0$	& $0$ 	& $0$	& $0$ 	& $2$	& $2$	& $4$\\
\end{tabular}}}
\medskip
\caption{Intersection and Height Pairings}
\label{tab:height}\label{tab:intersection}
\end{table} 
We make the following:
\begin{remark}
\label{rem:Zfibration_dual}
A second Jacobian elliptic K3 surface $\pi_{\mathcal{Z}'}: \mathcal{Z}' \to \mathbb{P}^1$ is given by the Weierstrass model
\beq
\label{eqn:JacFib_dual}
 y^2 = x^3 +4 \Big(\mu \, P(x_0) + \nu \, Q(x_0)  \Big) \, x^2 + 4 \Big(\mu^2-\kappa \nu\Big) P(x_0)^2 x \,,
\eeq
with zero section $\sigma_{\mathcal{Z}'}$ and the 2-torsion section $\tau_{\mathcal{Z}'}: (x,y)=(0,0)$.  Generically, the model has four singular fibers of Kodaira type $I_4$, eight singular fibers of type $I_1$, and a Mordell-Weil group with $\operatorname{MW}(\mathcal{Z}', \pi_{\mathcal{Z}'})_{\mathrm{tor}}=\mathbb{Z}/2\mathbb{Z}$ and $\operatorname{rank} \operatorname{MW}(\mathcal{Z}', \pi_{\mathcal{Z}'}) = 3$. 
\par The K3 surfaces $\mathcal{Z}$ in Proposition~\ref{prop:JAC} and $\mathcal{Z}'$ are related by a pair of dual geometric 2-isogenies similar to Equation~(\ref{isog_intro}), i.e.,
\Beq
\label{isog_intro2}
 \xymatrix 
{ \mathcal{Z} \ar @(dl,ul) ^{\imath_{\mathcal{Z}}}
\ar @/_0.5pc/ @{-->}  [rr] &
& \mathcal{Z}' \ar @(dr,ur) _{\imath_{\mathcal{Z}'}}
\ar @/_0.5pc/ @{-->}  [ll] \\
} 
\Eeq
The isogenies are covered by the van~Geemen-Sarti involutions $\imath_\mathcal{Z}$ and $\imath_{\mathcal{Z}'}$ obtained as translations by the 2-torsion section $\tau_\mathcal{Z}: (X,Y)=(0,0)$ on $\mathcal{Z}$ and $\tau_{\mathcal{Z}'}: (x,y)=(0,0)$ on $\mathcal{Z}'$, respectively.
\end{remark}
 \subsection{Canonical curves of genus three}
 We will now construct a family of plane, quartic curves $\mathcal{D}_{x_0}$ with a bielliptic involution $\jmath$ over the projective line $\mathbb{P}_{x_0}$. Generically $\mathcal{D}_{x_0}$ is a smooth curve of genus three. We have the following:
\begin{proposition}
\label{prop:pencil}
Assuming Equations~(\ref{eqn:constraint0}) and~(\ref{eqn:constraintsEC}), the family $\{ \mathcal{D}_{x_0} \}_{x_0 \in \mathbb{P}_{x_0}}$ over the projective line $\mathbb{P}_{x_0}$ given by
\beq
\label{eqn:genus-three}
 \mathcal{D}_{x_0}: \quad  \, z^4 + 2 \, B(x, x_0) \, z^2 + 4 \, \big(\gamma-\delta\big)^2 \, P(x_0) \, P(x) =0 \,,
\eeq
is a linear pencil of plane, quartic curves with affine coordinates $x, z$, $B(x, x_0)$ given in Equation~(\ref{eqn:def_B}), and bielliptic involution $\jmath: (x, z) \mapsto (x,-z)$ covering  the degree-two map $\pi_{x_0}:  \mathcal{D}_{x_0} \to \mathcal{Q}_{x_0}= \mathcal{D}_{x_0}  / \langle \jmath \rangle$ such that
\begin{enumerate}
\item $\mathcal{D}_{x_0}$ is a smooth curve of genus three if and only if $\Delta_\mathcal{Z} \not =0$ in Equation~(\ref{eqn:discriminant}),
\item the pencil induces the elliptic fibration $\mathcal{Z}= \coprod_{x_0} \mathcal{Q}_{x_0} \to \mathbb{P}_{x_0}$ in Equation~(\ref{eqn:genus-one}),
\item the branch locus of $\psi$ is given by the divisors $\mathrm{K}_0, \mathrm{K}_1, \mathrm{K}_2, \mathrm{K}_3$ or, equivalently, $p''_n$ in Corollary~\ref{cor:sections_and_divisors}.
\end{enumerate}
\end{proposition}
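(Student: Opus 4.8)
The plan is to derive everything from one polynomial identity and then from the structure of the double cover $\pi_{x_0}$. First I would verify the identity
\[
 B(x,x_0)^2 - 4(\gamma-\delta)^2\,P(x)\,P(x_0) \;=\; q_1(x,x_0)\,q_2(x,x_0),
\]
which is a routine expansion once one substitutes $R(x,x_0)^2 + R_1(x,x_0)(x-x_0)^2 = P(x)P(x_0)$ from Equation~(\ref{eqn:relat}). Completing the square in $z^2$ in Equation~(\ref{eqn:genus-three}) then shows that $w := z^2 + B(x,x_0)$ satisfies $w^2 = q_1 q_2$, so the $\jmath$-invariant map $(x,z)\mapsto (x,w)$ realises $\pi_{x_0}\colon \mathcal{D}_{x_0}\to \mathcal{D}_{x_0}/\langle\jmath\rangle$ as the degree-two cover of the curve $\mathcal{Q}_{x_0}$ of Equation~(\ref{eqn:genus-one}). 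Letting $x_0$ vary, the quotient of $\coprod_{x_0}\mathcal{D}_{x_0}$ by the fibrewise $\jmath$ is exactly the genus-one fibration $\mathcal{Z}=\coprod_{x_0}\mathcal{Q}_{x_0}\to\mathbb{P}_{x_0}$; this is assertion~(2). That $\{\mathcal{D}_{x_0}\}$ is a pencil of plane quartics carrying the involution $\jmath\colon (x,z)\mapsto (x,-z)$ is immediate from the degree-four shape of Equation~(\ref{eqn:genus-three}).

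Next I would pin down the branch data and, with it, assertion~(3). The fixed locus of $\jmath$ on $\mathcal{D}_{x_0}$ is $\{z=0\}$, and on $\mathcal{D}_{x_0}$ this forces $P(x)P(x_0)=0$; for $P(x_0)\neq 0$ these are the four points $(x''_n,0)$ with $P(x''_n)=0$, pairwise distinct because $\operatorname{Discr}_x(P)\neq 0$ by Equation~(\ref{eqn:constraint0}). Under $\pi_{x_0}$ they map to the points $(x''_n,B(x''_n,x_0))$, i.e.\ to the four sections $p''_n$ of Lemma~\ref{lem:sections}, which under the equivalence of Proposition~\ref{prop:JAC} become $\sigma_\mathcal{Z},S_1,S_2,S_3$ with divisor classes $\mathrm{K}_0,\dots,\mathrm{K}_3$ (Corollary~\ref{cor:sections_and_divisors}); hence the ramification of $\pi_{x_0}$ sweeps out exactly these four sections, which is~(3). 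Since $\mathcal{Q}_{x_0}$ has genus one while a smooth plane quartic has genus three, $\jmath$ is a bielliptic involution and $\pi_{x_0}$ is branched at four points, as Riemann--Hurwitz demands.

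For assertion~(1) I would apply the Jacobian criterion to $F := z^4 + 2B z^2 + 4(\gamma-\delta)^2 P(x_0)P(x)$. From $F_z = 4z(z^2+B)$, a singular point of the affine curve has either $z=0$ --- which forces $P(x)=P'(x)=0$, impossible since $\operatorname{Discr}_x(P)\neq 0$ --- or $z^2 = -B(x,x_0)$, in which case $F=0$ becomes $B^2 = 4(\gamma-\delta)^2 P(x_0)P(x)$, hence $q_1 q_2 = 0$ by the identity above, and $F_x = -\partial_x(B^2) + \partial_x\!\bigl(4(\gamma-\delta)^2 P(x_0)P(x)\bigr) = -\partial_x(q_1 q_2)$, so $x$ must be a multiple root of the branch quartic $q_1(x,x_0)q_2(x,x_0)$; a parallel computation in the chart around $x=\infty$, where Equation~(\ref{eqn:genus-three}) reappears with $P$ replaced by its reciprocal polynomial, disposes of singularities at infinity. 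Therefore $\mathcal{D}_{x_0}$ is singular if and only if $q_1 q_2$ has a repeated root or drops degree, which is precisely $\Delta_\mathcal{Z}(x_0)=0$ by Equation~(\ref{eqn:discriminant}) and Equation~(\ref{eqn:constraints}); when $\Delta_\mathcal{Z}(x_0)\neq 0$ the curve is a smooth plane quartic, hence of genus three. The hard part here is bookkeeping rather than ideas: one must check that the split $z=0$ versus $z^2=-B$ is genuinely exhaustive both affinely and at infinity, and reconcile the explicit factorisation of $\Delta_\mathcal{Z}$ in Equation~(\ref{eqn:discriminant}) with the condition ``$q_1 q_2$ has a repeated root or drops degree'' --- noting that $\nu^2(\mu^2-\nu\kappa)$ is a nonzero constant by Equation~(\ref{eqn:constraints}), that the factor $P(x_0)^2$ records the locus where $\mathcal{D}_{x_0}$ visibly degenerates to the reducible curve $z^2(z^2+2B)=0$, and that the remaining factor $\kappa P(x_0)^2 + 2\mu P(x_0)Q(x_0) + \nu Q(x_0)^2$ is, up to these, $\operatorname{Discr}_x(q_1 q_2)$ --- the one place a short unavoidable computation is needed.
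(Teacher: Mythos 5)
Your proposal is correct and follows essentially the same route as the paper: the same substitution $w=z^{2}+B(x,x_{0})$ (equivalently the identity $B^{2}-4(\gamma-\delta)^{2}P(x)P(x_{0})=q_{1}q_{2}$) to realize $\pi_{x_0}$ and prove (2), the same Jacobian-criterion case split $z=0$ versus $z^{2}=-B$ reducing smoothness to the nonvanishing of $\operatorname{Discr}_x(q_1q_2)$, i.e.\ of $\Delta_{\mathcal Z}$, for (1), and the same identification of the fixed locus of $\jmath$ with the sections $p''_n$ of Lemma~\ref{lem:sections} for (3). If anything you are more explicit than the paper on the points at infinity and on the reducible fibers over $P(x_{0})=0$, which is welcome.
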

\begin{proof}
For $P(x_0)=0$ the curve $\mathcal{D}_{x_0}$ is reducible: it consists of a rational component $z=0$ of multiplicity two and the conic $z^2= -2 B(x, x_0)$. The latter is irreducible since the discriminant $ \operatorname{Discr}_x B(x, x_0)$ does not vanish at a root of $P(x_0)=0$. Now assume $P_0 = P(x_0)\not =0$: Equation~(\ref{eqn:genus-three}) cannot have a singularity for $z=0$ since $\operatorname{Discr}_x(P) \not =0$ whence $z \not =0$. Taking the derivative of Equation~(\ref{eqn:genus-three}) with respect to $z$ at a singular point $(x, z)$ yields $z^2=-B(x)$, and $B(x)^2 - 4 P_0 P(x)=0$ from Equation~(\ref{eqn:genus-three}). The vanishing of the derivative of Equation~(\ref{eqn:genus-three}) with respect to $x$ yields $2 \, B(x) \, B'(x) - 4 (\gamma-\delta)^2 P_0 \, P'(x)=0$. Thus, for $\mathcal{D}_{x_0}$ to be reducible or to have a singular point, we must have 
\beq
  0  = \operatorname{Discr}_x\Big( B(x)^2 - 4 \, (\gamma-\delta)^2 P_0 P(x) \Big) = \Delta_\mathcal{Z} \,.
\eeq  
This proves (1).  We obtain a double cover $\pi_{x_0} \mathcal{D}_{x_0} \to \mathcal{Q}_{x_0}$ by setting $w=z^2 +B(x, x_0)$ in Equation~(\ref{eqn:genus-one}) giving Equation~(\ref{eqn:genus-three}).  It follows that $\mathcal{D}_{x_0}$ is the double cover of the curve $\mathcal{Q}_{x_0}$ branched over four points, and the two sheets of the covering are interchanged by the involution $\jmath: (x, z) \mapsto (x,-z)$. Thus, the pencil induces the elliptic fibration $\mathcal{Z}= \coprod_{x_0} \mathcal{Q}_{x_0} \to \mathbb{P}_{x_0}$ in Equation~(\ref{eqn:genus-one}) by means of the quotient $\mathcal{Q}_{x_0} \cong \mathcal{D}_{x_0}/ \langle \jmath \rangle$. This proves (2). Finally, (3) follows from Lemma~\ref{lem:sections} and Corollary~\ref{cor:sections_and_divisors}.
\end{proof}
\par We have the following:
\begin{proposition}
\label{prop:decomp}
For any smooth curve $\mathcal{D}_{x_0}$ in Equation~(\ref{eqn:genus-three}) with bielliptic structure the map $\pi_{x_0}: \mathcal{D}_{x_0} \to \mathcal{Q}_{x_0}$ induces an isogeny
\beq
 \operatorname{Jac}{(\mathcal{D}_{x_0})} \ \simeq \ \operatorname{Prym}{(\mathcal{D}_{x_0}, \pi_{x_0} )} \times \operatorname{Jac}(\mathcal{Q}_{x_0}) \,,
\eeq 
where $\operatorname{Prym}{(\mathcal{D}_{x_0}, \pi_{x_0})}$ is the Prym variety with a polarization of type $(1,2)$. In particular, $\mathcal{D}_{x_0}$ is embedded into  $\operatorname{Prym}{(\mathcal{D}_{x_0}, \pi_{x_0})}$ as a curve of self-intersection four.
\end{proposition}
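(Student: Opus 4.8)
The plan is to obtain the statement by combining Barth's duality theorem (Theorem~\ref{thm:Barth}) with the standard isogeny decomposition of the Jacobian of a double cover into its Prym part and the pullback of the base. First I would check that Proposition~\ref{prop:pencil} places us squarely in the hypotheses of Theorem~\ref{thm:Barth}: for a smooth member $\mathcal{D}_{x_0}$ (equivalently $\Delta_\mathcal{Z}\neq 0$), the quotient map $\pi_{x_0}\colon\mathcal{D}_{x_0}\to\mathcal{Q}_{x_0}$ has degree two with Galois involution $\jmath\colon(x,z)\mapsto(x,-z)$; its fixed points are the four points $(x''_n,0)$ lying over the roots of $P$, in agreement with Riemann--Hurwitz for a genus-three double cover of a genus-one curve; and $\mathcal{Q}_{x_0}$ carries the rational points $p''_n$ of Lemma~\ref{lem:sections}, so $\mathcal{Q}_{x_0}\cong\operatorname{Jac}(\mathcal{Q}_{x_0})$ is an honest elliptic curve. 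Moreover $\mathcal{D}_{x_0}$ is a smooth plane quartic, hence canonically embedded and in particular non-hyperelliptic, so $\jmath$ is genuinely a bielliptic involution. Thus $\mathcal{D}_{x_0}$ is a smooth bielliptic curve of genus three in the sense of Theorem~\ref{thm:Barth}.

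Applying Theorem~\ref{thm:Barth} then gives immediately that $\operatorname{Prym}(\mathcal{D}_{x_0},\pi_{x_0})$ is an abelian surface with a polarization of type $(1,2)$, and that $\mathcal{D}_{x_0}$ is embedded into $\operatorname{Prym}(\mathcal{D}_{x_0},\pi_{x_0})$, via the Abel--Prym map, as a curve of self-intersection four. The value four is in fact forced by the polarization type alone, since a divisor representing a polarization of type $(d_1,d_2)$ on an abelian surface has self-intersection $2d_1d_2=4$ here. This accounts for the polarization-type assertion and the self-intersection-four assertion of the proposition.

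It remains to produce the isogeny, which I would do with the norm map $\operatorname{Nm}\colon\operatorname{Jac}(\mathcal{D}_{x_0})\to\operatorname{Jac}(\mathcal{Q}_{x_0})$ and the pullback $\pi_{x_0}^{*}\colon\operatorname{Jac}(\mathcal{Q}_{x_0})\to\operatorname{Jac}(\mathcal{D}_{x_0})$, which satisfy $\operatorname{Nm}\circ\pi_{x_0}^{*}=[2]$. Since $\pi_{x_0}$ is ramified, $\pi_{x_0}^{*}$ is injective and $\operatorname{Prym}(\mathcal{D}_{x_0},\pi_{x_0})=\operatorname{im}(1-\jmath^{*})$ is a two-dimensional abelian subvariety, complementary up to isogeny to $\pi_{x_0}^{*}\operatorname{Jac}(\mathcal{Q}_{x_0})$. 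The addition morphism
\[
 \operatorname{Jac}(\mathcal{Q}_{x_0})\times\operatorname{Prym}(\mathcal{D}_{x_0},\pi_{x_0})\longrightarrow\operatorname{Jac}(\mathcal{D}_{x_0}),\qquad (a,b)\longmapsto\pi_{x_0}^{*}a+b,
\]
is then an isogeny: applying $\operatorname{Nm}$ to an element of its kernel gives $2a=0$ and $b=-\pi_{x_0}^{*}a$, so the kernel is contained in $\{(a,-\pi_{x_0}^{*}a):a\in\operatorname{Jac}(\mathcal{Q}_{x_0})[2]\}\cong(\mathbb{Z}/2\mathbb{Z})^{2}$, which is finite. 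This yields $\operatorname{Jac}(\mathcal{D}_{x_0})\simeq\operatorname{Prym}(\mathcal{D}_{x_0},\pi_{x_0})\times\operatorname{Jac}(\mathcal{Q}_{x_0})$, as claimed.

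I expect no serious obstacle here: the argument is essentially bookkeeping, checking that the construction of Section~\ref{ssec:AJ_K3} produces exactly the data (smooth bielliptic genus-three curve, elliptic quotient, ramified degree-two map) to which Barth's theorem and the Prym decomposition apply. The only place demanding genuine input is the determination that the Prym's polarization has type $(1,2)$ rather than $(1,1)$, and this is precisely the content of Theorem~\ref{thm:Barth}, so I would cite it rather than re-run the eigenspace analysis of $\jmath^{*}$ on $H^{0}(\mathcal{D}_{x_0},\Omega^{1})$.
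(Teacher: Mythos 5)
Your proposal is correct and follows essentially the same route as the paper: verify that the smooth members are bielliptic curves of genus three with elliptic quotient, invoke Barth's theorem (Theorem~\ref{thm:Barth}) for the $(1,2)$-polarization and the self-intersection-four embedding, and obtain the isogeny from the norm map and the splitting of $\operatorname{Jac}(\mathcal{D}_{x_0})$ into $\pi_{x_0}^*\operatorname{Jac}(\mathcal{Q}_{x_0})$ and the Prym part. Your explicit check that the kernel of the addition morphism lands in $\operatorname{Jac}(\mathcal{Q}_{x_0})[2]$ is a welcome bit of extra detail that the paper leaves implicit, but it is the same argument.
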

\begin{proof}
Assuming Equations~(\ref{eqn:constraint0}) and~(\ref{eqn:constraintsEC}), $\mathcal{D}_{x_0}$ in Equation~(\ref{eqn:genus-three}) is smooth, bielliptic with genus three if and only if $\Delta_\mathcal{Z} \not =0$ in Equation~(\ref{eqn:discriminant}).  $\mathcal{D}_{x_0}$ is the double cover of the curve $\mathcal{Q}_{x_0}$ branched over four points. The double covering~$\psi$ induces an associated norm morphism $\operatorname{Jac}{(\mathcal{D}_{x_0})} \to \operatorname{Jac}(\mathcal{Q}_{x_0})$. The involution $\imath$ extends to an involution on $\operatorname{Jac}{(\mathcal{D}_{x_0})} \to \operatorname{Jac}(\mathcal{Q}_{x_0})$. Then $\operatorname{Jac}{(\mathcal{D}_{x_0})}$ splits into an even part and an odd part. By definition the latter is the Prym variety. It follows from \cite{MR946234}*{Sec.~1.4}  that the Prym has a natural polarization on it, induced by the theta divisor on $\operatorname{Jac}{(\mathcal{Q}_{x_0})}$, which is the theta divisor $\{ [p - p_6] | \, p \in \operatorname{Jac}{(\mathcal{Q}_{x_0})} \}$ where $p_6$ defines the neutral element of the elliptic group law such that
\beq
 \operatorname{Prym}{(\mathcal{D}_{x_0}, \pi_{x_0})} \cong \operatorname{Jac}{(\mathcal{D}_{x_0})} / \pi_{x_0}^*\operatorname{Jac}(\mathcal{Q}_{x_0}) \,.
\eeq 
Barth also proved that a smooth, bielliptic curve of genus three is embedded into  $\operatorname{Prym}{(\mathcal{D}_{x_0}, \pi_{x_0})}$ as a curve of self-intersection four.
\end{proof}
\begin{remark}
The notion of Prym variety in Proposition~\ref{prop:decomp} can be generalized to include the singular covers $\pi_{x_0}: \mathcal{D}_{x_0} \to \mathcal{Q}_{x_0}$ using the results of \cite{MR572974}*{Prop.~3.5} and \cite{MR1736231}*{Lemma~1}. The Prym is then replaced by a generalized Prym variety for an allowable cover birational to the singular cover $\pi_{x_0}: \mathcal{D}_{x_0} \to \mathcal{Q}_{x_0}$.
\end{remark}
 \subsection{Singular and hyperelliptic fibers}
In this section, we shall examine the singular and hyperelliptic elements of the pencil of curves of genus three after some elementary modifications.  Substituting $w=y + B(x, x_0)$ into Equation~(\ref{eqn:genus-one}) we obtain its equivalent form
\beq
 \mathcal{Q}_{x_0}: \quad  \, y^2 \ + 2 \, B(x, x_0) \, y \ + 4 \, (\gamma-\delta)^2  \, P(x_0) \, P(x) =0 \,.
\eeq
The double cover $\pi_{x_0}: \mathcal{D}_{x_0} \to  \mathcal{Q}_{x_0}$ is then simply given by $y =z^2$, and the four branch points are $(x,y)=(x''_n,0)$ with $P(x''_n)=0$ for $1 \le n \le 4$. Blowing up at the points $P(x_0)=0$, we set $y = 2 \, P(x_0) \, \tilde{y}$ and take the strict transform to obtain 
\beq
 \widetilde{\mathcal{Q}}_{x_0}: \quad  \, P(x_0) \, \tilde{y}^2 \ + B(x, x_0) \, \tilde{y} \ + (\gamma-\delta)^2 \, P(x) =0 \,,
\eeq
and a double cover $\pi_{x_0}: \widetilde{\mathcal{D}}_{x_0} \to  \widetilde{\mathcal{Q}}_{x_0}$ given by $\tilde{y} = \tilde{z}^2$ with
\beq
\label{eqn:genus-three_blow-up}
 \widetilde{\mathcal{D}}_{x_0}: \quad  P(x_0) \,  \tilde{z}^4 \ + B(x, x_0) \, \tilde{z}^2 \ + (\gamma-\delta)^2 P(x) =0 \,,
\eeq
and the bielliptic involution $\jmath: (x,\tilde{z}) \mapsto (x,-\tilde{z})$.  We have the following:
\begin{proposition}
\label{prop:normalization}
For $\Delta_\mathcal{Z} \not =0$ in Equation~(\ref{eqn:discriminant}) the curve $\widetilde{\mathcal{D}}_{x_0}$ is a smooth irreducible curve of genus three isomorphic to $\mathcal{D}_{x_0}$. For $\Delta_\mathcal{Z} =0$ there are twelve singular curves forming three sets of four isomorphic curves over the roots of $\kappa P(x_0) + (\mu \pm \sqrt{\mu^2 - \kappa \nu}) Q(x_0)=0$ and $P(x_0)=0$, respectively. The former eight are irreducible curves of geometric genus two with one node. The latter four are reducible nodal curves isomorphic to $\mathbb{P}^1 \cup \mathcal{C}'$ where $\mathcal{C}'$ is the curve of genus two given by
\beq
\label{eqn:normalization} 
  \mathcal{C}': \quad  \eta^2 =  \big(\xi-\gamma\big)\big(\xi-\delta\big) \, S(\xi)\,.
\eeq
Here, $(\xi,\eta) \in \mathbb{C}^2$ are affine coordinates and $S(\xi)$ is given in Equation~(\ref{eqn:EC}).
\end{proposition}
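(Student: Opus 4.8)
The plan is to split the parameter line $\mathbb{P}_{x_0}$ into the three regimes dictated by the factorisation of $\Delta_\mathcal{Z}$ in Equation~(\ref{eqn:discriminant}), using Proposition~\ref{prop:pencil} throughout. Since $P(x_0)^2$ divides $\Delta_\mathcal{Z}$, the hypothesis $\Delta_\mathcal{Z}\neq 0$ forces $P(x_0)\neq 0$; then the substitution $z=\sqrt{2P(x_0)}\,\tilde z$ is a projective-linear isomorphism of $\mathbb{P}^2$ carrying $\mathcal{D}_{x_0}:\ z^4+2B(x,x_0)z^2+4(\gamma-\delta)^2P(x_0)P(x)=0$ onto $\widetilde{\mathcal D}_{x_0}:\ P(x_0)\tilde z^4+B(x,x_0)\tilde z^2+(\gamma-\delta)^2P(x)=0$, so $\widetilde{\mathcal D}_{x_0}\cong\mathcal D_{x_0}$ and Proposition~\ref{prop:pencil}(1) gives smoothness, irreducibility and genus three. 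For the degenerate fibres, Equation~(\ref{eqn:discriminant}) together with $\nu\neq 0$ and $\mu^2-\nu\kappa\neq 0$ from Equation~(\ref{eqn:constraints}) shows $\Delta_\mathcal{Z}=0$ consists of the four zeros of $P(x_0)$ and the zeros of $\kappa P(x_0)^2+2\mu P(x_0)Q(x_0)+\nu Q(x_0)^2$; as a binary quadratic in $(P(x_0):Q(x_0))$ the latter factors over $\mathbb{C}$ as $\prod_{\pm}\big(\kappa P(x_0)+(\mu\pm\sqrt{\mu^2-\kappa\nu})Q(x_0)\big)$, each factor contributing four zeros. Using $\operatorname{Discr}_xQ=S(0)^2\operatorname{Discr}_xP\neq 0$ and the fact that on $P(x_0)=0$ one has $Q(x_0)=-\tfrac14P'(x_0)^2\neq 0$ (so $P$ and $Q$ have no common root), these twelve points are distinct and the first four are disjoint from the other eight for very general data; the grouping into three sets of four mutually isomorphic curves will be visible from the computations below, since in each case the outcome depends only on $\gamma,\delta$ and the coefficients of $P$, not on the chosen root.

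Next I would handle the four fibres over $P(x_0)=0$, the geometrically substantial case. Distinctness of the roots of $P$ gives $\partial_xR(x,x_0)|_{x=x_0}=\tfrac12P'(x_0)\neq 0$, so $R(x,x_0)=(x-x_0)\tilde R(x)$ with $\tilde R$ linear; from $R^2+R_1(x-x_0)^2=P(x)P(x_0)=0$ one gets $R_1=-\tilde R^2$, and substituting into the conics yields $q_i=g_i^2$ and $B(x,x_0)=g_1(x)g_2(x)$ with $g_i(x)=\varepsilon_i(x-x_0)-2\tilde R(x)$ linear, $\{\varepsilon_1,\varepsilon_2\}=\{\gamma,\delta\}$. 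Hence $\mathcal D_{x_0}$ breaks into the double line $\{z=0\}$ and the smooth conic $\{z^2=-2g_1g_2\}$; passing to $\widetilde{\mathcal D}_{x_0}$ via the blow-up $\tilde z^2=z^2/(2P(x_0))$, the conic is pushed to infinity and contributes a rational curve in the fibre of the resolved total space, while expanding the root of $z^2$ nearest $0$ in powers of $P(x_0)$ shows that the sheet through $\{z=0\}$ limits to the reduced curve $\{g_1g_2\,\tilde z^2+(\gamma-\delta)^2P(x)=0\}$, meeting the rational component nodally. It then remains to identify this curve with $\mathcal C'$: specialising the correspondence Equation~(\ref{eqn:correspondence}) to $P(x_0)=0$ gives $\big(\xi(x-x_0)-2\tilde R(x)\big)^2=0$, so $\xi=2\tilde R(x)/(x-x_0)$ is a M\"obius change of variable under which $g_i=(\varepsilon_i-\xi)(x-x_0)$, while Lemma~\ref{lem:AJM} with base point the Weierstrass point $w_0=0$ gives $S(\xi)=P'(x_0)^2P(x)/(x-x_0)^4$. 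Setting $W=g_1g_2\,\tilde z$ turns the component's equation into $W^2=-(\gamma-\delta)^2P(x)g_1(x)g_2(x)$, which under these substitutions is proportional to $(x-x_0)^6(\xi-\gamma)(\xi-\delta)S(\xi)$; rescaling $W$ to a variable $\eta$ yields $\eta^2=(\xi-\gamma)(\xi-\delta)S(\xi)$, i.e., $\mathcal C'$ of Equation~(\ref{eqn:normalization}).

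Finally, for the eight remaining fibres one again has $P(x_0)\neq 0$, hence $\widetilde{\mathcal D}_{x_0}\cong\mathcal D_{x_0}$, a plane quartic which, being singular or reducible by Proposition~\ref{prop:pencil}(1), must be analysed directly. From the computation in the proof of Proposition~\ref{prop:pencil}, its singular locus lies over the multiple roots of $\Phi(x)=B(x,x_0)^2-4(\gamma-\delta)^2P(x_0)P(x)=q_1(x,x_0)q_2(x,x_0)$ with $z^2=-B(x)$; one reads off the multiplicity structure of $\Phi$ there from the squared factor $\kappa P(x_0)^2+2\mu P(x_0)Q(x_0)+\nu Q(x_0)^2$ in $\Delta_\mathcal{Z}$ and the resultant identity $\operatorname{Discr}_x(q_1q_2)=\operatorname{Discr}_x(q_1)\operatorname{Discr}_x(q_2)\operatorname{Res}_x(q_1,q_2)^2$, and checks the local analytic type of the resulting singularity. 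Since every reducible plane quartic has $\delta$-invariant at least two, establishing that $\widetilde{\mathcal D}_{x_0}$ carries a single ordinary node then forces it to be irreducible of geometric genus $3-1=2$. The main obstacle is precisely this last step — locating the singular point, excluding worse singularity types and an accidental second node, and deducing irreducibility from the $\delta$-invariant count — which requires careful book-keeping with $\Phi$, its leading coefficient, and $\operatorname{Res}_x(q_1,q_2)$; by contrast the $\mathcal C'$-identification of the preceding paragraph, though computational, is routine once the factorisations $q_i=g_i^2$ and $B=g_1g_2$ are in hand.
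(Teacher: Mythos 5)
Your proposal is correct, and for the central computation it takes a genuinely different and more conceptual route than the paper. Where the paper identifies the genus-two component over $P(x_0)=0$ by writing down an explicit sextic model $Z^2=(\gamma-\delta)P(x)\big(3(x-x''_n)\gamma+\alpha x+\beta\big)\big(3(x-x''_n)\delta+\alpha x+\beta\big)$ and then verifying that its Igusa--Clebsch invariants agree with those of $\eta^2=(\xi-\gamma)(\xi-\delta)S(\xi)$, you derive the isomorphism structurally: the identity in Equation~(\ref{eqn:relat}) forces $R=(x-x_0)\tilde R$ and $R_1=-\tilde R^2$ at a root of $P$, whence $q_i=g_i^2$ and $B=g_1g_2$, and your substitution $\xi=2\tilde R(x)/(x-x_0)$ is precisely the Abel--Jacobi map of Lemma~\ref{lem:AJM} based at the Weierstrass point $(x_0,0)$, under which $S(\xi)=P'(x_0)^2P(x)/(x-x_0)^4$ and $g_i=(\varepsilon_i-\xi)(x-x_0)$. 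I checked these identities (including $B^2-4(\gamma-\delta)^2P(x_0)P(x)=q_1q_2$ and the sign being absorbed into the rescaling of $\eta$ over $\mathbb{C}$); they are correct. This route explains \emph{why} the normalization is $(\xi-\gamma)(\xi-\delta)S(\xi)$, makes the independence of the chosen root of $P$ --- hence the mutual isomorphism of those four fibres --- immediate, and even accounts for the $\mathbb{P}^1$ component of the degenerate fibre, which the paper's proof passes over in silence. Elsewhere the two arguments essentially coincide: the isomorphism $\widetilde{\mathcal D}_{x_0}\cong\mathcal D_{x_0}$ for $P(x_0)\neq 0$, the count of twelve singular fibres from Equation~(\ref{eqn:discriminant}) using $Q(x_0)=-\tfrac14 P'(x_0)^2\neq 0$ at roots of $P$, and --- for the eight fibres over $\kappa P+(\mu\pm\sqrt{\mu^2-\kappa\nu})Q=0$ --- an outline rather than a completed verification that the unique singularity is an ordinary node. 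You flag this last step as the main remaining work; note that the paper's own proof dispatches it with ``one checks \dots easily seen to be a node,'' so your sketch via the factorization $\operatorname{Discr}_x(q_1q_2)=\operatorname{Discr}_x(q_1)\operatorname{Discr}_x(q_2)\operatorname{Res}_x(q_1,q_2)^2$ together with $\operatorname{Discr}_x(q_i)\propto P(x_0)$, followed by the $\delta$-invariant count to force irreducibility, is at least as detailed as the published argument and is a sound way to finish.
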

\begin{proof}
One checks that the general element $\mathcal{D}_{x_0}$ is smooth and irreducible. It is bielliptic and of genus three by construction. For $P(x_0) \not = 0$ we obviously have $\widetilde{\mathcal{D}}_{x_0} \cong \mathcal{D}_{x_0}$ and $\widetilde{\mathcal{Q}}_{x_0} \cong \mathcal{Q}_{x_0}$.  Equation~(\ref{eqn:discriminant}) shows that there are twelve singular curves and one checks by an explicit coordinate transformation that the singular curves form three sets of four isomorphic curves. A curve over a root of $\kappa P(x_0) + (\mu \pm \sqrt{\mu^2 - \kappa \nu}) Q(x_0)=0$ is an irreducible curve of geometric genus two with one double point, which is easily seen to be a node.
\par  Let the polynomial $P(x)$ be given by $P(x)=\prod_n (x-x''_n)$. Setting $x_0=x''_n$ in Equation~(\ref{eqn:genus-three_blow-up}) and rescaling $\tilde{z} = i Z/(3 (\gamma-\delta) B(x, x_0))$ yields 
\beq
\label{eqn:normalization0}
 Z^2 =  \big(\gamma- \delta\big) \, P(x) \, \Big( 3 \, (x-x''_n) \gamma + \alpha \, x + \beta \Big)   \Big( 3 \, (x-x''_n) \delta + \alpha \, x + \beta \Big) \,,
\eeq 
where $\alpha$ and $\beta$ are cubic and quadratic polynomials in the coefficients $x''_1, \dots, x''_4$, respectively, symmetric in $\{ x''_m \}_{m\not = n}$. Equation~(\ref{eqn:normalization}) obviously defines a curve of genus two. We compute its Igusa-Clebsch invariants, using the same normalization as in \cites{MR3712162,MR3731039}.  Denoting the Igusa-Clebsch invariants of the curve of genus two in Equation~(\ref{eqn:normalization0}) and Equation~(\ref{eqn:normalization}) by $[ I_2 : I_4 : I_6 : I_{10} ] \in \mathbb{P}(2,4,6,10)$ and $[ I'_2 : I'_4 : I'_6 : I'_{10} ]$, respectively, one checks that
\beq
[ I_2 : I_4 : I_6 : I_{10} ] = [ r^2 I'_2 \ : \ r^4I'_4 \ : \ r^6I'_6 \ : \ r^{10}I'_{10} ] = [ I'_2 : I'_4 : I'_6 : I'_{10} ] \,,
\eeq
with $r=9(\gamma-\delta)(x''_1-x''_2)(x''_1-x''_3)(x''_1-x''_4)$. Thus, the two curves of genus two are isomorphic.
\end{proof}
We also have the following:
\begin{proposition}
\label{prop:hyperelliptic}
The curve $\widetilde{\mathcal{D}}_{x_0}$ in Equation~(\ref{eqn:genus-three_blow-up}) admits a hyperelliptic involution if and only if $x_0$ is a root of $[ P, Q ]_{x_0}=0$. There are six such hyperelliptic elements, and they are smooth if and only if $2(q_\gamma \pm q_\delta) \not = 0$ in Equation~(\ref{eqn:constraintsEC}).
\end{proposition}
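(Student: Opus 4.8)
The plan is to reduce the hyperellipticity of $\widetilde{\mathcal D}_{x_0}$ to the position of the four branch points of its bielliptic structure, and then to render that position condition explicit in $x_0$. By Proposition~\ref{prop:pencil}(3) together with Lemma~\ref{lem:sections} the double cover $\pi_{x_0}\colon\widetilde{\mathcal D}_{x_0}\to\widetilde{\mathcal Q}_{x_0}$ is branched exactly over the four points $b_n=(x,\tilde y)=(x''_n,0)$, $n=1,\dots,4$, lying above the roots $x''_1,\dots,x''_4$ of $P(x)$ on the genus-one curve $\widetilde{\mathcal Q}_{x_0}$. I would invoke the classical criterion that a smooth bielliptic curve $C$ of genus three with bielliptic map onto an elliptic curve $\mathcal E_0$, branched over $\{p_1,\dots,p_4\}\subset\mathcal E_0$, is hyperelliptic if and only if its bielliptic involution extends to a $(\mathbb Z/2\mathbb Z)^2$-action on $C$; equivalently, if and only if there is a pairing $\{i,j\},\{k,l\}$ of $\{1,2,3,4\}$ with $p_i+p_j=p_k+p_l$ in $\operatorname{Pic}(\mathcal E_0)$ such that the involution $P\mapsto (p_i+p_j)-P$ of $\mathcal E_0$ fixes the branch divisor and lifts to $C$ (the lifting being an extra two-torsion constraint beyond the divisor relation). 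For the reverse direction this is clear, since a hyperelliptic involution of $C$ is central, commutes with $\jmath$, and hence descends to such an involution of $\mathcal E_0=C/\langle\jmath\rangle$.

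The bridge to the stated equation is the syzygy relating the binary quartic $P$ to its Hessian covariant $Q$. Matching the formulas of Lemma~\ref{lem:AJM} against the Weierstrass equation~(\ref{eqn:EC}) gives $[P,Q]^2=4\bigl(g\,P^3-f\,Q\,P^2-Q^3\bigr)=4\,P^3\,S(-Q/P)$. This identity shows that the critical values of the degree-four map $x_0\mapsto\xi=-Q(x_0)/P(x_0)$ are exactly the three zeros of $S$, i.e.\ the $\xi$-coordinates of the non-trivial two-torsion of $\mathcal E$, with a $(2,2)$-orbit over each; consequently the locus $[P,Q]_{x_0}=0$ consists exactly of the six ramification points of this map, and by Riemann--Hurwitz this number is six for a generic quartic $P$, that is, a generic $\mathcal Q$. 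At such an $x_0$ one has $P(x_0)\neq0$ and $-Q(x_0)/P(x_0)$ equal to a root of $S$, and the quartic $Q(x)+\bigl(-Q(x_0)/P(x_0)\bigr)P(x)$ is, up to a scalar, the square of a quadratic in $x$. Substituting this relation into the equations for $\widetilde{\mathcal Q}_{x_0}$ and for $B(x,x_0)$ and carrying out an explicit birational change of coordinates modelled on Equation~(\ref{eqn:normalization0}) should put $\widetilde{\mathcal D}_{x_0}$ into a hyperelliptic normal form $\upsilon^2=(\text{octic in one variable})$; conversely, for all other $x_0$ the four branch points $b_n$ are in general position, the lifting/divisor condition fails, and $\widetilde{\mathcal D}_{x_0}$ has no automorphism beyond $\jmath$ — which one reads off the explicit model or from a monodromy count for the family of bielliptic structures over $\mathbb P_{x_0}$. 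I expect this final translation — matching the abstract criterion, including the two-torsion twist, with the single equation $[P,Q]_{x_0}=0$ — to be the main obstacle; it is ultimately bookkeeping, organised by the covariant structure of $P$.

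The smoothness statement then follows from the discriminant. A hyperelliptic member $\widetilde{\mathcal D}_{x_0}$ is smooth precisely when $\Delta_{\mathcal Z}(x_0)\neq0$ by Proposition~\ref{prop:pencil}(1), and since $P(x_0)\neq0$ there, Equation~(\ref{eqn:discriminant}) reduces smoothness to $\kappa\,P(x_0)^2+2\mu\,P(x_0)Q(x_0)+\nu\,Q(x_0)^2\neq0$, i.e.\ to $-Q(x_0)/P(x_0)$ not being a root of $\nu\,\xi^2-2\mu\,\xi+\kappa$. Using the definitions~(\ref{eqn:moduli}) with $S'(0)=f$, $S(0)=g$, one checks $2\nu=(\gamma-\delta)^2$, $2\mu=\eta_{q_\gamma}^2+\eta_{q_\delta}^2-(\gamma+\delta)(\gamma-\delta)^2$, and $\mu^2-\nu\kappa=\eta_{q_\gamma}^2\eta_{q_\delta}^2$; hence, by the addition law on $\mathcal E$, the two roots of $\nu\,\xi^2-2\mu\,\xi+\kappa$ are the $\xi$-coordinates of $q_\gamma+q_\delta$ and of $q_\gamma-q_\delta$. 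As $x_0$ runs through the six hyperelliptic values, $-Q(x_0)/P(x_0)$ runs through all three roots of $S$ (two $x_0$'s per root), so some hyperelliptic member is singular if and only if the $\xi$-coordinate of $q_\gamma+q_\delta$ or of $q_\gamma-q_\delta$ is itself a root of $S$, i.e.\ if and only if $2(q_\gamma+q_\delta)=0$ or $2(q_\gamma-q_\delta)=0$. Therefore all six hyperelliptic members are smooth exactly when $2(q_\gamma\pm q_\delta)\neq0$, the stated refinement of the standing hypothesis~(\ref{eqn:constraintsEC}).
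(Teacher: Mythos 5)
Your third paragraph (the smoothness claim) is essentially the paper's own argument and is correct: the paper likewise computes $\bigl\{\xi_{q_\gamma+q_\delta},\,\xi_{q_\gamma-q_\delta}\bigr\}=\bigl\{(\mu\pm\sqrt{\mu^2-\kappa\nu})/\nu\bigr\}$, factors the second factor of $\Delta_{\mathcal Z}$ accordingly, and uses the resultant identity $\operatorname{Res}_x\bigl(\alpha P+\beta Q,[P,Q]\bigr)=2^{-8}\operatorname{Discr}_x(P)^3\beta^6 S(\alpha/\beta)^2$ to conclude that a common zero with $[P,Q]_{x_0}$ occurs exactly when one of $q_\gamma\pm q_\delta$ is $2$-torsion. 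Your syzygy $[P,Q]^2=4\bigl(gP^3-fQP^2-Q^3\bigr)=4P^3S(-Q/P)$ is also correct (it is what the paper uses, in the guise of $\operatorname{Res}_x(\alpha P+\beta Q,[P,Q])$ and again in the Namikawa--Ueno computation), and it gives a clean reason why $[P,Q]_{x_0}$ has six simple roots, two over each nontrivial $2$-torsion point of $\mathcal E$.

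The gap is in the first part, and you have in effect flagged it yourself: the equivalence \emph{hyperelliptic} $\Leftrightarrow$ $[P,Q]_{x_0}=0$ is never established. Your abstract criterion (a pairing $\{i,j\},\{k,l\}$ of the four branch points with $b_i+b_j\sim b_k+b_l$ on $\mathcal Q_{x_0}$, plus a lifting condition for the line bundle $\mathscr N$) is a reasonable starting point, but you do not compute when that condition holds for this family, nor do you show that it is equivalent to the auxiliary point $(x_0,w_0)$ mapping to $2$-torsion on $\mathcal E$ --- these are a priori different conditions on $x_0$, and identifying them is the entire content of the proposition, not ``bookkeeping.'' In particular, the forward implication ``at a root of $[P,Q]_{x_0}$ the curve becomes hyperelliptic'' is asserted via an unexecuted change of coordinates, and the converse ``for all other $x_0$ there is no extra involution'' is deferred to ``the explicit model or a monodromy count'' without either being carried out. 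The paper closes exactly this gap by brute force: a hyperelliptic involution $k$ must commute with $\jmath$, hence descend to $\mathbb P^1_x$ as one of the three fractional-linear involutions $T$ pairing the roots of $P(x)$; the identity $P(T(x))=C(x)^4P(x)$ holds automatically, while the compatibility $B(T(x),x_0)=C(x)^2B(x,x_0)$ imposes a quadratic condition $r_{ijkl}(x_0)=0$ on $x_0$, and the product of the three quadrics over the three pairings equals $-4[P,Q]_{x_0}$. Your lifting condition is precisely the constraint on $B$, and without computing it you cannot conclude either direction. To repair the proof you would need to either reproduce this computation or genuinely derive, from the group law on $\mathcal Q_{x_0}\cong\mathcal E$ and the explicit branch divisor, that $b_i+b_j\sim b_k+b_l$ for some pairing exactly when $S(-Q(x_0)/P(x_0))=0$.
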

\begin{proof}
If $\widetilde{\mathcal{D}}_{x_0}$ admits a hyperelliptic involution $k$ it commutes with the bielliptic involution $\jmath$. As the two involutions commute, $k$ induces a permutation on the fixed points of $\jmath$. For $P(x)=\prod_n (x-x''_n)$ as in the proof of Proposition~\ref{prop:normalization}, we define the fractional linear map $T$ given by
\beq
 x \mapsto T(x) = \frac{-(ab-cd)x+( a b c+a b d-a c d-b c d)}{-(a+b-c-d)x+(ab-cd)} \,,
\eeq
such that $x=T(T(x))$, $T(a)=b$, $T(b)=a$, $T(c)=d$, $T(d)=c$. We then set $a=x''_1$, $b=x''_2$, $c=x''_3$, $d=x''_4$ such that
\beq
 P\Big( T(x) \Big)  = \underbrace{\frac{(x''_1-x''_3)^2(x''_1-x''_4)^2(x''_2-x''_3)^2(x''_2-x''_4)^2}{\big( (x''_1+x''_2-x''_3-x''_4) x -(x''_1x''_2-x''_3x''_4)\big)^4}}_{=: \, C(x)^4} \, P(x) \,.
\eeq
It follows that if $\widetilde{\mathcal{D}}_{x_0}$ admits a hyperelliptic involution, it is of the form
\beq
  k: \quad \Big(x, z \Big) \mapsto \left( \frac{-(ab-cd)x+(a b c+a b d-a c d-b c d)}{-(a+b-c-d)x+(ab-cd)}, z \right) \,.
\eeq  
The rational functions $B\big(T(x)\big) - C(x)^2 \, B(x)$ and $P\big(T(x)\big) - C(x)^4 \, P(x)$ have the only common factor 
\beq
\label{eqn:r_commutator}
 r_{1234}(x_0) = \big(x''_1 + x''_2 - x''_3 - x''_4\big) x_0^2 - 2 \big( x''_1 x''_2 - x''_3 x''_4 \big) x_0 + x''_1 x''_2 \big( x''_3+x''_4\big) - \big(x''_1 + x''_2\big) x''_3 x''_4 \,.
\eeq
One then checks that a permutation of the roots yields
\beq
 r_{1234}(x_0) \, r_{1324}(x_0) \, r_{1423}(x_0) = - 4 \, [ P, Q ]_{x_0} \,. 
\eeq
Moreover, $[ P, Q ]_{x}$ is a polynomial of degree six. 
\par It easily follows that $P(x_0)$ and $[ P, Q ]_{x_0}$ never vanish at the same time, given our assumption in Equation~(\ref{eqn:constraint0}).  One then checks that for $\alpha, \beta \in \mathbb{C}$  with $\beta \not =0$, the resultant satisfies
\beq
\label{eqn:resultant}
 \operatorname{Res}_x\Big( \alpha P + \beta Q, [ P, Q ] \Big) = 2^{-8}  \operatorname{Discr}_x(P)^3  \, \beta^6  S\left(\frac{\alpha}{\beta}\right)^2\,.
\eeq 
Thus,  $\alpha P(x_0) + \beta Q(x_0)$ and $[ P, Q ]_{x_0}$ vanish simultaneously, if and only if $S(\alpha/\beta)=0$ where $S(\xi)$ was given in Equation~(\ref{eqn:EC}).
\par We use the elliptic group law on $\mathcal{E}$ to compute the coordinates of the points $\pm (q_\gamma + q_\delta),  \pm (q_\gamma - q_\delta) \in \mathcal{E}$ with coordinates $(\xi_{q_\gamma+q_\delta}, \, \pm \eta_{q_\gamma+q_\delta})$ and 
$(\xi_{q_\gamma-q_\delta}, \, \pm \eta_{q_\gamma-q_\delta})$, respectively. It follows that
\beq
 \Big\lbrace \xi_{q_\gamma+q_\delta}, \ \xi_{q_\gamma-q_\delta} \Big\rbrace  =  \left\lbrace \frac{\mu + \sqrt{\mu^2 - \kappa \nu}}{\nu},  \frac{\mu - \sqrt{\mu^2 - \kappa \nu}}{\nu} \right\rbrace\,.
\eeq 
The second factor of the discriminant in Equation~(\ref{eqn:discriminant}) is
\beq
\begin{split}
 & \quad \qquad \qquad \kappa \, P(x_0)^2 + 2 \mu \, P(x_0) Q(x_0) + \nu \, Q(x_0)^2 \\
 = & \, \nu \left( \frac{\mu + \sqrt{\mu^2 - \kappa \nu}}{\nu} P(x_0) + Q(x_0) \right)  \left( \frac{\mu - \sqrt{\mu^2 - \kappa \nu}}{\nu} P(x_0) + Q(x_0) \right) \,.
\end{split}
\eeq
Using Equation~(\ref{eqn:resultant}) it follows that $[ P, Q ]_{x_0}$ and the discriminant $\Delta_{\mathcal{Q}_{x_0}}$ do not have a common factor if and only if $S(\xi_{q_\gamma+q_\delta}) \not = 0$ and $S(\xi_{q_\gamma+q_\delta}) \not = 0$. This is equivalent to the points $q_\gamma \pm q_\delta$ \emph{not} being 2-torsion points of $\mathcal{E}$.
\end{proof}
 \subsection{Canonical curves of genus five}
 \label{sec:g5_curves}
We identify the smooth curve of genus three $\mathcal{D}=\widetilde{\mathcal{D}}_{x_0}$ in Equation~(\ref{eqn:genus-three_blow-up}) with its canonical model in the plane $\mathbb{P}^2=\mathbb{P}(X,Y,Z)$ given by $\mathbb{P}^2 \cong |\mathcal{K}_{\mathcal{D}}|^*$, and write
\beq
\label{eqn:g3curve_AG}
 \mathcal{D}: \quad a_0 \, Z^4 +  b_2(X,Y) \, Z^2 + c_4(X,Y) = 0 \,,
\eeq
such that $a_0 = P(x_0)$, $b_2(x,1) = B(x, x_0)$, and $c_4(x,1)= (\gamma-\delta)^2 P(x)$. We will also assume $a_0=P(x_0) \not =0$. 
\par We set $\mathcal{Q}=\widetilde{\mathcal{Q}}_{x_0}$ and the bielliptic involution is $\jmath: [X:Y:Z] \mapsto [X:Y:-Z]$ covers $\pi:  \mathcal{D} \to  \mathcal{Q}$ with branch locus $\mathrm{B} = p''_1 + p''_2 + p''_3 + p''_4 \subset \mathcal{Q}$ given in Lemma~\ref{lem:sections} which is an effective divisor of degree four without multiple points. Let $\mathscr{N}$ be the line bundle corresponding to half of the divisor class of $\mathrm{B}$, then $\mathscr{N}^2=\mathcal{O}_{\mathcal{Q}}(\mathrm{B})$.  Conversely, the data of $(\mathcal{Q}, \mathscr{N}, \mathrm{B})$ determines the double cover $\mathcal{D}$ uniquely up to isomorphism. By slight abuse of notation, we set $\{ p''_1, \dots, p''_4\} = \pi^{-1}(\mathrm{B}) \subset \mathcal{D}$ with $p''_n: [X:Y:Z]=[x''_n:1:0]$ and $P(x''_n)=0$ or, equivalently, $c_4(x''_n,1)=0$ for $1\le n \le4$.
\par The adjunction formula implies that the linear systems $|\mathcal{K}_{\mathcal{Q}}|^*$ and $|\mathscr{N}|^*$ can be identified in the projective plane $|\mathcal{K}_{\mathcal{D}}|^*$ with a point $\mathrm{O}$ and a line $\mathrm{L}_0$, respectively \cite{MR1816214}. There is a classical characterization of the data $(\mathrm{O}, \mathrm{L}_0)$: it is well known that $\jmath$ on a canonical curve of genus three is induced by a projective involution $\tilde{\jmath}$ whose set of fixed points consists of a point $\mathrm{O}$ and a line $\mathrm{L}_0$ such that the intersection $\mathrm{L}_0 \cap \mathcal{D}$ are the fixed points of $\jmath$. Since the points $p''_n: [X:Y:Z]=[x''_n:1:0]$ in Equation~(\ref{eqn:g3curve_AG}) with $c(x''_n,1)=0$ for $1\le n \le4$ are the ramification points of $\pi: \mathcal{D} \to \mathcal{Q}$, we obtain $\mathrm{L}_0 = \mathrm{V}(Z)$ and $\mathcal{D} \cap \mathrm{L}_0 = p''_1 + \dots + p''_4$. The tangent lines at the points $p''_n$ are $\mathrm{V}(X - x''_n \, Y)$ for $1 \le n \le 4$, and they all must pass through the point $\mathrm{O}$ \cite{MR1816214}*{Thm.~2.5} whence $\mathrm{O}: [X:Y:Z]=[0:0:1]$. 
\par On the other hand, $\chi: \mathcal{Q} \to \mathbb{P}^1$ has the ramification divisor $p'_1 + p'_2 + p'_3 + p'_4 \subset \mathcal{Q}$; see Section~\ref{ssec:AJ_K3}. The preimages of $p'_n$ in $\mathcal{D}$ are pairs of points $p'_{n ,\pm}: [X:Y:Z]=[x'_n:1: \pm z'_n]$ with $4 \, a_0 c_4(x'_n,1)-b_2(x'_n,1)^2=0$ and $2 a_0 (z'_n)^2 + b_2(x'_n,1) =0$ for $1\le n \le4$. The tangent line $\mathrm{L}_n$ at $p'_{n ,\pm}$ is given by $\mathrm{L}_n= \mathrm{V}(X - x'_n Y)$, and all $\mathrm{L}_n$ pass through the same point $\mathrm{O}: [X:Y:Z]=[0:0:1]$. The lines $\mathrm{L}_n$ are in fact bitangents with intersection divisors $\mathcal{D} \cap \mathrm{L}_n = 2 p'_{n, +} + 2 p'_{n, -}$. This characterization of the bielliptic structure in terms of bitangents is originally due to Kovalevskaya; see Dolgachev \cite{dolgachev2014endomorphisms} and work by the authors \cite{CMS:2019}:
\begin{theorem}[Kovalevskaya] 
\label{thm:Kowalevskaya}
The point $\mathrm{O}$ is the intersection point of four distinct bitangents $\mathrm{L}_n$ of $\mathcal{D}$ with $1 \le n \le 4$.  Conversely, if a plane quartic has four bitangents $\mathrm{L}_n$ intersecting at a point $\mathrm{O}$, then there exists a bielliptic involution $\imath$ of $\mathcal{D}$ such that the projective involution $\tilde{\imath}$ has $\mathrm{O}$ as its isolated fixed point.
\end{theorem}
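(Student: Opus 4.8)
The plan is to prove the two implications separately, working with the canonical model $\mathcal{D}=\mathrm{V}(F)\subset\mathbb{P}^2=\mathbb{P}(X,Y,Z)$ and the harmonic homology $\tilde{\imath}\colon[X:Y:Z]\mapsto[X:Y:-Z]$, whose fixed locus consists of the isolated point $\mathrm{O}=[0:0:1]$ together with the line $\mathrm{V}(Z)$.

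For the direct implication almost everything has already been assembled in the paragraph preceding the theorem: for the smooth model $\mathcal{D}=\widetilde{\mathcal{D}}_{x_0}$ written as in Equation~(\ref{eqn:g3curve_AG}) we have exhibited the four lines $\mathrm{L}_n=\mathrm{V}(X-x'_nY)$, all passing through $\mathrm{O}$, each with contact divisor $\mathcal{D}\cdot\mathrm{L}_n=2p'_{n,+}+2p'_{n,-}$. It remains only to check that these are four \emph{distinct, genuine} bitangents. The lines are distinct since the $x'_n$ are the roots of $4a_0c_4(x,1)-b_2(x,1)^2$ (which equals $4P(x_0)(\gamma-\delta)^2P(x)-B(x,x_0)^2$), a binary quartic whose discriminant in $x$ is precisely $\Delta_\mathcal{Z}$ of Equation~(\ref{eqn:discriminant}) and hence nonzero for a smooth fiber by Proposition~\ref{prop:pencil}(1); so the $x'_n$ are pairwise distinct. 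Each $\mathrm{L}_n$ is a genuine bitangent, i.e.\ $p'_{n,+}\neq p'_{n,-}$: if $z'_n=0$ then $b_2(x'_n,1)=0$, and since $4a_0c_4(x'_n,1)=b_2(x'_n,1)^2=0$ with $a_0\neq0$ also $c_4(x'_n,1)=0$, so $[x'_n:1:0]$ would be a singular point of $\mathcal{D}$ -- impossible. Hence $\mathrm{O}$ is the common point of four distinct bitangents.

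For the converse, let $\mathcal{D}$ be a smooth plane quartic carrying four distinct bitangents $\mathrm{L}_1,\dots,\mathrm{L}_4$ concurrent at a point $\mathrm{O}$. One first notes $\mathrm{O}\notin\mathcal{D}$: if $\mathrm{O}$ lay on $\mathcal{D}$, a bitangent through $\mathrm{O}$ would meet $\mathcal{D}$ with multiplicity at least $1+2+2=5$ unless $\mathrm{O}$ were one of its two contact points; since $\mathcal{D}$ has a single tangent line at $\mathrm{O}$, at most one of the $\mathrm{L}_n$ could have $\mathrm{O}$ as a contact point, contradicting that there are four. Choose homogeneous coordinates with $\mathrm{O}=[0:0:1]$ and, using the residual $\mathrm{GL}_2$-action on $(X,Y)$, with the line $\mathrm{V}(Y)$ not among the $\mathrm{L}_n$; then $F=f_0Z^4+f_1(X,Y)Z^3+f_2(X,Y)Z^2+f_3(X,Y)Z+f_4(X,Y)$ with $\deg f_k=k$, $f_0=F(\mathrm{O})\neq0$, and $\mathrm{L}_n=\mathrm{V}(X-x_nY)$ with $x_1,\dots,x_4$ pairwise distinct. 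The key normalization is that a shear $[X:Y:Z]\mapsto[X:Y:Z+\ell(X,Y)]$ by a linear form $\ell$ fixes $\mathrm{O}$ and every line through it (it leaves the $X,Y$-coordinates untouched), hence each $\mathrm{L}_n$ and the shape of $F$, and changes $f_1$ into $f_1-4f_0\ell$; choosing $\ell=f_1/(4f_0)$ we may assume $f_1\equiv0$. Restricting $F$ to $\mathrm{L}_n$ then gives, in the coordinate $[Y:Z]$, the binary quartic $f_0Z^4+f_2(x_n,1)Y^2Z^2+f_3(x_n,1)Y^3Z+f_4(x_n,1)Y^4$; since $\mathrm{L}_n$ is a genuine bitangent this must equal $\bigl(\sqrt{f_0}\,Z^2+\beta_nYZ+\gamma_nY^2\bigr)^2$ for some $\beta_n,\gamma_n$. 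Comparing the coefficient of $YZ^3$ gives $\beta_n=0$, and then comparing the coefficient of $Y^3Z$ gives $f_3(x_n,1)=0$. Thus the cubic form $f_3$ vanishes at the four distinct points $[x_n:1]$, so $f_3\equiv0$; therefore $F=f_0Z^4+f_2Z^2+f_4$ is $\tilde{\imath}$-invariant. Finally $\imath:=\tilde{\imath}|_{\mathcal{D}}$ is an involution of $\mathcal{D}$ whose fixed points are the four points $\mathcal{D}\cap\mathrm{V}(Z)=\{f_4=0,\,Z=0\}$, distinct because a multiple root of $f_4$ would be a singular point of $\mathcal{D}$; Riemann--Hurwitz gives $2\cdot3-2=2(2g'-2)+4$, so $g'=1$ and $\imath$ is a bielliptic involution whose projective extension $\tilde{\imath}$ has $\mathrm{O}$ as its isolated fixed point. (This also recovers the fact that the bitangents through $\mathrm{O}$ are precisely the lines $\mathrm{V}(X-xY)$ with $f_2(x,1)^2-4f_0f_4(x,1)=0$, generically four of them; compare \cite{dolgachev2014endomorphisms} and \cite{CMS:2019}.)

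The main obstacle lies entirely in the converse, and it is of a bookkeeping nature: one must ensure the depression $Z\mapsto Z-f_1/(4f_0)$ is legitimate -- this is exactly where $\mathrm{O}\notin\mathcal{D}$, hence $f_0\neq0$, is used -- that it leaves the four bitangent lines unchanged, and that the parameters $x_1,\dots,x_4$ remain pairwise distinct, since the whole argument hinges on a cubic form that vanishes at four distinct points being identically zero. The remaining steps -- the coefficient comparisons on each $\mathrm{L}_n$, the genericity of the coordinate choice, and the Riemann--Hurwitz count -- are routine.
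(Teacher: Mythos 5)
The paper itself offers no proof of this theorem: it is quoted as a classical result of Kovalevskaya, with the forward direction merely instantiated in the paragraph preceding the statement and the rest delegated to \cite{dolgachev2014endomorphisms} and \cite{CMS:2019}. Your write-up therefore supplies an argument the paper omits, and most of it is correct. The converse is a clean, self-contained proof: the observation that four concurrent bitangents force $\mathrm{O}\notin\mathcal{D}$, the shear normalization killing $f_1$, the coefficient comparison on each bitangent forcing the binary cubic $f_3$ to vanish at four distinct points (hence identically), and the Riemann--Hurwitz count are all sound. In the forward direction, the distinctness of the lines via $\operatorname{Discr}_x\big(B^2-4(\gamma-\delta)^2P(x_0)P\big)=\Delta_\mathcal{Z}\neq0$ matches the computation inside the proof of Proposition~\ref{prop:pencil}.

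One step is wrong, although it is a step the theorem as stated does not need. You claim that if $z'_n=0$, so that $b_2(x'_n,1)=c_4(x'_n,1)=0$, then $[x'_n:1:0]$ would be a singular point of $\mathcal{D}$. It would not: for $F=a_0Z^4+b_2Z^2+c_4$ one has $F_Z=0$ automatically along $Z=0$, and $[x:1:0]$ with $c_4(x,1)=0$ is singular if and only if $x$ is a \emph{multiple} root of $c_4(x,1)=(\gamma-\delta)^2P(x)$, which is already excluded by $\operatorname{Discr}_x(P)\neq0$. What actually happens when $b_2$ and $c_4$ share the root $x'_n$ is that $\mathrm{L}_n$ degenerates to a hyperflex line, with contact divisor $4\,[x'_n:1:0]$ at a smooth point of $\mathcal{D}$. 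That degeneration is in fact ruled out here, but for a different reason: a common root $x''_m$ of $B(\cdot,x_0)$ and $P$ forces $\gamma$ or $\delta$ to equal $2R(x''_m,x_0)/(x''_m-x_0)^2$, the Abel--Jacobi image of a Weierstrass point of $w^2=P(x)$ and hence a root of $S$, contradicting $\mu^2-\nu\kappa=S(\gamma)S(\delta)\neq0$ from Equation~(\ref{eqn:constraints}). Since a hyperflex line is still a bitangent, the four distinct lines $\mathrm{L}_n$ are four distinct bitangents through $\mathrm{O}$ in any case, so the theorem survives; you should either drop the ``genuine bitangent'' claim or justify it along the lines just indicated.
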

\par It is well known that a smooth plane quartic has exactly 28 bitangents; together with the points of order two on $\operatorname{Jac}(\mathcal{D})$ they have a rich symmetry, called the \emph{$64_{28}$-symmetry}.  The established normal form for $\widetilde{\mathcal{D}}_{x_0}$ in Equation~(\ref{eqn:g3curve_AG}) determines a grouping of four bielliptic tangents into two pairs as follows: because of $\mathcal{D} \cap \mathrm{L}_n = 2 p'_{n, +} + 2 p'_{n, -}$, each divisor $2 p'_{n, +} + 2 p'_{n, -}$ for $1 \le n \le 4$ is a canonical divisor, and $\Theta_n= p'_{n, +} + p'_{n, -}$ is a theta divisor, i.e., a point in $\operatorname{Pic}^2(\mathcal{D})$ such that $2\, \Theta_n \sim  \mathcal{K}_{\mathcal{D}}$. The difference of any pair of theta divisors $\Theta_n - \Theta_m$ is a point of order two in $\operatorname{Jac}(\mathcal{D})$. Since $\sum_n \Theta_n \sim  2K_\mathcal{D}$ there exists a conic $\mathrm{C}_0$ that cuts out the divisor $\sum_n p'_{n, +} + p'_{n, -}$ on $\mathcal{D}$. Since $2 \, \mathrm{C}_0$ and $\mathrm{L}_1+ \dots + \mathrm{L}_4$ cut out the same divisor on $\mathcal{D}$, the equation for $\mathcal{D}$ can be re-written as
\beq
  \mathcal{D}: \quad  q_0^2 = l_1 \cdots l_4 \,,
\eeq  
where $\mathrm{L}_n = \mathrm{V}(l_n)$ and $\mathrm{C}_0 = \mathrm{V}(q_0)$ with $q_0(X,Y,Z) = 2 a_0 Z^2 +b_2(X,Y)$. Notice that because of $\Theta_n - \Theta_m \sim  \Theta_n + \Theta_m - K_\mathcal{D}$, the differences of any two pairs of theta divisors always add up to zero, i.e., for $\{ m, n, r, s \} = \{ 1, 2, 3, 4\}$ we have
\beq
\label{eqn:sum_divisors}
 \Big( \Theta_m - \Theta_n \Big) +  \Big( \Theta_r - \Theta_s \Big)  \sim \sum^4_{n=1} \Theta_n - 2 \, K_\mathcal{D} =0    \,.
\eeq 
On the other hand, grouping the four bitangents from Theorem~\ref{thm:Kowalevskaya} into two pairs or, equivalently, the choice of $\pm (\Theta_m - \Theta_n) \in \operatorname{Jac}(\mathcal{D})[2]$ (or $\pm (\Theta_r - \Theta_s)$), amounts to combining pairs of lines into two quadrics $q_1 = l_m l_n$ and $q_2 =l_r l_s$ and writing the bielliptic curve $\mathcal{D}$ as plane projective model
\beq
\label{eqn:normal_form}
 q_0\big(X,Y, Z\big)^2 = q_1\big(X,Y \big) \,  q_2\big(X,Y \big)  \,.
\eeq 
Our construction in Section~\ref{ssec:AJ_K3} naturally provides such a grouping of bitangents into two pairs for $\mathcal{D}=\widetilde{\mathcal{D}}_{x_0}$. In fact, for the normal form given in Equation~(\ref{eqn:g3curve_AG}) the three conics $\mathrm{C}_i = \mathrm{V}(q_i)$ with $0\le i \le 2$ are given by
\beq
\label{eqn:g5_quadrics}
\begin{split}
 q_1(x,1) & = \gamma^2  (x - x_0)^2  - 4 \, \gamma \, R(x, x_0)  - 4 R_1(x, x_0) \,\\
 q_2(x,1) & = \delta^2  (x - x_0)^2  - 4 \, \delta \, R(x, x_0)  - 4 R_1(x, x_0)  \,\\
 q_0(x,1,\tilde{z}) & = 2 \, P(x_0)\, \tilde{z}^2 +  \gamma \delta  \,  (x - x_0)^2  - 2 (\gamma+\delta) \, R(x, x_0) - 4 R_1(x, x_0)  \,.
\end{split}
\eeq
\par It was proven in \cite{MR2406115} that the curves of genus three of the form~(\ref{eqn:normal_form}) admit an unramified double cover $\rho'_{x_0}: \mathcal{F}_{x_0} \to \widetilde{\mathcal{D}}_{x_0}$ where the double cover $\mathcal{F}_{x_0}$ is a non-hyperelliptic curve of genus five given as the intersection of the three quadrics $Q_i$ with $0\le i \le 2$ in $\mathbb{P}^4=\mathbb{P}(V,W,X,Y,Z)$ given by
\beq
\label{eqn:genus_5_curve}
\mathcal{F}_{x_0}: \quad \left\lbrace \begin{array}{lclcl} 0 &=& Q_0(V,W,X,Y,Z)& = & q_0(X,Y,Z) -VW  \\  0  &=&  Q_1(V,W,X,Y)& = & q_1(X,Y)  -V^2 \\   0 &=& Q_2(V,W,X,Y)& = & q_2(X,Y) - W^2\end{array} \right. .
\eeq
The involution
\beq
\label{eqn:genus_5_involution}
 \imath': \mathbb{P}^4 \to \mathbb{P}^4 \,, \quad [V: W: X: Y:Z ] \mapsto [-V: -W: X: Y:Z ] \,,
\eeq
interchanges the sheets of the double cover $\rho'_{x_0}: \mathcal{F}_{x_0} \to \widetilde{\mathcal{D}}_{x_0}$.  Conversely, the canonical model of any non-hyperelliptic, (non-trigonal) curve of genus five is the intersection of three quadrics in $\mathbb{P}^4$ by Petri’s Theorem~\cite{MR770932}*{p.~131}. We have the following:
\begin{lemma}
The involution $\imath$ has no fixed points iff $\Delta_\mathcal{Z} \!\!\mid_{x_0} \not =0$ in Equation~(\ref{eqn:discriminant}).
\end{lemma}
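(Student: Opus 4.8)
The plan is to show that the involution $\imath'$ of Equation~\eqref{eqn:genus_5_involution}, restricted to $\mathcal{F}_{x_0}$, has a fixed point exactly when the genus-one fiber $\mathcal{Q}_{x_0}$ degenerates, i.e.\ exactly when $\Delta_\mathcal{Z}|_{x_0}=0$. Throughout I keep the standing assumption $a_0=P(x_0)\neq 0$ of this subsection and freely pass between the binary forms $q_i(X,Y)$ of Equation~\eqref{eqn:g5_quadrics} and their dehomogenizations $q_i(x,1)$, $x=X/Y$. First I locate the fixed locus: the fixed set of $\imath'$ in $\mathbb{P}^4$ is $\{V=W=0\}\cong\mathbb{P}^2_{[X:Y:Z]}$ together with the line $\{X=Y=Z=0\}$; on that line $Q_1=q_1(X,Y)-V^2=0$ reads $-V^2=0$ since $q_1$ is homogeneous of degree two, and likewise $W=0$, so $\mathcal{F}_{x_0}$ misses it, while $Q_0(0,0,0,0,1)=q_0(0,0,1)=2P(x_0)\neq0$ excludes $[0:0:0:0:1]$ too. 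Hence every $\imath'$-fixed point of $\mathcal{F}_{x_0}$ has the form $[0:0:X:Y:Z]$ with $(X,Y)\neq(0,0)$ and $q_0(X,Y,Z)=q_1(X,Y)=q_2(X,Y)=0$, and conversely every such point lies on $\mathcal{F}_{x_0}$ and is fixed. Equivalently these are the ramification points of $\rho'_{x_0}\colon\mathcal{F}_{x_0}\to\widetilde{\mathcal{D}}_{x_0}$, $[V:W:X:Y:Z]\mapsto[X:Y:Z]$, which is ramified precisely over the points of $\widetilde{\mathcal{D}}_{x_0}$ where $q_1=q_2=0$ (there $q_0^2=q_1q_2=0$ forces $q_0=0$). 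So it suffices to decide when $\mathrm{V}(q_0,q_1,q_2)\subset\mathbb{P}^2$ is non-empty.

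Next I simplify this system. From Equation~\eqref{eqn:g5_quadrics} one gets $q_1-q_2=(\gamma-\delta)\big((\gamma+\delta)(X-x_0Y)^2-4R(X,Y)\big)$ and $\delta q_1-\gamma q_2=(\gamma-\delta)\big(\gamma\delta(X-x_0Y)^2+4R_1(X,Y)\big)$, where $R(X,Y),R_1(X,Y)$ denote the degree-two homogenizations of $R(x,x_0),R_1(x,x_0)$. Since $\gamma\neq\delta$ by Equation~\eqref{eqn:constraints}, a point of $\mathbb{P}^1_{[X:Y]}$ with $q_1=q_2=0$ must satisfy $R(X,Y)=\tfrac{\gamma+\delta}{4}(X-x_0Y)^2$ and $R_1(X,Y)=-\tfrac{\gamma\delta}{4}(X-x_0Y)^2$, and conversely. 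Substituting these into $q_0=2P(x_0)Z^2+b_2(X,Y)$ with $b_2=\gamma\delta(X-x_0Y)^2-2(\gamma+\delta)R(X,Y)-4R_1(X,Y)$ (Equation~\eqref{eqn:def_B}) collapses $q_0=0$ to $Z^2=\tfrac{(\gamma-\delta)^2}{4P(x_0)}(X-x_0Y)^2$, which is solvable for $Z\in\mathbb{C}$ because $P(x_0)\neq 0$; moreover a common zero of $q_1,q_2$ is never $[x_0:1]$ unless $\gamma=\delta$, as one checks from the identity~\eqref{eqn:relat}. Therefore $\mathrm{V}(q_0,q_1,q_2)\neq\emptyset$ if and only if the two binary quadratics $q_1(X,Y)$ and $q_2(X,Y)$ have a common root in $\mathbb{P}^1$.

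It remains to identify this with $\Delta_\mathcal{Z}|_{x_0}=0$. By construction $\Delta_\mathcal{Z}|_{x_0}$ is the discriminant of the quartic fiber $\mathcal{Q}_{x_0}\colon w^2=q_1(x,1)q_2(x,1)$ of Equation~\eqref{eqn:genus-one}, so $\Delta_\mathcal{Z}|_{x_0}=0$ iff the binary quartic $q_1q_2$ is not squarefree; by $\operatorname{Discr}_x(q_1q_2)=\operatorname{Discr}_x(q_1)\operatorname{Discr}_x(q_2)\operatorname{Res}_x(q_1,q_2)^2$ this happens iff $q_1$ has a double root, or $q_2$ has a double root, or $q_1,q_2$ share a root. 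Now $q_i$ is the fiber over $\xi=\gamma_i$ ($\gamma_1=\gamma$, $\gamma_2=\delta$) of the projection to the $\xi$-line of the bidegree-$(2,2)$ curve of Equation~\eqref{eqn:correspondence}; this curve has arithmetic genus one and, by Lemma~\ref{lem:AJM} and the discussion following Equation~\eqref{eqn:correspondence}, is birational (hence, being bidegree $(2,2)$, isomorphic) to the smooth elliptic curve $\mathcal{E}$, with the $\xi$-projection corresponding to the quotient of $\mathcal{E}\colon\eta^2=S(\xi)$ by negation, which is branched over the three roots of $S$ and over $\xi=\infty$. Hence $q_i$ has a double root only if $S(\gamma_i)=0$, i.e.\ only if $q_\gamma$ (resp.\ $q_\delta$) is a $2$-torsion point of $\mathcal{E}$; but $S(\gamma)S(\delta)=\mu^2-\nu\kappa\neq 0$ by Equations~\eqref{eqn:constraints} and~\eqref{eqn:constraintsEC}. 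Consequently $\Delta_\mathcal{Z}|_{x_0}=0$ if and only if $q_1,q_2$ have a common root, and combining with the previous two paragraphs, $\imath'$ has a fixed point on $\mathcal{F}_{x_0}$ if and only if $\Delta_\mathcal{Z}|_{x_0}=0$. (As a cross-check for the direction $\Delta_\mathcal{Z}|_{x_0}\neq 0\Rightarrow$ no fixed points: then $\widetilde{\mathcal{D}}_{x_0}$ is smooth by Proposition~\ref{prop:normalization}, hence of the form~\eqref{eqn:normal_form}, so $\rho'_{x_0}$ is an \emph{unramified} double cover and $\imath'$ is fixed-point free.)

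The step I expect to be the genuine obstacle is the claim that, under the standing constraints, each $q_i$ has simple roots, equivalently that any degeneration of the quartic $q_1q_2$ must come from a shared root of $q_1$ and $q_2$ rather than from an individual double root. Identifying $\operatorname{Discr}_x(q_i(x,1))$ as a nonzero scalar multiple of $S(\gamma_i)$ is the technical heart here; it is cleanest through the correspondence/Jacobian geometry supplied by Lemma~\ref{lem:AJM} (so that the branch locus of the projection of the curve~\eqref{eqn:correspondence} to the $\xi$-line is exactly $\{S=0\}\cup\{\infty\}$), but it can also be pushed through by a direct elimination starting from the identity~\eqref{eqn:relat}.
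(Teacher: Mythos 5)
Your proof is correct and follows essentially the same route as the paper's: both identify the fixed points of $\imath'$ on $\mathcal{F}_{x_0}$ with the common zeros of $q_0,q_1,q_2$, reduce this (for $P(x_0)\neq 0$) to $q_1$ and $q_2$ sharing a root in $\mathbb{P}^1$, and match that condition with the vanishing of $\Delta_\mathcal{Z}\!\mid_{x_0}$ via $\operatorname{Discr}_x(q_1q_2)=\operatorname{Discr}_x(q_1)\operatorname{Discr}_x(q_2)\operatorname{Res}_x(q_1,q_2)^2$. You in fact supply the step the paper only asserts in passing --- one has $\operatorname{Discr}_x(q_i)=16\,P(x_0)\,S(\gamma_i)$, which is nonzero under the standing constraints, so no individual $q_i$ can acquire a double root --- and the only point you set aside is the case $P(x_0)=0$ (excluded by the subsection's running assumption $a_0=P(x_0)\neq 0$ but treated in the paper's own proof), where $\Delta_\mathcal{Z}\!\mid_{x_0}=0$ and the fixed locus sits over $P(x)=P(x_0)=0$, so the equivalence still holds.
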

\begin{proof}
First assume $P(x_0)\not=0$: the quadrics $q_1$ and $q_2$ have a common zero if $\Delta_\mathcal{Z} \mid_{x_0} =0$. Since
\beq
  q_0(x,1,\tilde{z})  = 2 \, P(x_0)\, \tilde{z}^2  + \frac{1}{2} \Big(q_1(x,1)  + q_2(x,1) - (\gamma-\delta)^2 (x - x_0)^2 \Big) \,,
\eeq
we can then solve $q_0=0$ to find the fixed points of the involution. Next, we observe that the discriminants $\operatorname{Discr}_x(q_1)$ and $\operatorname{Discr}_x(q_2)$ and the resultant $\operatorname{Res}_x(R, x-x_0)$ are all proportional to $P(x_0)$. Using Equation~(\ref{eqn:relat}) we thus have a fixed locus for the involution for $P(x)=P(x_0)=0$.
\end{proof}
\begin{remark}
\label{rem:hrouping_bielliptic_tangents}
The constructed unramified double cover $\rho'_{x_0}: \mathcal{F}_{x_0} \to \widetilde{\mathcal{D}}_{x_0}$ corresponds to choosing one out of three possible groupings of the four marked bielliptic tangents into two pairs. Each choice is determined by an element $\mathrm{D}=\pm(\Theta_m - \Theta_n)\in \operatorname{Jac}(\mathcal{D})[2]$, or, equivalently, $\mathrm{D}=\pm(\Theta_k - \Theta_l)$ with $\{ k, l, m, n \} = \{ 1, 2, 3, 4\}$; see Equation~(\ref{eqn:sum_divisors}). In turn, $\mathrm{D}$ is a divisor of degree zero with associated line bundle $\mathscr{L}'=\mathcal{O}_\mathcal{D}(\mathrm{D})$ satisfying $\mathscr{L}^{\prime \, \otimes 2} = \mathcal{O}_\mathcal{D}$. The zero section of $\mathscr{L}'$ then determines the unramified double cover $\rho'_{x_0}: \mathcal{F}_{x_0} \to \widetilde{\mathcal{D}}_{x_0}$ uniquely (up to isomorphism).
\end{remark}
\par We also have the analogue of Lemma~\ref{lem:sections}:
\begin{lemma}
\label{lem:sections_g5}
On $\mathcal{F}_{x_0}$ there are eight points -- rational over $\mathbb{Q}(x_0)$ -- with $X=x''_n$, $Y=1$, $Z=0$ where $\{x''_n\}_{n=1}^4$ are the roots of $P(x)=0$.
\end{lemma}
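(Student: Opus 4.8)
The eight points in question should be precisely the two $\rho'_{x_0}$-preimages of each of the four points $p''_n=[x''_n:1:0]\in\widetilde{\mathcal{D}}_{x_0}$ furnished by Lemma~\ref{lem:sections}. Writing $\widetilde{\mathcal{D}}_{x_0}$ in the form $q_0^2=q_1q_2$ of Equation~(\ref{eqn:normal_form}) and reading off the defining quadrics~(\ref{eqn:genus_5_curve}) of $\mathcal{F}_{x_0}$, a point of $\mathcal{F}_{x_0}$ lying over $p''_n$ is $[V:W:x''_n:1:0]$ with $V^2=q_1(x''_n,1)$, $W^2=q_2(x''_n,1)$ and $VW=q_0(x''_n,1,0)$; the compatibility $(VW)^2=V^2W^2$ of this system is nothing but the statement that $p''_n$ lies on $\widetilde{\mathcal{D}}_{x_0}$. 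Hence the entire content of the lemma is the rationality over $\mathbb{Q}(x_0)$, i.e.\ the assertion that the two quadratics $q_1(x''_n,1)$, $q_2(x''_n,1)$ in $x_0$ are perfect squares.

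The plan is to extract the square roots from the factorization identity~(\ref{eqn:relat}). Setting $x=x''_n$ there, the term $P(x)P(x_0)$ vanishes since $P(x''_n)=0$, which leaves $R_1(x''_n,x_0)\,(x''_n-x_0)^2=-R(x''_n,x_0)^2$. Substituting this into $q_1(x''_n,1)=\gamma^2(x''_n-x_0)^2-4\gamma R(x''_n,x_0)-4R_1(x''_n,x_0)$ and completing the square gives
\[
 (x''_n-x_0)^2\,q_1(x''_n,1)=\bigl(\gamma(x''_n-x_0)^2-2R(x''_n,x_0)\bigr)^2 ,
\]
and the bracketed polynomial in $x_0$ vanishes at $x_0=x''_n$ because $R(x''_n,x''_n)=P(x''_n)=0$; it is therefore divisible by $(x_0-x''_n)$, so that
\[
 V_n(x_0):=\frac{\gamma(x''_n-x_0)^2-2R(x''_n,x_0)}{x''_n-x_0}
\]
is a polynomial of degree at most one in $x_0$ with $V_n(x_0)^2=q_1(x''_n,1)$; likewise $W_n(x_0)$, defined with $\delta$ in place of $\gamma$, satisfies $W_n(x_0)^2=q_2(x''_n,1)$. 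The same manipulation applied to $q_0(x''_n,1,0)=B(x''_n,x_0)$ yields $(x''_n-x_0)^2\,q_0(x''_n,1,0)=\bigl(\gamma(x''_n-x_0)^2-2R(x''_n,x_0)\bigr)\bigl(\delta(x''_n-x_0)^2-2R(x''_n,x_0)\bigr)$, that is $V_n(x_0)W_n(x_0)=q_0(x''_n,1,0)$; in particular this recovers the polynomial section $w''_n=B(x''_n,x_0)$ of Lemma~\ref{lem:sections} as the product $V_nW_n$. Consequently $P_n:=[\,V_n(x_0):W_n(x_0):x''_n:1:0\,]$ lies on $\mathcal{F}_{x_0}$ for every $x_0$ and is rational over $\mathbb{Q}(x_0)$ once the roots $x''_n$ of $P$ are adjoined.

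Finally I would apply the involution $\imath'$ of Equation~(\ref{eqn:genus_5_involution}): $\imath'(P_n)=[-V_n(x_0):-W_n(x_0):x''_n:1:0]$ again lies on $\mathcal{F}_{x_0}$, and letting $n$ run over $1,\dots,4$ produces $4\times 2=8$ points, visibly rational over $\mathbb{Q}(x_0)$ with $X=x''_n$, $Y=1$, $Z=0$. They are pairwise distinct under the standing non-degeneracy hypotheses: for $n\neq m$ the $X$-coordinates $x''_n\neq x''_m$ differ since $\operatorname{Discr}_x(P)\neq 0$ by~(\ref{eqn:constraint0}), while $P_n$ coincides with $\imath'(P_n)$ only if $V_n(x_0)=W_n(x_0)=0$, which a short computation using~(\ref{eqn:relat}) rules out (it would force $x_0=x''_n$, where one finds $V_n(x''_n)=P'(x''_n)\neq 0$). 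No step here is deep; the one point that requires care is the combination of the "perfect square" phenomenon with the cancellation of the factor $(x_0-x''_n)$ --- it is this cancellation that makes $V_n,W_n$ honest polynomials in $x_0$, so that $P_n$ is defined for all $x_0$ (including $x_0=x''_n$) and not merely as a rational section --- together with the modest bookkeeping for distinctness.
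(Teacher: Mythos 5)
Your proof is correct and follows essentially the same route as the paper's: the paper's one-line argument is precisely that $q_1(x''_n,1)$ and $q_2(x''_n,1)$ become perfect squares in $x_0$ with product $q_0(x''_n,1,0)=B(x''_n,x_0)$, which is exactly what you establish. Your derivation of the perfect-square property from the identity~(\ref{eqn:relat}), including the cancellation of the factor $(x_0-x''_n)$ and the distinctness check via $V_n(x''_n)=P'(x''_n)\neq 0$, simply makes explicit the verification the paper leaves to the reader.
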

\begin{proof}
The proof follows by checking that for $X=x''_n$, $Y=1$ the quadrics $q_1, q_2$  in Equation~(\ref{eqn:g5_quadrics}) are perfect squares with roots $\pm V$ and $\pm W$ such $VW=q_0(X,Y,0)$.
\end{proof}
\par Using the Riemann-Roch theorem, it follows that the hypernet $Q(\alpha_0,\alpha_1, \alpha_2)=\alpha_0 Q_0 + \alpha_1 Q_1 + \alpha_2 Q_2$ is precisely the linear system of all quadrics in $\mathbb{P}^4$ containing $\mathcal{F}_{x_0}$ in Equation~(\ref{eqn:genus_5_curve}). Let $\Gamma$ be the locus of quadrics of rank less or equal to four, i.e.,
\beq 
 \Gamma = \Big\{ [\alpha_0 : \alpha_1 : \alpha_2 ] \in \mathbb{P}^2 \ \Big| \  \det{Q(\alpha_0,\alpha_1, \alpha_2)} =0 \Big \} \,,
\eeq
where the quadrics $Q_i$ for $0\le i \le 2$ are identified with the symmetric five-by-five matrices corresponding to the quadratic forms they represent. A simple computation shows that $\Gamma= \Gamma^+ \cup \Gamma^-$ is one-dimensional with
\beq
  \Gamma^+  = \mathrm{V}\Big(  \det{\big(\alpha_0 \, q_0 + \alpha_1 \, q_1 + \alpha_2 \, q_2\big)} \Big) \,,\quad
  \Gamma^-  =  \mathrm{V}\left( \det{\left(\begin{smallmatrix} \alpha_1 & \frac{1}{2} \alpha_0 \\  \frac{1}{2} \alpha_0 &  \alpha_2 \end{smallmatrix}\right)}\right)= \mathrm{V}\Big(\alpha_0^2 - 4 \alpha_1 \alpha_2\Big) \,.
\eeq
Thus, $\Gamma$ consists of a cubic curve $\Gamma^+$ and a conic $\Gamma^-$ without multiple components. The singular locus of $\Gamma$ is the zero-dimensional locus $\Gamma' \subset \Gamma$ of quadrics of rank less or equal to three, and the singularities of $\Gamma$ are all ordinary nodes. We also consider the scheme of special divisors on $\mathcal{F}_{x_0}$, given by
\beq
 W^1_4(\mathcal{F}_{x_0}) = \Big \{ \mathrm{D} \in \operatorname{Pic}^4(\mathcal{F}_{x_0}) \; \Big| \; h^0(\mathcal{F}_{x_0}, \mathcal{O}_{\mathcal{F}_{x_0}}(D)) \ge 2 \Big\} \,,
\eeq
which is equipped with a natural map $\phi: W^1_4(\mathcal{F}_{x_0}) \to \Gamma$ of degree two branched exactly over $\Gamma'$ \cite{MR2406115}*{Cor.~4.2}. 
\par One irreducible component of $W^1_4(\mathcal{F}_{x_0})$ is $\mathcal{C}'=\phi^{-1}(\Gamma^-)$, i.e., the  double cover of $\mathbb{P}^1\cong \Gamma^-$ branched on the six points of $\Gamma^+ \cap \Gamma^-$. One can show that the Jacobian of $\mathcal{C}'$ spans a two-dimensional abelian sub-variety in $\operatorname{Jac}(\mathcal{F}_{x_0})$~\cite{MR770932}. In fact, using the rational parametrization $[\alpha_0 : \alpha_1 : \alpha_2 ] = [2\xi : 1: \xi^2]$ for $\Gamma^-$ we obtain an explicit equation for $\mathcal{C}'$. The following was proved in \cite{MR422289}, \cite{MR770932}*{Ex.~VI.F}  and in \cite{MR2406115} over a general field of characteristic zero:
\begin{proposition}
\label{prop:genus_5_prym}
In the situation above, we have 
\beq
  \operatorname{Prym}(\mathcal{F}_{x_0}, \rho'_{x_0}) = \operatorname{Jac}(\mathcal{C}') \,,
\eeq  
where the smooth curve $\mathcal{C}'$ of genus two is given by
\beq
\label{eqn:normalization2}
\mathcal{C}': \quad \eta^2 = - \det{\Big( 2 \, \xi \, q_0 +  q_1 + \xi^2 \, q_2 \Big)} \,,
\eeq
and the conics $q_i$ for $0\le i \le 2$ are considered symmetric three-by-three matrices corresponding to the quadratic forms they represent.
\end{proposition}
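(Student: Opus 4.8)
\emph{Plan.} The idea is to combine the classical theory of canonical genus-five curves cut out by three quadrics with one explicit determinantal computation---the one opened up by the rational parametrization $[\alpha_0:\alpha_1:\alpha_2]=[2\xi:1:\xi^2]$ of the conic $\Gamma^-$. Note first that the dimensions match: an \'etale double cover $\rho'_{x_0}\colon\mathcal{F}_{x_0}\to\widetilde{\mathcal{D}}_{x_0}$ with $g(\mathcal{F}_{x_0})=5$ and $g(\widetilde{\mathcal{D}}_{x_0})=3$ has a two-dimensional Prym. The abstract assertion $\operatorname{Prym}(\mathcal{F}_{x_0},\rho'_{x_0})\cong\operatorname{Jac}\big(\phi^{-1}(\Gamma^-)\big)$ is exactly what is proved, over a field of characteristic zero, in \cite{MR422289}, \cite{MR770932}*{Ex.~VI.F} and \cite{MR2406115}; so the work here is (i) to verify that our $\mathcal{F}_{x_0}$ is of the type to which that machinery applies, and (ii) to put $\phi^{-1}(\Gamma^-)$ into the form~(\ref{eqn:normalization2}).

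For (i) I would cite the earlier parts of this section: under~(\ref{eqn:constraint0}),~(\ref{eqn:constraintsEC}) and $\Delta_\mathcal{Z}\!\mid_{x_0}\neq0$ in~(\ref{eqn:discriminant}), the base $\widetilde{\mathcal{D}}_{x_0}$ is a smooth (non-hyperelliptic) plane quartic by Proposition~\ref{prop:normalization}, the involution $\imath'$ is fixed-point free, so $\rho'_{x_0}$ is genuinely \'etale, and $\mathcal{F}_{x_0}$ is a smooth non-hyperelliptic curve of genus five cut out by the three quadrics~(\ref{eqn:genus_5_curve}) by \cite{MR2406115}, hence non-trigonal; thus Petri's description and the degree-two map $\phi\colon W^1_4(\mathcal{F}_{x_0})\to\Gamma$ branched over the rank-three locus $\Gamma'$ are available. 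For (ii) the computation runs as follows. In the coordinates $(V,W,X,Y,Z)$ the Gram matrices of $Q_0,Q_1,Q_2$ are block diagonal: a $2\times2$ block on $(V,W)$ built from the monomials $VW,V^2,W^2$, and a $3\times3$ block on $(X,Y,Z)$ equal to $q_0,q_1,q_2$. Hence
\[
 \det\big(\alpha_0Q_0+\alpha_1Q_1+\alpha_2Q_2\big)=\Big(\alpha_1\alpha_2-\tfrac{1}{4}\alpha_0^2\Big)\det\big(\alpha_0q_0+\alpha_1q_1+\alpha_2q_2\big),
\]
which is precisely the splitting $\Gamma=\Gamma^+\cup\Gamma^-$ recorded before the proposition, $\Gamma^-$ being a smooth conic. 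Along $\Gamma^-$ the $2\times2$ block has rank one, so the $5\times5$ form has generic rank four and drops to rank three exactly where $\det(\alpha_0q_0+\alpha_1q_1+\alpha_2q_2)$ vanishes, i.e.\ on the six points of $\Gamma^+\cap\Gamma^-$. Substituting $[\alpha_0:\alpha_1:\alpha_2]=[2\xi:1:\xi^2]$---which indeed lies on $\Gamma^-$---turns this rank-three condition into the vanishing of $\det(2\xi q_0+q_1+\xi^2q_2)$. Since $\phi$ is branched exactly over $\Gamma'$, its restriction $\phi^{-1}(\Gamma^-)\to\Gamma^-\cong\mathbb{P}^1_\xi$ is the double cover branched at those six points, so $\phi^{-1}(\Gamma^-)$ has affine model $\eta^2=c\,\det(2\xi q_0+q_1+\xi^2q_2)$ for some nonzero constant $c$; choosing $c=-1$ gives~(\ref{eqn:normalization2}). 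The curve $\mathcal{C}'$ is smooth precisely when these six points are distinct, which holds because $\Gamma^+$ and $\Gamma^-$ meet transversally---equivalently, the relevant singularities of $\Gamma$ are ordinary nodes---as already noted.

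I expect the main obstacle to be the genericity bookkeeping of step (i): one must be sure that, for generic $x_0$, the curve $\Gamma$ is honestly a reduced cubic and a smooth conic meeting in six distinct points, that $\Gamma'\cap\Gamma^-=\Gamma^+\cap\Gamma^-$, and that $\mathcal{F}_{x_0}$ is non-trigonal---any degeneration would change the genus of $\mathcal{C}'$ or split it. I would reduce these to $\Delta_\mathcal{Z}\neq0$ and~(\ref{eqn:constraintsEC}) using the resultant and discriminant identities already established in Section~\ref{ssec:AJ_K3}, along the lines of the proofs of Propositions~\ref{prop:normalization} and~\ref{prop:hyperelliptic}. A secondary, purely cosmetic point is the sign in~(\ref{eqn:normalization2}): it does not affect the isomorphism class of $\mathcal{C}'$ (replace $\eta$ by $\sqrt{-1}\,\eta$) and is fixed only so that $-\det(2\xi q_0+q_1+\xi^2q_2)$ lines up with the model $\eta^2=(\xi-\gamma)(\xi-\delta)S(\xi)$ of Proposition~\ref{prop:normalization}, a comparison I would carry out with Igusa--Clebsch invariants as in the proof of Proposition~\ref{prop:isog_genus2_curve}.
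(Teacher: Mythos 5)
Your proposal is correct and takes essentially the same route as the paper, which offers no written proof of this proposition but instead attributes it to the same sources you invoke (\cite{MR422289}, \cite{MR770932}*{Ex.~VI.F}, \cite{MR2406115}) for the identification $\operatorname{Prym}(\mathcal{F}_{x_0},\rho'_{x_0})\cong\operatorname{Jac}\big(\phi^{-1}(\Gamma^-)\big)$, and obtains the explicit equation exactly as you do, from the parametrization $[\alpha_0:\alpha_1:\alpha_2]=[2\xi:1:\xi^2]$ of $\Gamma^-$. Your block-diagonal determinant computation, the observation that $\Gamma'\cap\Gamma^-=\Gamma^+\cap\Gamma^-$, and the genericity bookkeeping simply make explicit what the paper delegates to the surrounding discussion and the cited references.
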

For the curves of genus five $\mathcal{F}_{x_0}$ over $\widetilde{\mathcal{D}}_{x_0}$ we have the following:
\begin{corollary}
\label{cor:genus_5_prym}
For $\Delta_\mathcal{Z} \not =0$ the curve $\mathcal{F}_{x_0}$ in Equation~(\ref{eqn:genus_5_curve}) is a smooth curve of genus five admitting the unramified double cover $\rho'_{x_0}: \mathcal{F}_{x_0} \to \widetilde{\mathcal{D}}_{x_0}$. The Prym variety is canonically isomorphic to the principally polarized abelian surface  $\operatorname{Prym}(\mathcal{F}_{x_0}, \rho'_{x_0}) = \operatorname{Jac}(\mathcal{C}')$ where $\mathcal{C}'$ is isomorphic to the smooth curve of genus two in Equation~(\ref{eqn:normalization}).
\end{corollary}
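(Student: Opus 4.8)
The plan is to read off the geometric assertions from the general theory of Section~\ref{sec:g5_curves} and then to carry out one explicit determinant computation identifying the curve in Equation~(\ref{eqn:normalization2}) with the $x_0$-independent curve in Equation~(\ref{eqn:normalization}).

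First I would settle smoothness and the étale property. Assuming $\Delta_\mathcal{Z}\neq 0$, Proposition~\ref{prop:normalization} gives that $\widetilde{\mathcal{D}}_{x_0}\cong\mathcal{D}_{x_0}$ is a smooth plane quartic carrying the bielliptic structure of Proposition~\ref{prop:pencil}, and the lemma guaranteeing that the involution~(\ref{eqn:genus_5_involution}) acts on $\mathcal{F}_{x_0}$ without fixed points when $\Delta_\mathcal{Z}\neq 0$ shows that $\rho'_{x_0}\colon\mathcal{F}_{x_0}\to\widetilde{\mathcal{D}}_{x_0}$ is an étale double cover; in particular $\mathcal{F}_{x_0}$ is smooth. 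It is connected, because by Remark~\ref{rem:hrouping_bielliptic_tangents} the cover is classified by the $2$-torsion line bundle $\mathscr{L}'=\mathcal{O}_{\widetilde{\mathcal{D}}_{x_0}}(\Theta_m-\Theta_n)$, which is nontrivial: the four bielliptic tangents $\mathrm{V}(X-x'_nY)$ of Equation~(\ref{eqn:g3curve_AG}) are pairwise distinct since the $x'_n$ are the roots of $4a_0c_4-b_2^2$, whose distinctness is equivalent to $\Delta_\mathcal{Z}\neq 0$. Riemann--Hurwitz then gives $g(\mathcal{F}_{x_0})=5$, and Proposition~\ref{prop:genus_5_prym} yields the canonical isomorphism $\operatorname{Prym}(\mathcal{F}_{x_0},\rho'_{x_0})=\operatorname{Jac}(\mathcal{C}')$ with $\mathcal{C}'$ the genus-two curve $\eta^2=-\det\!\big(2\xi q_0+q_1+\xi^2q_2\big)$ of Equation~(\ref{eqn:normalization2}).

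The heart of the argument is to show this curve is isomorphic to the one in Equation~(\ref{eqn:normalization}). Substituting the explicit conics of Equation~(\ref{eqn:g5_quadrics}), one finds that the pencil of $3\times 3$ symmetric matrices decouples as
\[
 2\xi q_0+q_1+\xi^2 q_2 \;=\; 4\xi\,P(x_0)\,Z^2 \;+\; (\xi+1)^2\,q_\nu(X,Y),\qquad \nu=\frac{\gamma+\xi\delta}{\xi+1},
\]
where $q_\nu(x,1)=\nu^2(x-x_0)^2-4\nu R(x,x_0)-4R_1(x,x_0)$ is the binary form of Equation~(\ref{eqn:correspondence}). Since the $Z^2$-block is decoupled, $\det\!\big(2\xi q_0+q_1+\xi^2 q_2\big)=4\xi\,P(x_0)\,(\xi+1)^4\det(q_\nu)=-\xi\,P(x_0)\,(\xi+1)^4\operatorname{Discr}_x\!\big(q_\nu(x,1)\big)$. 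The crucial input is the identity
\[
 \operatorname{Discr}_x\!\big(q_\lambda(x,1)\big)=16\,P(x_0)\,S(\lambda)\qquad\text{for all }\lambda,
\]
with $S$ the cubic of Equation~(\ref{eqn:EC}): the coefficient of $\lambda^3$ works out to $16\,R(x_0,x_0)=16\,P(x_0)$, and the cubic vanishes along the zeros of $S$ because, by the Abel--Jacobi identity~(\ref{eqn:relat}), the correspondence~(\ref{eqn:correspondence}) is the graph of the isomorphism $\mathcal{Q}\cong\mathcal{E}$ composed with the degree-two map $\mathcal{E}\to\mathbb{P}^1_\xi$, whose ramification over $\mathbb{P}^1_\xi$ is exactly the zero set of $S$ together with the point at infinity. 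Combining, one obtains
\[
 -\det\!\big(2\xi q_0+q_1+\xi^2 q_2\big)=16\,P(x_0)^2\,\xi\,(\xi+1)\Big[(\gamma+\xi\delta)^3+f(\gamma+\xi\delta)(\xi+1)^2+g(\xi+1)^3\Big].
\]

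Finally I would identify this quintic with $(\xi-\gamma)(\xi-\delta)\,S(\xi)$ up to isomorphism. The Möbius transformation $\xi\mapsto\nu=(\gamma+\xi\delta)/(\xi+1)$ sends $\{0,\infty,-1\}$ to $\{\gamma,\delta,\infty\}$ and the three roots of the bracketed cubic to the three roots of $S$; hence it matches the six branch points of the displayed quintic with those of Equation~(\ref{eqn:normalization}), the nonzero scalar $16P(x_0)^2$ being absorbed into a rescaling of $\eta$. Thus $\mathcal{C}'$ of Equation~(\ref{eqn:normalization2}) is isomorphic to the curve of Equation~(\ref{eqn:normalization}), proving the corollary. (Alternatively this last step can be done by checking that the two quintics give the same point in the weighted projective space $\mathbb{P}(2,4,6,10)$ of Igusa--Clebsch invariants, exactly as in the proof of Proposition~\ref{prop:normalization}.) The one genuinely non-formal step is the discriminant identity $\operatorname{Discr}_x(q_\lambda)=16P(x_0)S(\lambda)$, where the Hermite/Abel--Jacobi structure of the construction is really used; everything else is bookkeeping, keeping in mind that $\Delta_\mathcal{Z}\neq 0$ forces $P(x_0)\neq 0$ and $S(\delta)\neq 0$, so no branch point of the quintic escapes to infinity unexpectedly.
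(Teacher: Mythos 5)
Your proof is correct, and the decisive step is handled by a genuinely different method than the one the paper uses. The framing is the same in both cases: smoothness, the \'etale property, the genus count, and the identification $\operatorname{Prym}(\mathcal{F}_{x_0},\rho'_{x_0})=\operatorname{Jac}(\mathcal{C}')$ with $\mathcal{C}'$ as in Equation~(\ref{eqn:normalization2}) are all inherited from the fixed-point lemma, Proposition~\ref{prop:normalization} and Proposition~\ref{prop:genus_5_prym}, exactly as you say. Where you diverge is in identifying the curve of Equation~(\ref{eqn:normalization2}) with that of Equation~(\ref{eqn:normalization}): the paper simply computes the Igusa--Clebsch invariants of both quintics and checks that they agree in $\mathbb{P}(2,4,6,10)$ up to the weight factor $r=32(\gamma-\delta)P(x_0)^2$, whereas you factor the determinant structurally. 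Your decomposition $2\xi q_0+q_1+\xi^2 q_2=4\xi P(x_0)Z^2+(\xi+1)^2q_\nu$ with $\nu=(\gamma+\xi\delta)/(\xi+1)$ checks out against Equation~(\ref{eqn:g5_quadrics}) (the cross terms assemble to $(1+\xi)(\gamma+\xi\delta)$ and $(1+\xi)^2$), and the discriminant identity $\operatorname{Discr}_x(q_\lambda)=16P(x_0)S(\lambda)$ is correct: the $\lambda^4$ term cancels, the $\lambda^3$ coefficient is $16P(x_0)$, and the vanishing at the three (distinct, by Equation~(\ref{eqn:constraint0})) roots of $S$ follows from your ramification argument for the degree-two map $\mathcal{Q}\to\mathbb{P}^1_\xi$ built from Equations~(\ref{eqn:relat}) and~(\ref{eqn:correspondence}); together these pin down the cubic. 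The payoff of your route is that the isomorphism is exhibited by an explicit M\"obius transformation matching the six branch points individually (hence also tracking the level-two structure), and it explains \emph{why} the determinant curve is independent of $x_0$ up to isomorphism, rather than certifying it by an invariant computation; the cost is the extra verification of the discriminant identity. The paper's route is shorter to state but opaque, and it needs smoothness of both curves so that equality of invariants implies isomorphism --- a hypothesis your parenthetical correctly secures from $\Delta_{\mathcal{Z}}\neq 0$ via $P(x_0)\neq0$ and $\mu^2-\nu\kappa=S(\gamma)S(\delta)\neq0$. One minor caveat: your connectedness argument via non-triviality of $\mathscr{L}'=\mathcal{O}(\Theta_m-\Theta_n)$ uses that distinct effective degree-two divisors on a non-hyperelliptic genus-three curve are not linearly equivalent, so it should be stated for the non-hyperelliptic members (the generic case); this does not affect the corollary as used.
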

\begin{proof}
We compute the Igusa-Clebsch invariants, using the same normalization as in \cites{MR3712162,MR3731039}.  Denoting the Igusa-Clebsch invariants of the curves of genus two in Equation~(\ref{eqn:normalization}) and~(\ref{eqn:normalization2}) by $[ I_2 : I_4 : I_6 : I_{10} ] \in \mathbb{P}(2,4,6,10)$ and $[ I'_2 : I'_4 : I'_6 : I'_{10} ]$, respectively, one checks that
\beq
[ I_2 : I_4 : I_6 : I_{10} ] = [ r^2 I'_2 \ : \ r^4I'_4 \ : \ r^6I'_6 \ : \ r^{10}I'_{10} ] = [ I'_2 : I'_4 : I'_6 : I'_{10} ] \,,
\eeq
with $r=32 \, (\gamma-\delta) \, P(x_0)^2$. Thus, the curves are isomorphic.
\end{proof}
\par  Because of Lemma~\ref{lem:sections_g5} we can embed $\mathcal{F}_{x_0}$ into $\operatorname{Jac}(\mathcal{F}_{x_0})$. We then combine this map with the projection map $\mathrm{id}_* -\jmath_*: \operatorname{Jac}(\mathcal{F}_{x_0}) \to \operatorname{Prym}(\mathcal{F}_{x_0}, \rho'_{x_0})$, which is called the \emph{Abel-Prym map}. We have the following:
\begin{lemma}
\label{lem:APembedding}
Each smooth curve $\mathcal{F}_{x_0}$ embeds into $\operatorname{Jac}(\mathcal{C}')$ via the Abel-Prym map.
\end{lemma}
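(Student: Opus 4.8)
The plan is to apply the standard criterion: for an unramified double cover $\rho'_{x_0}\colon \mathcal{F}_{x_0}\to \widetilde{\mathcal{D}}_{x_0}$ with covering involution $\imath'$, the Abel–Prym map $\mathcal{F}_{x_0}\to \operatorname{Prym}(\mathcal{F}_{x_0},\rho'_{x_0})$ fails to be an embedding precisely when the base curve $\widetilde{\mathcal{D}}_{x_0}$ is hyperelliptic and the double cover is of a very specific type (see Mumford's analysis of Prym varieties and the criterion reproduced in \cite{MR770932}). Concretely, the Abel–Prym map $\varphi\colon\mathcal{F}_{x_0}\to\operatorname{Jac}(\mathcal{C}')$, $P\mapsto [P-\imath'(P)]$, is injective away from the locus where $\varphi(P)=\varphi(P')$ for $P'\neq P,\imath'(P)$; such a coincidence forces $P+\imath'(P')\sim P'+\imath'(P)$ on $\mathcal{F}_{x_0}$, hence the existence of a $g^1_2$ on $\widetilde{\mathcal{D}}_{x_0}$ compatible with the covering data. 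So the first step is to invoke this criterion and reduce the statement to showing that the relevant special configuration does not occur for our family.

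Next I would rule out the bad case using the geometry already established. By Proposition~\ref{prop:hyperelliptic}, the curve $\widetilde{\mathcal{D}}_{x_0}$ is hyperelliptic only for the six values of $x_0$ that are roots of $[P,Q]_{x_0}=0$, and by the same proposition these lie disjoint from the discriminant locus $\Delta_\mathcal{Z}=0$ when the constraints~(\ref{eqn:constraintsEC}) hold. For the generic (non-hyperelliptic) member, $\widetilde{\mathcal{D}}_{x_0}$ carries no $g^1_2$ at all, so the obstruction is vacuous and $\varphi$ is automatically an embedding; this is the main content. For the six hyperelliptic members one must check separately that the unramified double cover $\rho'_{x_0}$ is \emph{not} the pullback of the hyperelliptic $g^1_2$ under the relevant construction — equivalently, that the line bundle $\mathscr{L}'=\mathcal{O}_{\mathcal{D}}(\mathrm{D})$ of Remark~\ref{rem:hrouping_bielliptic_tangents} is not the specific $2$-torsion class whose étale double cover degenerates the Abel–Prym map. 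This can be verified by noting that $\mathrm{D}=\pm(\Theta_m-\Theta_n)$ is supported on the bielliptic tangent points $p'_{n,\pm}$, which are \emph{not} Weierstrass points of the hyperelliptic structure (the bielliptic and hyperelliptic involutions are distinct commuting involutions), so $\mathscr{L}'$ is of the "even" type for which the Prym map is known to remain an embedding.

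An alternative, more computational route — which I would use as a fallback or a cross-check — is to use the explicit model~(\ref{eqn:genus_5_curve}): since $\operatorname{Prym}(\mathcal{F}_{x_0},\rho'_{x_0})=\operatorname{Jac}(\mathcal{C}')$ is a principally polarized abelian surface (Corollary~\ref{cor:genus_5_prym}), the Abel–Prym image of $\mathcal{F}_{x_0}$ is a curve whose arithmetic genus in $\operatorname{Jac}(\mathcal{C}')$ can be computed from the self-intersection via adjunction; comparing with the geometric genus $5$ of $\mathcal{F}_{x_0}$ forces the map to be birational onto its image, and then smoothness of $\mathcal{F}_{x_0}$ (from $\Delta_\mathcal{Z}\neq 0$) together with the fact that the image curve generates $\operatorname{Jac}(\mathcal{C}')$ and meets each translate of the theta divisor appropriately upgrades "birational" to "embedding."

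The hard part will be the six hyperelliptic fibers: there one genuinely has to identify which $2$-torsion class $\mathscr{L}'$ corresponds to and confirm it avoids the single forbidden class. This is where the careful bookkeeping of Section~\ref{sec:g5_curves} — that $\mathrm{D}$ arises as a difference of theta divisors cut out by bielliptic \emph{bitangents}, not by hyperelliptic ramification — does the work, but writing it out cleanly requires tracking the $64_{28}$-symmetry data relative to the commuting pair of involutions.
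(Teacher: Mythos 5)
Your route is genuinely different from the paper's, which makes no use of the hyperelliptic/non-hyperelliptic dichotomy at all: the paper embeds $\mathcal{F}_{x_0}$ into $\operatorname{Jac}(\mathcal{F}_{x_0})$, invokes Masiewicki \cite{MR422289}*{Prop.~5.3} to see that this embedding misses the factor $\operatorname{Jac}(\widetilde{\mathcal{D}}_{x_0})$ (so the projection to the Prym remains an embedding), and then cites Verra \cite{MR875339}, whose classification applies directly to genus-five curves presented as intersections of three quadrics as in Equation~(\ref{eqn:genus_5_curve}), to conclude that these are Abel--Prym embeddings up to translation by a point of order two. That argument is uniform in $x_0$ and requires no case distinction; your argument does, and the case you defer is precisely where the difficulty sits.

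Concretely, there is a genuine gap at the six smooth fibers over the roots of $[P,Q]_{x_0}=0$, where $\widetilde{\mathcal{D}}_{x_0}$ is hyperelliptic (Proposition~\ref{prop:hyperelliptic}); you acknowledge these but do not handle them, and the sketch offered does not close them. Two specific problems. First, the criterion is misstated: if $\varphi(P)=\varphi(P')$ forces $P+\imath'(P')\sim P'+\imath'(P)$, the resulting $g^1_2$ lives on the \emph{cover} $\mathcal{F}_{x_0}$, not on $\widetilde{\mathcal{D}}_{x_0}$; since $\mathcal{F}_{x_0}$ is a canonically embedded intersection of three quadrics in $\mathbb{P}^4$, hence non-hyperelliptic, injectivity is automatic for \emph{every} smooth fiber --- so injectivity is not where the danger lies. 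The real obstruction on the hyperelliptic fibers is that the \emph{differential} of the Abel--Prym map vanishes exactly over the base points of the Prym-canonical system $|\mathcal{K}_{\widetilde{\mathcal{D}}_{x_0}}\otimes\mathscr{L}'|$, and for a hyperelliptic genus-three base this system does acquire base points when $\mathscr{L}'$ is a difference of Weierstrass points; your assertion that the bitangency points are not Weierstrass points, hence $\mathscr{L}'$ is of the ``even'' type, is neither a verification nor the relevant dichotomy. Second, the adjunction fallback is circular: the pushforward cycle $\varphi_*[\mathcal{F}_{x_0}]$ has class twice the theta class, so the class of the image is that divided by $\deg\varphi$; one cannot deduce $\deg\varphi=1$ from ``self-intersection eight'' without already knowing birationality, and a degree-two map onto a theta divisor is not excluded by Riemann--Hurwitz. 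To complete your approach you would have to identify the two-torsion class $\mathscr{L}'=\mathcal{O}_{\mathcal{D}}(\Theta_m-\Theta_n)$ on the six hyperelliptic fibers relative to the hyperelliptic Weierstrass points and verify base-point-freeness of $|\mathcal{K}_{\widetilde{\mathcal{D}}_{x_0}}\otimes\mathscr{L}'|$ there; the paper sidesteps all of this by leaning on Verra's result for the explicit quadric model.
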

\begin{proof}
The curve $\mathcal{F}_{x_0}$ is embeds into $\operatorname{Jac}(\mathcal{F}_{x_0})$ which decomposes into $\operatorname{Prym}{(\mathcal{F}_{x_0}, \rho'_{x_0})}$ and $\operatorname{Jac}(\widetilde{\mathcal{D}}_{x_0})$. It was proved in \cite{MR422289}*{Prop.~5.3} that for an unramified double cover $\rho'_{x_0}: \mathcal{F}_{x_0} \to \widetilde{\mathcal{D}}_{x_0}$ this embedding misses $\operatorname{Jac}(\widetilde{\mathcal{D}}_{x_0})$. Verra proves that the curves of genus five $\mathcal{F}_{x_0} \subset \operatorname{Jac}(\mathcal{C}')$ of the form in Equation~(\ref{eqn:genus_5_curve}), up to translation by a 2-torsion point, are Abel-Prym embeddings \cite{MR875339}.
\end{proof}
\par In Equation~(\ref{eqn:genus_5_curve})  $q_1, q_2$ do not depend on the variable $Z$; thus, $\Gamma^+ = \mathrm{L} \cup  \mathrm{C}$ decomposes into a line component $\mathrm{L}$ and another irreducible conic $\mathrm{C}$. In general, there is a bijection between lines in $\Gamma$ and bielliptic structures on $\mathcal{F}$. In fact, the following was proved in \cite{MR770932}*{Ex.~VI.F}:
\begin{lemma}
If $\mathrm{L}\subset \Gamma$ is a line component and $\mathcal{E}' \to \mathrm{L}$ the double cover branched on the four points of $\mathrm{L} \cap (\Gamma - \mathrm{L})$, then $\mathcal{F}$ is the double cover of $\mathcal{E}'$. 
\end{lemma}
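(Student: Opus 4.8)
The plan is to realise the bielliptic structure on $\mathcal{F}_{x_0}$ concretely, by projecting away from the point that serves as the common vertex of all the quadrics parametrised by $\mathrm{L}$, and then to match the resulting genus-one quotient with $\mathcal{E}'$. First I would pin down $\mathrm{L}$ and that common vertex. Since the conics $q_1,q_2$ in Equation~(\ref{eqn:g5_quadrics}) involve only $X,Y$ while $q_0=2P(x_0)Z^2+\tilde q_0(X,Y)$, the $3\times 3$ determinant $\det\big(\alpha_0 q_0+\alpha_1 q_1+\alpha_2 q_2\big)$ splits off the factor $2P(x_0)\,\alpha_0$, so the line component is $\mathrm{L}=\mathrm{V}(\alpha_0)$. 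For every $[\alpha_1:\alpha_2]\in\mathrm{L}$ the quadric $\alpha_1 Q_1+\alpha_2 Q_2=\alpha_1\big(q_1(X,Y)-V^2\big)+\alpha_2\big(q_2(X,Y)-W^2\big)$ is independent of $Z$, hence is a cone with vertex the fixed point $\mathrm{O}'=[0:0:0:0:1]$; moreover $\mathrm{O}'\notin\mathcal{F}_{x_0}$ because $Q_0(\mathrm{O}')=q_0(0,0,1)=2P(x_0)\neq 0$.

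Next I would study the linear projection $\pi_{\mathrm{O}'}\colon\mathbb{P}^4\dashrightarrow\mathbb{P}^3=\mathbb{P}(V,W,X,Y)$, which is a morphism on $\mathcal{F}_{x_0}$. The involution $\tau\colon[V:W:X:Y:Z]\mapsto[V:W:X:Y:-Z]$ preserves $\mathcal{F}_{x_0}$ (each $Q_i$ is even in $Z$), it is a lift to $\mathcal{F}_{x_0}$ of the bielliptic involution $\jmath\colon[X:Y:Z]\mapsto[X:Y:-Z]$ of $\widetilde{\mathcal{D}}_{x_0}$, and its fixed locus on $\mathcal{F}_{x_0}$ is the hyperplane section $\{Z=0\}$, i.e.\ $8$ points; by Riemann--Hurwitz $\big(2\cdot 5-2=2(2g-2)+8\big)$ the quotient $\mathcal{F}_{x_0}/\langle\tau\rangle$ is a smooth curve of genus one. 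Since $\pi_{\mathrm{O}'}$ is $\tau$-invariant it factors through this quotient, and deleting $Z$ from $Q_1,Q_2$ shows $\pi_{\mathrm{O}'}$ maps $\mathcal{F}_{x_0}$ onto the complete intersection of the two quadric surfaces $\bar{S}\colon\ V^2=q_1(X,Y),\ W^2=q_2(X,Y)$ in $\mathbb{P}^3$, a curve of arithmetic genus one birational to $\mathcal{F}_{x_0}/\langle\tau\rangle$ and smooth under the standing genericity hypotheses. A general fibre of $\pi_{\mathrm{O}'}|_{\mathcal{F}_{x_0}}$ is the intersection of a line of a cone with $\mathcal{F}_{x_0}$: on such a line $Q_1,Q_2$ vanish identically while $Q_0$ restricts to a binary quadratic in the fibre parameter, so the fibre consists of two points. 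Hence $\pi_{\mathrm{O}'}\colon\mathcal{F}_{x_0}\to\bar S$ is a double cover and $\mathcal{F}_{x_0}$ is bielliptic with quotient $\bar S$.

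It then remains to identify $\bar S$ with $\mathcal{E}'$. By the construction recalled before Proposition~\ref{prop:genus_5_prym} (following \cite{MR2406115}), $\mathcal{E}'=\phi^{-1}(\mathrm{L})$ is the double cover of $\mathrm{L}\cong\mathbb{P}^1$ branched over the four points $\mathrm{L}\cap(\Gamma-\mathrm{L})$ (here $\deg\mathrm{L}\cdot\deg(\Gamma-\mathrm{L})=1\cdot 4$), so it is a smooth genus-one curve. Under $\pi_{\mathrm{O}'}$ a ruling of $2$-planes of the cone $\alpha_1 Q_1+\alpha_2 Q_2$ descends to a ruling of lines on the quadric surface $\alpha_1\big(q_1-V^2\big)+\alpha_2\big(q_2-W^2\big)=0$ of the pencil in $\mathbb{P}^3$ whose base locus is $\bar S$; since $\bar S$ has bidegree $(2,2)$ on each smooth member, a ruling cuts out a $g^1_2$ on $\bar S$, and the two of them pull back along $\pi_{\mathrm{O}'}$ to the two $g^1_4$'s lying over $[\alpha_1:\alpha_2]$ in $W^1_4(\mathcal{F}_{x_0})$. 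Thus $\mathcal{E}'$ is exactly the variety of rulings of the pencil of quadric surfaces through $\bar S$, and by the classical theory of a pencil of quadrics in $\mathbb{P}^3$ this variety is canonically isomorphic to $\bar S$ itself: both are the elliptic double cover of $\mathrm{L}$ branched at the four parameters for which the corresponding quadric surface degenerates. Combining this with the previous paragraph, $\mathcal{F}_{x_0}\to\bar S\cong\mathcal{E}'$ is the asserted double cover.

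I expect the main obstacle to be precisely this last identification $\bar S\cong\phi^{-1}(\mathrm{L})$: the projection and the degree counts are routine once the common vertex $\mathrm{O}'$ is recognised, but matching the abstractly defined variety of $g^1_4$'s with the explicit $(2,2)$-curve $\bar S$ requires either invoking the classical correspondence between a smooth quartic elliptic curve in $\mathbb{P}^3$ and the pencil of quadrics through it, or---for a self-contained argument---computing the four branch points of each of the two double covers of $\mathrm{L}$ and checking that they agree (equivalently, comparing $j$-invariants, exactly as in the proofs of Proposition~\ref{prop:isog_genus2_curve} and Corollary~\ref{cor:genus_5_prym}). A secondary point is to verify that, under Equations~(\ref{eqn:constraintsEC}) together with $\Delta_\mathcal{Z}\neq 0$, the curve $\Gamma$ is a nodal quintic meeting $\mathrm{L}$ transversally in four distinct points away from $\operatorname{Sing}(\Gamma-\mathrm{L})$, so that $\mathcal{E}'$ is genuinely smooth of genus one.
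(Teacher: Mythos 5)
The paper does not actually prove this lemma: it is quoted verbatim from \cite{MR770932}*{Ex.~VI.F} and used as a black box, with the explicit computation of $\mathcal{E}'_{x_0}$ deferred to Proposition~\ref{cor:genus_5_bielliptic}. Your proposal supplies the argument behind that citation, and it is correct. The key observations all check out: since $q_1,q_2$ omit $Z$ and $q_0$ contains $Z$ only through $2P(x_0)Z^2$, the $Z$-row of the $3\times3$ matrix is $(0,0,2P(x_0)\alpha_0)$, so $\mathrm{L}=\mathrm{V}(\alpha_0)$ splits off; every quadric $\alpha_1Q_1+\alpha_2Q_2$ is a cone with vertex $[0:0:0:0:1]$, which avoids $\mathcal{F}_{x_0}$ because $P(x_0)\neq 0$ is a standing assumption in Section~\ref{sec:g5_curves}; the involution $Z\mapsto -Z$ fixes exactly the hyperplane section $\mathcal{F}_{x_0}\cap\{Z=0\}$ of degree $2g-2=8$ (the eight points of Lemma~\ref{lem:sections_g5}), so Riemann--Hurwitz gives a genus-one quotient; and the projection from the vertex is $2:1$ onto the $(2,2)$-curve $\bar S\colon V^2=q_1,\ W^2=q_2$ because $Q_0$ restricts to a binary quadratic on each line of the cone. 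The four points $\mathrm{L}\cap(\Gamma-\mathrm{L})$ are exactly $\alpha_1\alpha_2\det(\alpha_1q_1+\alpha_2q_2)_{XY}=0$, i.e.\ the degenerate members of the pencil of quadric surfaces through $\bar S$, so the final step reduces to the classical fact that a smooth elliptic normal quartic in $\mathbb{P}^3$ is isomorphic to the double cover of its pencil line branched at the four singular quadrics (equivalently, to the variety of rulings). You invoke this without proof but flag it honestly and indicate the fallback of comparing branch points or $j$-invariants -- which is precisely the style of verification the paper uses elsewhere (Proposition~\ref{prop:isog_genus2_curve}, Corollary~\ref{cor:genus_5_prym}), so this is an acceptable level of rigor. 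The net effect is that your argument is self-contained and more explicit than the paper's appeal to \cite{MR770932}, at the modest cost of one genericity check (smoothness of $\bar S$, i.e.\ simple roots of the discriminant quartic, which follows from $\Delta_{\mathcal Z}\neq0$ and $P(x_0)\neq0$) that you correctly identify as necessary.
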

We have the following:
\begin{proposition}
\label{cor:genus_5_bielliptic}
Let  $\mathcal{F}_{x_0}$ be a smooth curve of genus five and $\mathrm{L}$, $\mathrm{C} \subset \Gamma^+$ as above. The double cover of $\mathbb{P}^1$ branched on the four point of $\mathrm{L} \cap (\mathrm{C} \cup \Gamma^-)$ is an irreducible component of $W^1_4(\mathcal{F}_{x_0})$, and $\mathcal{E}'_{x_0}=\phi^{-1}(\mathrm{L})$ is the elliptic curve
\beq
\label{eqn:JacFib_dual2}
 \mathcal{E}'_{x_0}: \quad  y^2 = S(\gamma) \, P(x_0) \, x^3 +\Big( 2 \, \mu \, P(x_0) + (\gamma-\delta)^2 \, Q(x_0)  \Big) \, x^2 +  S(\delta) \, P(x_0) \, x \,,
\eeq
where $S(\xi)$, $P(x)$, $Q(x)$, $\mu$ are defined in Section~\ref{ssec:AJM} and Equation~(\ref{eqn:moduli}). Moreover, each curve $\mathcal{F}_{x_0}$ admits a double cover $\pi'_{x_0}: \mathcal{F}_{x_0} \to \mathcal{E}'_{x_0}$.
\end{proposition}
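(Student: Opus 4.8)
The plan is to read off $\mathcal{E}'_{x_0}=\phi^{-1}(\mathrm{L})$ directly from the hypernet of quadrics through $\mathcal{F}_{x_0}$ by exploiting the block structure of the defining symmetric matrices, and then to obtain a Weierstrass model by an explicit elimination resting on the bi-quadratic identity~(\ref{eqn:relat}).

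First I would make the splitting $\Gamma^{+}=\mathrm{L}\cup\mathrm{C}$ concrete at the level of matrices. Since $q_1,q_2$ involve only $X,Y$ whereas $q_0(X,Y,Z)=2a_0Z^2+b_2(X,Y)$ with $a_0=P(x_0)\neq0$, the symmetric matrix of $\alpha_0q_0+\alpha_1q_1+\alpha_2q_2$ in $(X,Y,Z)$ is block diagonal, with a $2\times2$ block $M(\alpha)$ in $(X,Y)$ and a $1\times1$ block $2a_0\alpha_0$; hence $\mathrm{L}=\mathrm{V}(\alpha_0)$ and $\mathrm{C}=\mathrm{V}(\det M(\alpha))$. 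Likewise the $5\times5$ matrix of $\alpha_0Q_0+\alpha_1Q_1+\alpha_2Q_2$ is block diagonal, with a $(V,W)$-block of determinant $\frac{1}{4}(\alpha_0^2-4\alpha_1\alpha_2)$, the block $M(\alpha)$, and the block $2a_0\alpha_0$. Along $\mathrm{L}$ the last block vanishes identically, the $(V,W)$-block has rank $2$ and $M(\alpha)$ has rank $2$ generically, so the corresponding quadric has rank $4$; its rank drops to $3$ precisely at $\mathrm{L}\cap\mathrm{C}$ (where $\operatorname{rank}M=1$) and at $\mathrm{L}\cap\Gamma^-$ (where the $(V,W)$-block degenerates). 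Thus $\mathrm{L}\cap\Gamma'=\mathrm{L}\cap(\mathrm{C}\cup\Gamma^-)$ is a set of four points, generically distinct under~(\ref{eqn:constraint0}) and~(\ref{eqn:constraintsEC}). Because $\phi\colon W^1_4(\mathcal{F}_{x_0})\to\Gamma$ has degree two branched over $\Gamma'$, and $\mathrm{L}$ is an irreducible component of $\Gamma$, the preimage $\phi^{-1}(\mathrm{L})$ is the connected double cover of $\mathrm{L}\cong\mathbb{P}^1$ branched at those four points, an irreducible genus-one component of $W^1_4(\mathcal{F}_{x_0})$. This would establish the first assertion, and the last one --- that $\mathcal{F}_{x_0}$ is a double cover of $\mathcal{E}'_{x_0}=\phi^{-1}(\mathrm{L})$ --- follows at once from the preceding Lemma (from \cite{MR770932}*{Ex.~VI.F}) applied to the line component $\mathrm{L}$.

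To compute the Weierstrass model I would parametrize $\mathrm{L}$ by $[\alpha_0:\alpha_1:\alpha_2]=[0:x:1]$, so that the two points of $\mathrm{L}\cap\Gamma^-$ lie at $x=0$ and $x=\infty$ and the two points of $\mathrm{L}\cap\mathrm{C}$ are the zeros of $\det M(0,x,1)$, where $M(0,x,1)$ is the symmetric matrix of the binary form $x\,q_1(X,Y)+q_2(X,Y)$; so $\det M(0,x,1)=-\frac{1}{4}\operatorname{Discr}_X\!\big(x\,q_1(X,1)+q_2(X,1)\big)$ and $\mathcal{E}'_{x_0}$ is the double cover $y^2=x\cdot\det M(0,x,1)$ of $\mathbb{P}^1_x$. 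Substituting $q_i(X,1)$ from~(\ref{eqn:g5_quadrics}) and clearing $(X-x_0)^2$ by means of~(\ref{eqn:relat}), the quadratic $x\,q_1(X,1)+q_2(X,1)$ becomes $(X-x_0)^{-2}\big(x(\gamma(X-x_0)^2-2R)^2+(\delta(X-x_0)^2-2R)^2-4(x+1)P(X)P(x_0)\big)$ and takes the value $-4\big((\gamma x+\delta)P(x_0)+(x+1)Q(x_0)\big)$ at $X=x_0$. Expanding its discriminant in $X$ and simplifying with $R(X,X)=P(X)$, $R_1(X,X)=Q(X)$, Hermite's formulas~(\ref{eqn:hermite}), the definitions~(\ref{eqn:moduli}) of $\kappa,\mu,\nu$, and the identity $\mu^2-\kappa\nu=S(\gamma)S(\delta)$, I expect to find
\[
 \det M(0,x,1)\ \propto\ S(\gamma)\,P(x_0)\,x^2+\big(2\mu\,P(x_0)+(\gamma-\delta)^2Q(x_0)\big)\,x+S(\delta)\,P(x_0),
\]
so that, after absorbing the proportionality constant into $(x,y)$, the curve $\mathcal{E}'_{x_0}$ acquires the form~(\ref{eqn:JacFib_dual2}). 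As an internal consistency check I would verify that this model is isomorphic, via $x\mapsto x/(2S(\gamma)P(x_0))$, to the fiber over $x_0$ of the Jacobian elliptic surface $\mathcal{Z}'$ of Remark~\ref{rem:Zfibration_dual}, in agreement with $\mathcal{E}'_{x_0}$ being $2$-isogenous to $\operatorname{Jac}(\mathcal{Q}_{x_0})$.

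The step I expect to be the main obstacle is precisely this discriminant computation: one must unwind $R$ and $R_1$ --- equivalently, use the identity~(\ref{eqn:relat}) --- far enough to express the coefficients of $\operatorname{Discr}_X(x\,q_1+q_2)$ in closed form, and then recognize the resulting symmetric functions of $x_0$ as $P(x_0)$, $Q(x_0)$ and $2\mu P(x_0)+(\gamma-\delta)^2Q(x_0)$, and the constants in $\gamma$ and $\delta$ as $S(\gamma)$ and $S(\delta)$. By contrast, the block decomposition of the defining matrices, the rank count along $\mathrm{L}$, the use of the structure of the degree-two map $\phi$, and the invocation of the preceding Lemma are all formal once the parametrization of $\mathrm{L}$ and the two branch loci $\mathrm{L}\cap\mathrm{C}$ and $\mathrm{L}\cap\Gamma^-$ have been pinned down.
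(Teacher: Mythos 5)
Your proposal is correct and follows essentially the same route as the paper: decompose $\Gamma^+=\mathrm{L}\cup\mathrm{C}$ via the block structure of the quadrics, parametrize $\mathrm{L}=\mathrm{V}(\alpha_0)$ by $[0:x:z]$, realize $\mathcal{E}'_{x_0}=\phi^{-1}(\mathrm{L})$ as the double cover branched on $\mathrm{L}\cap(\mathrm{C}\cup\Gamma^-)$ by restricting the product of the defining equations of $\Gamma^-$ and $\mathrm{C}$ to $\mathrm{L}$ (your $x\cdot\det M(0,x,1)$ is exactly this up to a constant), and deduce the double cover $\pi'_{x_0}\colon\mathcal{F}_{x_0}\to\mathcal{E}'_{x_0}$ from the preceding lemma. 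Your explicit rank count along $\mathrm{L}$ and the use of the identity~(\ref{eqn:relat}) to organize the discriminant computation merely make explicit what the paper compresses into ``one checks.''
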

\begin{proof}
One checks that $\Gamma^+ = \mathrm{L} \cup  \mathrm{C}$ decomposes into the line component $\mathrm{L}$ and another irreducible conic $\mathrm{C}$ with $\mathrm{L} = \mathrm{V}(\alpha_0)$ and $\mathrm{C}=\mathrm{V}(q(\alpha_0,\alpha_1,\alpha_2))$ for a certain conic $q$. The double cover of $\mathbb{P}^1$ branched on the four points of $\Gamma^- \cap \mathrm{C}$ is then given by
\beq
\begin{split}
&  \left. \Big(\big(\alpha_0^2-4\alpha_1\alpha_2\Big) \; q(\alpha_0,\alpha_1,\alpha_2)\Big)\right|_{[\alpha_0:\alpha_1:\alpha_2]=[0:x:z]} \\
= & \; S(\gamma) \, P(x_0) \, x^3 +\Big( 2 \, \mu \, P(x_0) + (\gamma-\delta)^2 \, Q(x_0)  \Big) \, x^2z +  S(\delta) \, P(x_0) \, x z^2 \,,
\end{split} 
\eeq
which agrees with Equation~(\ref{eqn:JacFib_dual2}) in the affine coordinate chart $z=1$.
\end{proof}
We also introduce $\mathcal{Z}'= \coprod_{x_0} \mathcal{E}'_{x_0}$, i.e., the total space of the elliptic fibration obtained by varying the parameter $x_0$ in Equation~(\ref{eqn:JacFib_dual2}). We have the following:
\begin{corollary}
The total space of the elliptic fibration $\mathcal{Z}'= \coprod_{x_0} \mathcal{E}'_{x_0}$ is birationally equivalent to the Jacobian elliptic fibration in Equation~(\ref{eqn:JacFib_dual}).
\end{corollary}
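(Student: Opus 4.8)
The plan is to reduce the statement to an explicit comparison of Weierstrass models over the base $\mathbb{P}_{x_0}$. First I would recall from Section~\ref{ssec:AJ_K3} the two elementary identities $\nu=(\gamma-\delta)^2/2$ (from Equation~(\ref{eqn:moduli})) and $\mu^2-\kappa\nu = \eta_{q_\gamma}^2\eta_{q_\delta}^2 = S(\gamma)\,S(\delta)$; under the standing hypotheses in Equation~(\ref{eqn:constraintsEC}) the left-hand side is nonzero, so $S(\gamma)$ and $S(\delta)$ do not vanish and the leading coefficient $S(\gamma)\,P(x_0)$ of the cubic in Equation~(\ref{eqn:JacFib_dual2}) is not identically zero on $\mathbb{P}_{x_0}$.

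Next I would put the fibration $\mathcal{Z}'=\coprod_{x_0}\mathcal{E}'_{x_0}$ into Weierstrass form. Each fibre $\mathcal{E}'_{x_0}$ carries the rational $2$-torsion point $(x,y)=(0,0)$, so over the generic point of $\mathbb{P}_{x_0}$ the substitution $(x,y)\mapsto\bigl(x/(S(\gamma)P(x_0)),\,y/(S(\gamma)P(x_0))\bigr)$, followed by multiplication through by $(S(\gamma)P(x_0))^2$, is a birational equivalence of $\mathbb{P}_{x_0}$-schemes carrying Equation~(\ref{eqn:JacFib_dual2}) to
\[
 y^2 = x^3 + \bigl(2\mu\,P(x_0) + (\gamma-\delta)^2 Q(x_0)\bigr)\,x^2 + S(\gamma)\,S(\delta)\,P(x_0)^2\,x .
\]
Substituting the two identities from the previous step rewrites the coefficient of $x^2$ as $2\bigl(\mu\,P(x_0)+\nu\,Q(x_0)\bigr)$ and that of $x$ as $(\mu^2-\kappa\nu)\,P(x_0)^2$. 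A further constant rescaling $(x,y)\mapsto(x/2,\,y/(2\sqrt{2}))$, defined over $\mathbb{C}$, then produces Equation~(\ref{eqn:JacFib_dual}) verbatim. Since every transformation used is an isomorphism of families over $\mathbb{P}_{x_0}$ compatible with the projections, this shows that $\mathcal{Z}'$ is birationally equivalent to the Jacobian elliptic K3 surface of Equation~(\ref{eqn:JacFib_dual}), as claimed. Comparing with Remark~\ref{rem:Zfibration_dual}, this simultaneously identifies $\mathcal{Z}'$ with the van~Geemen-Sarti dual of $\mathcal{Z}$ from Proposition~\ref{prop:JAC}, which reflects, at the level of elliptic surfaces, the $2$-isogeny $\mathcal{E}_t\to\mathcal{E}'_t$ in diagram~(\ref{d2}).

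There is no genuine obstacle here: the computation is short and each step is forced. The only points that require a little care are, first, confirming that the normalizing factor $S(\gamma)P(x_0)$ is not identically zero, which follows from the non-degeneracy constraints in Equation~(\ref{eqn:constraints}); and second, verifying the polynomial identity $\mu^2-\kappa\nu=S(\gamma)S(\delta)$ relating the moduli in Equation~(\ref{eqn:moduli}) to the values of the cubic $S$ of $\mathcal{E}$, which is already established in Section~\ref{ssec:AJ_K3}. One should also note that the only nonrational scalars introduced (a factor $\sqrt{2}$) are constants, so the equivalence is genuinely defined over $\mathbb{C}(x_0)$.
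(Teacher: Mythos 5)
Your proposal is correct and takes essentially the same route as the paper: an explicit fiberwise rescaling over $\mathbb{C}(x_0)$ that matches the Weierstrass model of Equation~(\ref{eqn:JacFib_dual2}) with Equation~(\ref{eqn:JacFib_dual}) via the identities $\nu=(\gamma-\delta)^2/2$ and $\mu^2-\kappa\nu=S(\gamma)\,S(\delta)$. The only difference is that you factor the change of variables into two steps (normalizing the leading coefficient, then a constant rescaling) where the paper writes a single combined map, and your intermediate computations check out.
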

\begin{proof}
Using $\mu^2-\nu \kappa = S(\gamma) \, S(\delta)$ the map $(x,y) \mapsto (4 P(x_0) \, S(\gamma) \, x , \ 2\sqrt{2} \, P(x_0) \, S(\gamma) \, y)$ provides a birationally equivalence of the Jacobian elliptic fibration in Equation~(\ref{eqn:JacFib_dual}) with Equation~(\ref{eqn:JacFib_dual2}).
\end{proof}
\par We also describe the remaining irreducible component of $W^1_4(\mathcal{F}_{x_0})$ which is $\widetilde{\mathcal{C}}_{x_0}=\phi^{-1}(\mathrm{C})$. There is a classification of the singular fibers of pencils of curves of genus two due to Namikawa and Ueno \cite{MR0369362} analogous to the Kodaira classification of singular fibers of Jacobian elliptic fibrations \cite{MR0184257}. We have the following:
\begin{proposition}
Let  $\mathcal{F}_{x_0}$ be a smooth curve of genus five and $\mathrm{L}$, $\mathrm{C} \subset \Gamma^+$ as above. The double cover of $\mathbb{P}^1\cong \mathrm{C}$ branched on the six points of $\mathrm{C} \cap (\mathrm{L} \cup \Gamma^-)$ is an irreducible component of $W^1_4(\mathcal{F}_{x_0})$, and $\widetilde{\mathcal{C}}_{x_0}=\phi^{-1}(\mathrm{C})$ is the curve of genus two
\beq
\label{eqn:pencilG2}
\begin{split}
\widetilde{\mathcal{C}}_{x_0}: &\quad  \eta^2 =  \Big( \big( \gamma \xi + 1 \big) P(x_0) + \xi \,Q(x_0) \Big) \Big( S(\gamma) \, \xi^3 + \big(3 \gamma^2 + S'(0) \big) \xi^2 + 3 \gamma  \xi \Big)\\
 \times & \Big( \big( S(\gamma) \, \xi^2 + (2 \gamma^2 + \gamma\delta + S'(0)) \, \xi^2 + \gamma + \delta \big) \, P(x_0) - \big( (\gamma-\delta) \xi +1 \big) \, Q(x_0) \Big) \,,
\end{split}
\eeq
where $S(\xi)$, $P(x)$, $Q(x)$ are defined in Section~\ref{ssec:AJM} and Equation~(\ref{eqn:moduli}). Moreover, the family $\widetilde{\mathcal{C}}_{x_0}$ has six singular fibers of Namikawa-Ueno type $[I_0-I_0^*-0]$ over the roots of $[ P, Q ]_{x_0}$.  
\end{proposition}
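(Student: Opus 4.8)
The plan is to realize $\widetilde{\mathcal{C}}_{x_0}=\phi^{-1}(\mathrm{C})$ as an explicit double cover of $\mathrm{C}\cong\mathbb{P}^1$ and then to conclude via the same Igusa--Clebsch comparison used in Proposition~\ref{prop:isog_genus2_curve}, Proposition~\ref{prop:normalization} and Corollary~\ref{cor:genus_5_prym}. Recall that $\phi\colon W^1_4(\mathcal{F}_{x_0})\to\Gamma$ is branched exactly over the node set $\Gamma'=\operatorname{Sing}(\Gamma)$ and that $\Gamma^+=\mathrm{L}\cup\mathrm{C}$ with $\mathrm{L}=\mathrm{V}(\alpha_0)$; hence $\phi^{-1}(\mathrm{C})$ is the double cover of $\mathrm{C}$ branched over $\mathrm{C}\cap\Gamma'=(\mathrm{C}\cap\mathrm{L})\cup(\mathrm{C}\cap\Gamma^-)$, a set of $2+4=6$ points (no triple points of $\Gamma$). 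Under the standing hypotheses~(\ref{eqn:constraint0}) and~(\ref{eqn:constraintsEC}) these six points are distinct for generic $x_0$, so $\widetilde{\mathcal{C}}_{x_0}$ is a smooth curve of genus two, the third irreducible component of $W^1_4(\mathcal{F}_{x_0})$ besides $\mathcal{C}'=\phi^{-1}(\Gamma^-)$ and $\mathcal{E}'_{x_0}=\phi^{-1}(\mathrm{L})$.

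To get the equation I would first make $\mathrm{C}$ explicit. Since $q_1,q_2$ and the $(X,Y)$-part $\tilde q_0$ of $q_0$ carry no $Z$, the net of $5\times 5$ quadric matrices $\alpha_0 Q_0+\alpha_1 Q_1+\alpha_2 Q_2$ is block diagonal in the coordinates $(V,W),(X,Y),(Z)$, so $\det=-\tfrac12\,\alpha_0\,P(x_0)\,(\alpha_0^2-4\alpha_1\alpha_2)\,\det M(\alpha)$ with $M(\alpha)=\alpha_0\tilde q_0+\alpha_1 q_1+\alpha_2 q_2$ the pencil of binary quadratic forms in $X,Y$; thus $\mathrm{L}=\mathrm{V}(\alpha_0)$, $\Gamma^-=\mathrm{V}(\alpha_0^2-4\alpha_1\alpha_2)$ and $\mathrm{C}=\mathrm{V}(\det M(\alpha))$. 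A point of $\mathrm{C}$ corresponds to a value of $\alpha$ for which $M(\alpha)$ is a perfect square $c\,(X-\xi Y)^2$; matching coefficients gives a constant $3\times 3$ linear system whose Cramer solution yields a rational parametrization $\xi\mapsto[\alpha_0(\xi):\alpha_1(\xi):\alpha_2(\xi)]$ of $\mathrm{C}$ with $\alpha_i(\xi)$ explicit quadratics in $\xi$. Exactly as in Proposition~\ref{cor:genus_5_bielliptic}, where $\mathcal{E}'_{x_0}=\phi^{-1}(\mathrm{L})$ was cut out by $(\alpha_0^2-4\alpha_1\alpha_2)\,\mathrm{C}\,|_{\mathrm{L}}$, the double cover $\phi^{-1}(\mathrm{C})$ is then $\eta^2=\bigl(\alpha_0(\alpha_0^2-4\alpha_1\alpha_2)\bigr)\big|_{\mathrm{C}}$, a sextic in $\xi$ after substituting the parametrization. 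Expanding and simplifying with the basic identities $R(x,x)=P(x)$, $R_1(x,x)=Q(x)$, the relation~(\ref{eqn:relat}), $S(\xi)=\xi^3+f\xi+g$, $S'(0)=f$, and the formulas~(\ref{eqn:moduli}) for $\kappa,\mu,\nu$ should produce the right-hand side of~(\ref{eqn:pencilG2}) up to a square factor; I would then finish by checking that the Igusa--Clebsch invariants of the two genus-two curves coincide in $\mathbb{P}(2,4,6,10)$, so that the curves are isomorphic.

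For the singular fibers I would compute the discriminant in $\xi$ of the sextic on the right-hand side of~(\ref{eqn:pencilG2}) as a polynomial in $x_0$. Using $\operatorname{Discr}_x(Q)=S(0)^2\operatorname{Discr}_x(P)$, the resultant identity $\operatorname{Res}_x(\alpha P+\beta Q,[P,Q])=2^{-8}\operatorname{Discr}_x(P)^3\,\beta^6\,S(\alpha/\beta)^2$ from the proof of Proposition~\ref{prop:hyperelliptic}, and the fact that $P(x_0)$ and $[P,Q]_{x_0}$ never vanish simultaneously, this discriminant is, up to a nonzero constant and powers of $P(x_0)$, $\operatorname{Discr}_x(P)$ and $S(0)$, a power of $[P,Q]_{x_0}$; since $[P,Q]_{x}$ has degree six, the family $\widetilde{\mathcal{C}}_{x_0}$ has exactly six bad fibers, over the roots of $[P,Q]_{x_0}$, and no others. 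To pin down the type I would analyze the stable reduction at a root $x_0^\ast$ of $[P,Q]$: one shows that exactly two of the six branch points of $\widetilde{\mathcal{C}}_{x_0}\to\mathbb{P}^1$ collide transversally there while the remaining four stay distinct — this is where the non-$2$-torsion conditions~(\ref{eqn:constraintsEC}) are used, to exclude any further coincidence — so the special fiber is an elliptic curve carrying a single node, and reading off the corresponding entry of the Namikawa--Ueno table~\cite{MR0369362} gives the type $[I_0-I_0^*-0]$, uniformly for all six fibers.

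The genuinely laborious step is the parametrization of $\mathrm{C}$ together with the simplification of $\bigl(\alpha_0(\alpha_0^2-4\alpha_1\alpha_2)\bigr)|_{\mathrm{C}}$ into the factored sextic of~(\ref{eqn:pencilG2}): the intermediate expressions are bulky rational functions in $x_0,\gamma,\delta$ and the coefficients of $P$, and collapsing them requires systematic use of~(\ref{eqn:relat}) and the $\kappa,\mu,\nu$ formulas. Invoking the Igusa--Clebsch invariants is the pragmatic way to sidestep matching the sextic on the nose, as elsewhere in the paper; the Namikawa--Ueno identification is conceptually routine once the sextic is known, but still needs the careful local computation of the stable model at the six bad fibers to confirm the type is the same at each.
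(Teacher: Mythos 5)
Your overall architecture matches the paper's: parametrize the conic $\mathrm{C}$ rationally, restrict $\alpha_0(\alpha_0^2-4\alpha_1\alpha_2)$ to it to get the sextic double cover, and finish the identification of the curve by comparing Igusa--Clebsch invariants in $\mathbb{P}(2,4,6,10)$ rather than matching the sextic on the nose. (The paper parametrizes $\mathrm{C}$ via the rational point $[\alpha_0:\alpha_1:\alpha_2]=[-2:1:1]$ instead of your perfect-square/Cramer scheme, but that is a cosmetic difference.) The block-diagonal factorization of $\det Q(\alpha)$ and the identification of the six branch points as $\mathrm{C}\cap(\mathrm{L}\cup\Gamma^-)$ are also fine.

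There is, however, a genuine error in your determination of the fiber type. You assert that at a root of $[P,Q]_{x_0}$ exactly two of the six branch points collide transversally, so that the special fiber is an irreducible elliptic curve with one node. That degeneration has Namikawa--Ueno type $[I_{1-0-0}]$, not $[I_0-I_0^*-0]$; the latter requires \emph{three} branch points to come together (generically), producing after stable reduction two elliptic curves meeting in a point, one of which carries the $I_0^*$ monodromy $-I$. This is exactly what the paper's resultant computation detects: writing the sextic as $p_1p_2p_3$ with $\deg p_1=1$, $\deg p_2=3$, $\deg p_3=2$, all three pairwise resultants $\operatorname{Res}_\xi(p_i,p_j)$ are proportional to $S(0)P(x_0)^3-S'(0)P(x_0)^2Q(x_0)-Q(x_0)^3=\tfrac14\bigl([P,Q]_{x_0}\bigr)^2$, so at each root of $[P,Q]_{x_0}$ one root from \emph{each} of the three factors collides with the others simultaneously. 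The resulting local model is $\eta^2=(\xi^3+\alpha\xi+1)(\xi^3+\epsilon^2\beta\xi+\epsilon^3)$, whose Igusa--Clebsch valuations $[I_2:\epsilon^2I_4':\epsilon^2I_6':\epsilon^6I_{10}']$ pin down the type $[I_0-I_0^*-0]$. Your two-point-collision picture, if carried through, would either contradict the resultant identity or output the wrong fiber type, so this step of your argument needs to be replaced by the triple-collision analysis (or by the valuation-of-invariants criterion the paper uses).
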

\begin{proof}
One checks that the component $\mathrm{C}=\mathrm{V}(q(\alpha_0,\alpha_1,\alpha_2)) \subset \Gamma^+$ contains the rational point $[\alpha_0: \alpha_1: \alpha_2] = [ -2: 1 :1]$, and a rational parametrization is given by setting $\alpha_1 = \alpha_2 +(-1 + (\gamma-\delta) \xi/2) (\alpha_0 + 2 \alpha_2)$. We obtain the double cover of $\mathbb{P}^1 \cong \mathrm{C}$ branched on the six intersection points of $\mathrm{L} \cup \Gamma^-$, by substituting the rational parametrization into $\alpha_0 (\alpha_0^2 - 4 \alpha_1 \alpha_2)$. We obtain
\beq
\label{eqn:pencilG2_proof}
\begin{split}
 \eta^2 = &  \underbrace{\Big( \big( \gamma \xi + 1 \big) P(x_0) + \xi \,(x_0) \Big)}_{=: \, p_1(\xi)} \underbrace{\Big( S(\gamma) \, \xi^3 + \big(3 \gamma^2 + S'(0) \big) \xi^2 + 3 \gamma  \xi \Big)} _{=: \, p_2(\xi)}\\
 \times & \underbrace{\Big( \big( S(\gamma) \, \xi^2 + (2 \gamma^2 + \gamma\delta + S'(0)) \, \xi^2 + \gamma + \delta \big) \, P(x_0) - \big( (\gamma-\delta) \xi +1 \big) \, Q(x_0) \Big)}_{=: \, p_3(\xi)} \,.
\end{split}
\eeq
We compute the following resultants
\beqn
\begin{split}
 \operatorname{Res}_\xi\Big( p_1(\xi) , p_2(\xi) \Big) & = S(\gamma) \, S(\delta) \, \Big( S(0) \, P(x_0)^3 - S'(0) \, P(x_0)^2 Q(x_0)- Q(x_0)^3 \Big) \,,\\
 \operatorname{Res}_\xi\Big( p_1(\xi) , p_3(\xi) \Big) & = \operatorname{Res}_\xi\Big( p_2(\xi) , p_3(\xi) \Big)  = S(0) \, P(x_0)^3 - S'(0) \, P(x_0)^2 Q(x_0) - Q(x_0)^3 \,.
\end{split} 
\eeqn 
We then compute the Igusa-Clebsch invariants of the curve of genus two, using the same normalization as in \cites{MR3712162,MR3731039}.  Denoting the Igusa-Clebsch invariants of the curve of genus two in Equation~(\ref{eqn:pencilG2_proof})  by $[ I_2 : I_4 : I_6 : I_{10} ] \in \mathbb{P}(2,4,6,10)$, one checks that for
\beq 
 \epsilon= 4 \Big(S(0) \, P(x_0)^3 - S'(0) \, P(x_0)^2 Q(x_0)- Q(x_0)^3\Big) = \Big(\,  [ P, Q ]_{x_0} \Big)^2 \,.
\eeq
we obtain
\beq
[ I_2 : I_4 : I_6 : I_{10} ]  = \Big[ I_2 \ : \ \epsilon^2  I'_4\ : \ \epsilon^2 I'_6 \ : \ \epsilon^6 I'_{10}\Big] \,,
\eeq 
where $I_2, I'_4,  I'_6, I'_{10}$ are polynomials in $x_0$ that do not have a common factor with $[ P, Q ]_{x_0}$. Using the results of Namikawa and Ueno \cite{MR0369362} we conclude that a local model for $\widetilde{\mathcal{C}}_{x_0}$ near $\epsilon=0$ is given by
\beq
 \eta^2 = \Big( \xi^3 + \alpha \, \xi + 1\Big)  \Big( \xi^3 + \epsilon^2 \beta \, \xi + \epsilon^3 \Big)  \,,
\eeq
where $\alpha, \beta$ are suitable rational functions that do not vanish for $\epsilon=0$.
\end{proof}

\section{Proof of the main results}
\label{section4}
We have the following:
\begin{proposition}
\label{prop:main}
For
\beq
\label{eqn:polynomials}
\begin{split}
  P(x)  =x^4 -\Lambda_1 x^2  + 1 \,, \qquad
  R(x, x_0)   = x^2 x_0^2 - \frac{\Lambda_1}{6} \big( x^2 + 4 x_0 x + x_0^2 \big) + 1 \,, \\
  R_1(x, x_0) = - \frac{2}{3} \Lambda_1 \, x^2 x_0^2 + \left( 2 - \frac{5}{18} \Lambda_1^2\right) \, x x_0 +  \left( 1 - \frac{1}{36} \Lambda_1^2\right) \big(x^2+x_0^2\big) - \frac{2}{3} \Lambda_1 \,,
\end{split}  
\eeq
and parameters 
\beq
\label{eqn:moduli_gd_intro2}
 \gamma+\delta= - \frac{c_1}{3\kappa_{1,5}\kappa_{2,3}c_2} \,, \qquad \gamma\delta = \frac{c_0}{9\lambda_1\lambda_2\lambda_3c_2} \,,
\eeq 
with $c_0, c_1, c_2$ given by either Equation~(\ref{eqn:moduli_gd_1}) or Equation~(\ref{eqn:moduli_gd_2}), the Jacobian elliptic K3 surfaces $\mathcal{Y}$ and $\mathcal{Z}$ in Equation~(\ref{eqn:B12}) and Equation~(\ref{eqn:JacFib}) coincide for $v=x_0$. In particular, the Jacobian elliptic fibrations realize the fibration from Proposition~\ref{prop:Garbagnati} where $\mathrm{K}_0, \dots, \mathrm{K}_3$ are the divisor classes from Corollary~\ref{cor:sections_and_divisors}. The same applies to the Jacobian elliptic K3 surfaces $\mathcal{X}'$ and $\mathcal{Z}'$ in Equation~(\ref{XXsurface}) and Equation~(\ref{eqn:JacFib_dual})/Equation~(\ref{eqn:JacFib_dual2}), respectively.
\end{proposition}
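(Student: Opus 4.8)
The plan is to prove the statement by a direct comparison of Weierstrass models, reading off the geometric conclusions from the structural results of Section~\ref{ssec:AJ_K3} and Section~\ref{sec:g5_curves}. The claim has three parts: that $\mathcal{Z}$ and $\mathcal{Y}$ coincide under $v=x_0$, that the resulting fibration is Garbagnati's with $\mathrm{K}_0,\dots,\mathrm{K}_3$ as in Corollary~\ref{cor:sections_and_divisors}, and the analogous statement for $\mathcal{X}'$ and $\mathcal{Z}'$.

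First I would feed the explicit polynomials of Equation~\eqref{eqn:polynomials} into the machinery of Section~\ref{ssec:AJ_K3}. One checks in one line that $R(x,x)=P(x)$ and that the factorization identity Equation~\eqref{eqn:relat} holds with this $R_1$, equivalently that $Q(x)=R_1(x,x)=\frac13 PP''-\frac14(P')^2=-\frac23\Lambda_1 x^4+\big(4-\frac13\Lambda_1^2\big)x^2-\frac23\Lambda_1$. Hermite's formulas Equation~\eqref{eqn:hermite} applied to $P(x)=x^4-\Lambda_1 x^2+1$ give $S(\xi)=\xi^3-\big(4+\frac13\Lambda_1^2\big)\xi+\frac{2}{27}\Lambda_1\big(36-\Lambda_1^2\big)$, so $S'(0)=-4-\frac13\Lambda_1^2$ and $S(0)=\frac{2}{27}\Lambda_1(36-\Lambda_1^2)$; this is exactly where the hypothesis that $\mathcal{C}$ be smooth and very general enters, since $\operatorname{Discr}_x(Q)=S(0)^2\operatorname{Discr}_x(P)\neq0$ becomes $\Lambda_1\neq 0,\pm2,\pm6$ and Equation~\eqref{eqn:constraintsEC} controls the admissible $\gamma,\delta$.

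Next I would insert the parameters $\gamma+\delta,\gamma\delta$ of Equation~\eqref{eqn:moduli_gd_intro2} into the formulas for $\kappa,\mu,\nu$ in Equation~\eqref{eqn:moduli}, using $l^2=\kappa_{1,5}^2\kappa_{2,3}^2=\lambda_1\lambda_2\lambda_3$, the expressions for $c_0,c_1,c_2$ from Equation~\eqref{eqn:moduli_gd_1} (respectively Equation~\eqref{eqn:moduli_gd_2}), and the identity $c_1^2-4c_0c_2=144\kappa_p^2(\lambda_2-1)(\lambda_3-1)(\lambda_2-\lambda_1)(\lambda_3-\lambda_1)$; note that $\kappa,\mu,\nu$ then become \emph{rational} in the $\lambda_i$ and $\kappa_p$. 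Substituting the resulting $\kappa,\mu,\nu$ and $P(x_0),Q(x_0)$ into the Weierstrass equation Equation~\eqref{eqn:JacFib} for $\mathcal{Z}$, and in parallel rewriting Equation~\eqref{eqn:B12} for $\mathcal{Y}$ in terms of the $\lambda_i$ via $\Lambda_i=(\lambda_i+\lambda_j\lambda_k)/l$, one exhibits an admissible fibrewise rescaling $(X,Y)\mapsto(\rho^2X,\rho^3Y)$ with $v=x_0$ and a rational factor $\rho=\rho(x_0)$, built from $l$ and the $\lambda_i$, taking one equation into the other — concretely, both models have $(0,0)$ as $2$-torsion, and $\mu_\pm P+\nu Q$ (with $\mu_\pm=\mu\pm\sqrt{\mu^2-\kappa\nu}$) must be matched to the palindromic quartics $\propto(v^4-v^2\Lambda_{2,3}+1)$ appearing as the non-neutral roots in Equation~\eqref{eqn:B12}; this is the computational heart of the argument and is where the relations between $\Lambda_1,\Lambda_2,\Lambda_3$ and the $(2,2)$-isogenous curve data from \cite{Clingher:2018aa} get invoked. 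Once $\mathcal{Z}\cong\mathcal{Y}$ is established, Propositions~\ref{prop:JAC}, \ref{lem:B12} and~\ref{prop:Garbagnati} identify the common surface as $\operatorname{Kum}(\mathsf{B}_{p_{46}})$ carrying Garbagnati's fibration, and Corollary~\ref{cor:sections_and_divisors} matches the four sections $p''_n$ of Lemma~\ref{lem:sections} with $\mathrm{K}_0,\dots,\mathrm{K}_3$ — the point being that these are precisely the four classes with $\mathrm{F}\circ\mathrm{K}_i=1$ meeting only neutral fiber components, which pins the correspondence down. For the dual assertion I would use $\mu^2-\nu\kappa=S(\gamma)S(\delta)$, so that by the corollary in Section~\ref{sec:g5_curves} the surface $\mathcal{Z}'=\coprod_{x_0}\mathcal{E}'_{x_0}$ of Equation~\eqref{eqn:JacFib_dual2} is birational to Equation~\eqref{eqn:JacFib_dual}; applying the same substitutions and a further Weierstrass rescaling with $v=x_0$ to Equation~\eqref{eqn:JacFib_dual} and comparing with Equation~\eqref{XXsurface} gives $\mathcal{Z}'\cong\mathcal{X}'=\operatorname{Kum}(\operatorname{Jac}\mathcal{C}')$ by Proposition~\ref{lem:XX}. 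Alternatively, once $\mathcal{Z}\cong\mathcal{Y}$ is known, the dual statement is immediate: the van~Geemen-Sarti $2$-isogeny Equation~\eqref{isog_intro2} on $\mathcal{Z}$ is fibrewise translation by $\tau_\mathcal{Z}$ and is carried to the isogeny Equation~\eqref{isog_intro} on $\mathcal{Y}$, translation by $\tau_\mathcal{Y}$.

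I expect the genuine difficulties to be twofold. First, the sheer size of the symbolic identity in the middle step — matching two Weierstrass cubics whose coefficients are quartics in $x_0$ with coefficients that are low-degree polynomials in $\lambda_1,\lambda_2,\lambda_3,\kappa_{1,5},\kappa_{2,3}$ modulo $\kappa_{1,5}^2=\lambda_1$ and $\kappa_{2,3}^2=\lambda_2\lambda_3$ — together with the fact that $\mu_\pm$ and the roots of Equation~\eqref{eqn:B12} are individually only rational after adjoining $\sqrt{\mu^2-\kappa\nu}=\sqrt{S(\gamma)S(\delta)}$, so that the matching is an identity of \emph{unordered pairs} unless one also tracks the Galois conjugation $\Lambda_2\leftrightarrow\Lambda_3$. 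Second, the combinatorial bookkeeping of \emph{which} root of $P(x_0)=0$ is sent to the zero section: it must be chosen so that the induced $2$-torsion section $\tau_\mathcal{Z}$ corresponds to the same G\"opel group $G'$ that determines $\tau_\mathcal{Y}$ and the even eight $\Delta_{p_{46}}$ defining $\mathcal{Y}=\operatorname{Kum}(\mathsf{B}_{p_{46}})$, and this must stay consistent with the sign choices $\pm\kappa_{1,5}$ (the two sheets of $\mathfrak{M}'\to\mathfrak{M}$) and the two cases of Proposition~\ref{prop:isog_genus2_curve}. It is this last point that upgrades the conclusion from an isomorphism of abstract K3 surfaces to the claimed identification of the marked elliptic fibrations together with their distinguished sections $\mathrm{K}_0,\dots,\mathrm{K}_3$.
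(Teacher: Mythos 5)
Your proposal is correct and follows essentially the same route as the paper: verify $\mu^2-\kappa\nu=S(\gamma)S(\delta)$, rescale the Weierstrass model~(\ref{eqn:JacFib}) by $\eta_{q_\gamma}\eta_{q_\delta}$ so that its coefficients are expressed through $\xi_{q_\gamma\pm q_\delta}$, and then match the result coefficient-by-coefficient with~(\ref{eqn:B12}) under $v=x_0$, absorbing the sign ambiguities of $\pm\eta_{q_\gamma},\pm\eta_{q_\delta}$ (equivalently the unordered pair $\{\Lambda_2,\Lambda_3\}$ and the sheets $\pm\kappa_p$) into an admissible change of Weierstrass coordinates. The remaining identifications via Propositions~\ref{prop:Garbagnati}, \ref{lem:B12}, \ref{prop:JAC} and Corollary~\ref{cor:sections_and_divisors}, and the dual comparison of $\mathcal{Z}'$ with $\mathcal{X}'$, are handled exactly as the paper does.
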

\begin{proof}
We have the two (pairs of) points $\pm q_\gamma,  \pm q_\delta \in \mathcal{E}$ on the elliptic curve in Equation~(\ref{eqn:EC}) with coordinates $(\xi_{q_\gamma} = \gamma, \, \pm \eta_{q_\gamma})$ with $\eta_{q_\gamma}^2 = S(\gamma)$ and $(\xi_{q_\delta} = \delta, \, \pm \eta_{q_\delta})$ with $\eta_{q_\delta}^2 = S(\delta)$. One then checks that $\mu^2-\kappa \nu =S(\gamma) S(\delta)$. By a rescaling one obtains from Equation~(\ref{eqn:JacFib}) the Weierstrass model
\beq
\label{eqn:JacFib2}
 Y^2 = X^3 -2 \left(\frac{\mu \,P(x_0)}{\eta_{q_\gamma}\eta_{q_\delta}} + \frac{\nu \, Q(x_0)}{\eta_{q_\gamma}\eta_{q_\delta}}  \right) \, X^2 +  \left( \left(\frac{\mu \,P(x_0)}{\eta_{q_\gamma}\eta_{q_\delta}}  + \frac{\nu\,Q(x_0)}{\eta_{q_\gamma}\eta_{q_\delta}}   \right)^2 - P(x_0)^2 \right) X .
\eeq
with
\beq
\begin{split}
  \frac{\mu}{\eta_{q_\gamma}\eta_{q_\delta}}= \frac{\xi_{q_\gamma+q_\delta} + \xi_{q_\gamma-q_\delta}}{\xi_{q_\gamma+q_\delta} - \xi_{q_\gamma-q_\delta}}  \,, \qquad  \frac{\nu}{\eta_{q_\gamma}\eta_{q_\delta}} = \frac{2}{\xi_{q_\gamma+q_\delta} - \xi_{q_\gamma-q_\delta}} \,.
\end{split}
\eeq
The choice of sign $\pm \eta_{q_\gamma}$ and $\pm \eta_{q_\gamma}$ does not matter as it can always be absorbed in a rescaling $(X,Y) \mapsto (-X,iY)$. Plugging in $P(x)$ and $\gamma, \delta$, one checks that the Weierstrass models in Equation~(\ref{eqn:B12}) and Equation~(\ref{eqn:JacFib2}) are identical for $v=x_0$. In particular, it follows that Equation~(\ref{eqn:moduli_gd_1}) and~(\ref{eqn:moduli_gd_2}) are the only solutions that make the Jacobian elliptic K3 surfaces $\mathcal{Y}$ and $\mathcal{Z}$ coincide up to a sign change $\pm \kappa_p$.
\end{proof}
\par According to Remark~\ref{rem:isog_curves} there are exactly three inequivalent G\"opel groups containing a given point of order two $p_{46} \in \operatorname{Jac}(\mathcal{C})[2]$. The point of order two determines a rational double cover $\phi_{\Delta_{p_{46}}}: \mathcal{Y} \dasharrow \mathcal{X}=\operatorname{Kum}(\operatorname{Jac} \mathcal{C})$. Remark~\ref{rem:choice_torsion_sections} shows that the full G\"opel group $G' \ni p_{46}$ also determines Weierstrass models in Equation~(\ref{kummer_middle_ell_p_W}) and Equation~(\ref{eqn:B12}), together with a marked 2-torsion section. Following Remark~\ref{rem:choice_even_eight}, the marked 2-torsion section on $\mathcal{Y}$ defines an even eight $\Delta_{G'}$ of exceptional curves on $\mathcal{Y} = \operatorname{Kum}(\mathsf{B})$ which in turn determines a rational double cover $\phi_{\Delta_{G'}}: \mathcal{X}' \dasharrow \mathcal{Y}$. Thus, the Kummer surface $\mathcal{X}' =\operatorname{Kum}(\operatorname{Jac} \mathcal{C}')$ is obtained from the G\"opel group $G'$ such that $\operatorname{Jac}(\mathcal{C}')=\operatorname{Jac}(\mathcal{C})/G'$. As $\phi_{\Delta_{G'}}$ is associated with a van~Geemen-Sarti involution this establishes a Jacobian elliptic fibration on $\mathcal{X}'$.
\subsection{Proof of Theorem~\ref{main1}}
\label{proof_thm}
Let us first explain the rescaling that yields the pencil $\widetilde{\mathcal{D}}_{x_0}$ in Equation~(\ref{eqn:genus-three_intro}) from Equation~(\ref{eqn:genus-three_blow-up}) using the parameters in Equation~(\ref{eqn:polynomials}) and Equation~(\ref{eqn:moduli_gd_intro2}). We set $x= \sqrt{l} X$, $x_0= \sqrt{l} t$, $\tilde{z} = Z/\sqrt{9 c_2 l}$ with $l=\kappa_{1,5} \kappa_{2,3}$, and 
\beq
\begin{array}{rclcrcl}
 p(X,Y) & = & P\left(\frac{\sqrt{l} X}{Y} \right) Y^4 \,, &&
 \Delta^{(t)}(X,Y) & = &  \big(X - t \, Y\big)^2 \,,\\
 r^{(t)}(X, Y) & = &6 \, R\left( \frac{\sqrt{l} X}{Y}, \sqrt{l} t\right) Y^2 \,, &&
 r^{(t)}_1(X, Y) & = & -36 \, l \,  R_1\left( \frac{\sqrt{l}X}{Y}, \sqrt{l} t \right) Y^2 \,,\\
\end{array} 
\eeq
multiply Equation~(\ref{eqn:genus-three_blow-up}) with $(9 c_2 l)^2$ to obtain the equation for $\mathcal{D}_t :=\widetilde{\mathcal{D}}_{x_0}$ given by
\beq
\label{eqn:genus-three_end}
 \mathcal{D}_t: \quad p_0^{(t)} \, Z^4 +  \Big(c_2 r_1^{(t)} +  c_1  r^{(t)} + c_0 \Delta^{(t)} \Big)\, Z^2 + 9 \, \Big(c_1^2- 4c_0c_2\Big)  \, p =0 \,,
\eeq
where $c_0, c_1, c_2$ are given by Equation~(\ref{eqn:moduli_gd_1}) or Equation~(\ref{eqn:moduli_gd_2}), and $\kappa_p^2=\lambda_2\lambda_3$ or $\kappa_p^2=\lambda_1$, respectively. Note that changing from $\mathcal{D}_{x_0}$ to $\widetilde{\mathcal{D}}_{x_0}$ in Equation~(\ref{eqn:genus-three_blow-up}) does not affect the smooth fibers -- this also applies to Sections~\ref{proof2}/\ref{proof3}.  In the following, we will restrict ourselves to the case of Equation~(\ref{eqn:moduli_gd_2}), i.e., $\kappa_{1,5}^2=\lambda_1$, and $\mathfrak{M}'=\mathfrak{M}'_{p_{15}}$. The other case is completely analogous. 
\par (1) It follows from Proposition~\ref{prop:main} that the Jacobian elliptic K3 surfaces $\mathcal{Z}$ and $\mathcal{Y}$ in Equation~(\ref{eqn:JacFib}) and Equation~(\ref{eqn:B12}) coincide for $x_0=v$. It was proven in Proposition~\ref{lem:B12} that the K3 surface $\mathcal{Y}$ is the Kummer surface of an abelian surface $\mathsf{B}_{p_{46}}$ with a polarization of type $(1,2)$. Proposition~\ref{prop:pencil} then shows that the pencil of curves of genus three $\mathcal{D}_{x_0}$ is obtained as double cover of  $\mathcal{Z}= \coprod_{x_0} \mathcal{Q}_{x_0}$ branched on the divisor classes $\mathrm{K}_0, \dots, \mathrm{K}_3$ in Corollary~\ref{cor:sections_and_divisors}.  According to Theorem~\ref{thm:Barth}, this is precisely the pencil on $\mathsf{B}$ realizing the linear system $|\mathscr{V}|$ for the $(1,2)$-polarization on $\mathsf{B}$ given by an ample symmetric line bundle $\mathscr{V}$ with $\mathscr{V}^2 =4$. Thus, the claim follows.
\par~(2) and (3) One checks that the discriminant in Equation~(\ref{eqn:discriminant}) vanishes for $t^2 = \lambda_1, \lambda_2\lambda_3$, $t^2 = \lambda_2, \lambda_1\lambda_3$, and $t^2 = \lambda_3, \lambda_1\lambda_2$. The proof then follows from Proposition~\ref{prop:normalization} together with Proposition~\ref{prop:isog_genus2_curve}.
\par~(4) The claims follows from Proposition~\ref{prop:hyperelliptic} as follows: one checks that the roots of $[ P, Q ]_{x_0}=0$ are given by $t^2 = 0, \pm \lambda_1 \lambda_2 \lambda_3, \infty$ and that for $\gamma, \delta$ given by Equations~(\ref{eqn:moduli_gd_intro2}), the condition $2(q_\gamma \pm q_\delta) \not = 0$ is satisfied. \qed
\subsection{Proof of Theorem~\ref{main2a}}
\label{proof2}
The point $p_{46} \in \operatorname{Jac}(\mathcal{C})[2]$ determines a 2-isogeny $\Phi: \mathsf{B} \to \operatorname{Jac}(\mathcal{C})$ which covers  $\phi_{\Delta_{p_{46}}}: \mathcal{Y} \dasharrow \mathcal{X}$. The Weierstrass model (with marked 2-torsion) on $\mathcal{Y}\cong \mathcal{Z}$ in Equation~(\ref{eqn:B12}) is then used in Proposition~\ref{prop:pencil} to construct the pencil $\mathcal{D}_{x_0}$ of bielliptic curves of genus three realizing $|\mathscr{V}|$ where $\mathscr{V}$ is the polarization line bundle on $\mathsf{B}$ induced by pull-back. The equivalent pencil $\widetilde{\mathcal{D}}_{x_0}$ has the property that the normalization of four singular fibers is given by the $(2,2)$-isogenous curve $\mathcal{C}'$; see Proposition~\ref{prop:normalization}. The normal form for $\widetilde{\mathcal{D}}_{x_0}$ in Equation~(\ref{eqn:g3curve_AG}) also determines an unramified double cover $\rho'_{x_0}: \mathcal{F}_{x_0} \to \widetilde{\mathcal{D}}_{x_0}$ by a non-hyperelliptic curve of genus five $ \mathcal{F}_{x_0} $; see Remark~\ref{rem:hrouping_bielliptic_tangents}. Its Prym variety is the principally polarized abelian surface  $\operatorname{Prym}(\mathcal{F}_{x_0}, \rho'_{x_0}) = \operatorname{Jac}(\mathcal{C}')$; see Corollary~\ref{cor:genus_5_prym}.  Proposition~\ref{cor:genus_5_bielliptic} proves that the curves of genus five $\mathcal{F}_{x_0}$ also admit a double cover $\pi'_{x_0}: \mathcal{F}_{x_0} \to \mathcal{E}'_{x_0}$ onto the elliptic curves $\mathcal{E}'_{x_0}$ such that $\mathcal{Z}'= \coprod_{x_0} \mathcal{E}'_{x_0}$ is the Jacobian elliptic fibration~(\ref{XXsurface}) on the Kummer surface $\mathcal{X}'=\operatorname{Kum}(\operatorname{Jac} \mathcal{C}')$. We have the following:
\begin{lemma}
\label{lem:polarizing_divisor}
Assuming Equations~(\ref{eqn:polynomials}) and~(\ref{eqn:moduli_gd_intro2}), the curves of genus five $\mathcal{F}_{x_0}$ admitting the unramified cover $\rho'_{x_0}: \mathcal{F}_{x_0} \to \widetilde{\mathcal{D}}_{x_0}$ form a pencil on $\operatorname{Jac}(\mathcal{C}')$, and $\mathcal{F}_{x_0}$ embeds into  $\operatorname{Prym}{(\mathcal{F}_{x_0}, \rho'_{x_0})} \cong \operatorname{Jac}(\mathcal{C}')$ as a curve of self-intersection eight.
\end{lemma}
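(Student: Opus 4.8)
The plan is to transport the corresponding facts about the pencil $\widetilde{\mathcal{D}}_{x_0}\subset\mathsf{B}$ across the degree-two isogeny $\Phi'\colon\operatorname{Jac}(\mathcal{C}')\to\mathsf{B}$ from diagram~(\ref{d2}), and then to read the self-intersection number off adjunction on the abelian surface $\operatorname{Jac}(\mathcal{C}')$. Throughout I restrict to smooth members, i.e.\ to $x_0$ with $\Delta_\mathcal{Z}\!\mid_{x_0}\neq 0$: for such $x_0$, Corollary~\ref{cor:genus_5_prym} gives that $\mathcal{F}_{x_0}$ is smooth of genus five with $\operatorname{Prym}(\mathcal{F}_{x_0},\rho'_{x_0})=\operatorname{Jac}(\mathcal{C}')$, and this abelian surface is \emph{independent of} $x_0$ because the curve $\mathcal{C}'$ in Equation~(\ref{eqn:normalization}) does not involve $x_0$. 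By Lemma~\ref{lem:APembedding}, each such $\mathcal{F}_{x_0}$ is embedded, via the Abel--Prym map (up to a translation by a $2$-torsion point, after Verra), into this one fixed abelian surface $\operatorname{Jac}(\mathcal{C}')$.

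First I would settle the self-intersection. Since $\operatorname{Jac}(\mathcal{C}')$ is an abelian surface its canonical class vanishes, so the adjunction formula applied to the smooth curve $\mathcal{F}_{x_0}\subset\operatorname{Jac}(\mathcal{C}')$ gives $\mathcal{K}_{\mathcal{F}_{x_0}}\cong\mathcal{O}_{\operatorname{Jac}(\mathcal{C}')}(\mathcal{F}_{x_0})\big|_{\mathcal{F}_{x_0}}$, hence $\mathcal{F}_{x_0}^2=\deg\mathcal{K}_{\mathcal{F}_{x_0}}=2g(\mathcal{F}_{x_0})-2=8$. As a consistency check, the same value comes out of the isogeny picture: under the construction in diagram~(\ref{d2}) the curve $\mathcal{F}_{x_0}$ is the (reduced, connected) $\Phi'$-preimage of $\widetilde{\mathcal{D}}_{x_0}\subset\mathsf{B}$, so $[\mathcal{F}_{x_0}]=\Phi'^*[\widetilde{\mathcal{D}}_{x_0}]$ and the projection formula gives $\mathcal{F}_{x_0}^2=\deg(\Phi')\cdot[\widetilde{\mathcal{D}}_{x_0}]^2=2\cdot\mathscr{V}^2=2\cdot 4=8$, using that $\widetilde{\mathcal{D}}_{x_0}$ lies in the polarization pencil $|\mathscr{V}|$ on $\mathsf{B}$ by Theorem~\ref{main1}(1) together with Barth's Theorem~\ref{thm:Barth}.

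For the pencil statement, recall from Theorem~\ref{main1}(1) and Theorem~\ref{thm:Barth} that the curves $\widetilde{\mathcal{D}}_{x_0}$, as $x_0$ varies, realize the complete pencil $|\mathscr{V}|$ of the $(1,2)$-polarization on $\mathsf{B}$ (so $h^0(\mathsf{B},\mathscr{V})=2$). Pulling back along the $2$-isogeny $\Phi'$ — the one covering the rational double cover $\phi'=\phi_{\Delta_{G'}}$ of $\mathcal{Y}$ by $\mathcal{X}'=\operatorname{Kum}(\operatorname{Jac}\mathcal{C}')$ identified in Proposition~\ref{lem:XX} — the injection $\Phi'^*\colon H^0(\mathsf{B},\mathscr{V})\hookrightarrow H^0(\operatorname{Jac}(\mathcal{C}'),\Phi'^*\mathscr{V})$ exhibits the $\mathcal{F}_{x_0}=\Phi'^*(\widetilde{\mathcal{D}}_{x_0})$ as the members of the linear sub-system $\mathbb{P}\bigl(\Phi'^*H^0(\mathsf{B},\mathscr{V})\bigr)\cong\mathbb{P}^1$ — a pencil — inside $|\Phi'^*\mathscr{V}|$. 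Here $\Phi'^*\mathscr{V}$ is ample with $\chi(\Phi'^*\mathscr{V})=\deg(\Phi')\,\chi(\mathscr{V})=2\cdot 2=4=h^0(\Phi'^*\mathscr{V})$, so $|\Phi'^*\mathscr{V}|\cong\mathbb{P}^3$; it is twice a principal polarization on $\operatorname{Jac}(\mathcal{C}')$, consistent with $(\Phi'^*\mathscr{V})^2=8$ found above.

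The hard part is the identification underlying the previous paragraph: one must check that the explicit genus-five curve $\mathcal{F}_{x_0}\subset\mathbb{P}^4$ of Equation~(\ref{eqn:genus_5_curve}), placed inside $\operatorname{Jac}(\mathcal{C}')$ through the Abel--Prym map of Lemma~\ref{lem:APembedding}, is indeed — up to a translation by a $2$-torsion point, which changes neither the linear-equivalence class nor the self-intersection — the $\Phi'$-preimage of the corresponding member of $|\mathscr{V}|$ on $\mathsf{B}$. This is where one combines Mehran's correspondence between $(1,2)$-isogenies and even-eight rational double covers of Kummer surfaces, the identification $\mathcal{X}'=\operatorname{Kum}(\operatorname{Jac}\mathcal{C}')$ of Proposition~\ref{lem:XX}, and the compatibility of the coverings $\rho'_{x_0}$ with $\Phi'$ used in the proof of Theorem~\ref{main2a}. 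Once this compatibility is in hand, both computations of $\mathcal{F}_{x_0}^2$ agree, the defining sections span the stated pencil, and the lemma follows.
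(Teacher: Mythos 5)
Your argument is correct in its essentials and, at its core, coincides with the paper's (very short) proof: the paper simply invokes Lemma~\ref{lem:APembedding} for the embedding and observes that $\mathcal{F}_{x_0}$ is the pull-back of the polarizing divisor via a degree-two map, so that $\mathcal{F}_{x_0}^2 = 2\cdot\mathscr{V}^2 = 8$ --- which is exactly your ``consistency check''. Your primary derivation of the self-intersection, via adjunction ($K_{\operatorname{Jac}(\mathcal{C}')}=0$, hence $\mathcal{F}_{x_0}^2 = 2g-2 = 8$), is a cleaner and more robust route, since it uses only that the Abel--Prym map is an embedding of a smooth genus-five curve and bypasses the identification of the divisor class. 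Your observation that $\operatorname{Prym}(\mathcal{F}_{x_0},\rho'_{x_0})=\operatorname{Jac}(\mathcal{C}')$ is one fixed surface (because Equation~(\ref{eqn:normalization}) does not involve $x_0$) is precisely what makes ``form a pencil \emph{on} $\operatorname{Jac}(\mathcal{C}')$'' meaningful; the paper leaves this implicit.

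One caution about the pencil statement. You assert that the family is the \emph{linear} sub-system $\mathbb{P}\bigl(\Phi'^{*}H^0(\mathsf{B},\mathscr{V})\bigr)\cong\mathbb{P}^1$ inside $|\Phi'^{*}\mathscr{V}|\cong\mathbb{P}^3$. That would follow if each Abel--Prym image were literally $\Phi'^{-1}(\widetilde{\mathcal{D}}_{x_0})$, or a translate of it by a $2$-torsion point \emph{independent of} $x_0$. But Verra's result, as quoted in the proof of Lemma~\ref{lem:APembedding}, pins the embedding down only up to a translation by a $2$-torsion point that may vary with $x_0$; such a translation preserves the linear equivalence class (every $2$-torsion point lies in $K(\Phi'^{*}\mathscr{V})$ for a $(2,2)$-polarization, as you note) and hence keeps the curve inside $|\Phi'^{*}\mathscr{V}|$ with self-intersection eight, but it can move the curve off the particular line $\Phi'^{*}|\mathscr{V}|$. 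Indeed, the introduction of the paper states that the family traces out a \emph{conic}, not a line, in $|\Phi'^{*}\mathscr{V}|$. So your argument establishes a one-dimensional family in the fixed surface $\operatorname{Jac}(\mathcal{C}')$ together with the self-intersection, which is all the lemma claims, but the stronger assertion of linearity should be dropped. The ``hard part'' you flag --- matching the explicit model~(\ref{eqn:genus_5_curve}) with the isogeny picture --- is handled in the paper by the chain Proposition~\ref{prop:main}, Corollary~\ref{cor:genus_5_prym}, and Lemma~\ref{lem:APembedding}, which you already cite; nothing further is needed there.
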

\begin{proof}
The proof follows from Lemma~\ref{lem:APembedding}. Since $\mathcal{F}_{x_0}$ represents the pull-back of a theta divisor via a degree-two map the self-intersection is eight.
\end{proof}
We make the following:
\begin{remark}
Geometrically, $\widetilde{\mathcal{D}}_{x_0}$ is obtained as follows: given the curve of genus two $\mathcal{C}'$ and its Kummer quartic $\mathrm{K}'=\operatorname{Jac}(\mathcal{C}')/\langle -{\rm id} \rangle \subset \mathbb{P}^3$, we can always find a plane $\mathrm{V}_{x_0} \subset \mathbb{P}^3$ such that $\widetilde{\mathcal{D}}_{x_0} = \mathrm{V}_{x_0} \cap \mathrm{K}'$ is a non-singular quartic curve not meeting the ramification locus of $\pi: \operatorname{Jac}(\mathcal{C}') \to \mathrm{K}'$.  Then, $\mathcal{F}_{x_0}=\pi^{-1}(\mathcal{C}')$ is an unramified double cover of $\mathcal{C}'$ and connected, whence of genus five.  This model of $\widetilde{\mathcal{D}}_{x_0}$ as a plane section of $\mathrm{K}'$ also determines the 28 bitangents of $\widetilde{\mathcal{D}}_{x_0}$. The tropes on $\mathrm{K}'$ cut out sixteen bitangents; the remaining twelve come in pairs from singular conics in $\Gamma$. Our model for $\widetilde{\mathcal{D}}_{x_0}$ in Equation~(\ref{eqn:g3curve_AG}) has only six rational tangents over $\mathfrak{M}'$; see Remark~\ref{rem:dual_Goepel} and \cite{Clingher:2018aa}*{Table~3}. However, there are additional bitangents coming from singular conics which determine $q_1, q_2$ in Equation~(\ref{eqn:normal_form}): they are in general not rational over $\mathfrak{M}'$, but their product always is. In fact, only $\gamma + \delta$ and $\gamma\delta$ in Equation~(\ref{eqn:g5_quadrics}) are rational over $\mathfrak{M}'$; see Equation~(\ref{eqn:moduli_gd_intro2}).
\end{remark}
\par We use the same identification of moduli as in Section~\ref{proof_thm}. In addition, we rescale $V, W \mapsto V/(3\sqrt{l c_2}), W/(3\sqrt{l c_2})$ with $l=\kappa_{1,5} \kappa_{2,3}$. Note that $l=0$ or $c_2=0$ implies that $\mathcal{C}$ is singular. We then introduce the parameters $e=-\gamma/(3l)$ and $f=-\delta/(3l)$ such that Equation~(\ref{eqn:moduli_gd_intro2}) becomes $e+f=c_1/c_2$, $e f=c_0/c_2$ and Equation~(\ref{eqn:genus_5_curve}) becomes Equation~(\ref{eqn:genus_5_curve_intro}).  Interchanging $e$ and $f$ amounts to the changing the sign of $\sqrt{c_1^2-4c_0c_2}$ which is easily checked to correspond to a sign change $\pm \kappa_{1,5}$ or, equivalently, swapping the two sheets of the double cover $\mathfrak{M}' \to \mathfrak{M}$. A computation then shows that the curves of genus two in Proposition~\ref{prop:isog_genus2_curve} and Proposition~\ref{prop:genus_5_prym}/Corollary~\ref{cor:genus_5_prym} coincide. Upon re-scaling of variables we obtain Equation~(\ref{eqn:genus-two-dual_intro}).  The fact that the curve is an Abel-Prym embedding and also  bielliptic was proved in Lemma~\ref{lem:APembedding} and Proposition~\ref{cor:genus_5_bielliptic}; finally, we use Lemma~\ref{lem:polarizing_divisor}. \qed
\subsection{Proof of Corollary~\ref{main2}}
\label{proof3}
Theorem~\ref{main1} already proves that for a smooth curve $\mathcal{D}_{x_0}$ the Prym variety $\operatorname{Prym}(\mathcal{D}_{x_0},\pi_{x_0})$ with its polarization of type $(1,2)$ is 2-isogenous to the principally polarized Jacobian variety $\operatorname{Jac}(\mathcal{C})$. The proof of the corollary then follows from Lemma~\ref{lem:product_decomp} and Proposition~\ref{prop:decomp} after observing that for $x_0=0$ the curve $\mathcal{D}_{x_0}$ is smooth and for its bielliptic quotient $\mathcal{Q}_{x_0} = \mathcal{D}_{x_0} /\langle \jmath \rangle$ the Jacobian $\operatorname{Jac}(\mathcal{Q}_{x_0})$ has the same $j$-invariant as the one in Equation~(\ref{eqn:j-inv}). The same argument applies for $x_0=\infty$. \qed
\bibliographystyle{amsplain}
\bibliography{ref}{}
\end{document}